\newtheorem{theorem}{Theorem}[section]
\newtheorem{lem}{Lemma}[section]
\newtheorem{rem}{Remark}[section]
\newtheorem{proposition}{Proposition}[section]
\newcounter{hypA}
\newcounter{hypB}
\newcounter{hypD}
\date{}
\newcommand{\EE}{\mathbb{E}}
\newcommand{\Pa}{ {\cal P }}
\def \EE{\mathbb{E}}
\DeclarePairedDelimiter\floor{\lfloor}{\rfloor}
\numberwithin{equation}{section}
\begin{document}

\begin{center}

{\Large \textbf{Multilevel Estimation of Normalization Constants \\ using Ensemble Kalman--Bucy Filters}}

\vspace{0.5cm}

HAMZA RUZAYQAT,  NEIL K. CHADA \& AJAY JASRA

{\footnotesize Computer, Electrical and Mathematical Sciences and Engineering Division, \\ King Abdullah University of Science and Technology, Thuwal, 23955, KSA.} \\
{\footnotesize E-Mail:\,} \texttt{{\footnotesize hamza.ruzayqat@kaust.edu.sa, neil.chada@kaust.edu.sa, ajay.jasra@kaust.edu.sa}} \\

\end{center}

\begin{abstract}
{In this article we consider the application of multilevel Monte Carlo, for the estimation of normalizing constants. In particular we will make use of the filtering algorithm, the ensemble Kalman--Bucy filter (EnKBF), which is an $N$-particle representation of the Kalman--Bucy filter (KBF). The EnKBF is of interest as it coincides with the optimal filter in the continuous-linear setting, i.e. the KBF. This motivates our particular setup in the linear setting. The resulting methodology we will use is the multilevel ensemble Kalman--Bucy filter (MLEnKBF). We provide an analysis based on deriving $\mathbb{L}_q$-bounds for the normalizing constants using both the single-level, and the  multilevel algorithms, which is largely based on previous work deriving the MLEnKBF \cite{CJY20}. Our results will be highlighted through numerical results, where we firstly demonstrate the error-to-cost rates of the MLEnKBFs comparing it to the EnKBF on a linear Gaussian model. Our analysis will be specific to one variant of the MLEnKBF, whereas the numerics will be tested on different variants. We also exploit this methodology for parameter estimation, where we test this on the models arising in atmospheric sciences, such as the stochastic Lorenz 63 and 96 model.  }
 \\\\
\noindent\textbf{Keywords}: {Multilevel Monte Carlo, Filtering, Kalman--Bucy Filter, Normalizing Constant, \\ Parameter estimation} \\
\textbf{AMS subject classifications:} {60G35, 62F15, 65C05, 62M20}
\end{abstract}

\section{Introduction}
\label{sec:intro}
Filtering \cite{BC09,CR11,PD04} is the mathematical discipline concerned with the conditional probability of an unobserved latent process, given sequentially observed data.  It can be found in a wide array of applications, most notably; numerical weather prediction, mathematical finance, geophysical sciences and more recently machine learning \cite{RB10,MW06,ORL08}. Mathematically, given a $d_x$-dimensional unobserved signal process $\{X_t\}_{t \geq 0}$, and a $d_y$-dimensional $\{Y_t\}_{t \geq 0}$ observed process, defined as
\begin{align}
\label{eq:dat}
dY_t &=  h(X_t) dt +  dV_t,   \\
\label{eq:sig}
dX_t &=   f(X_t) dt +  \sigma(X_t) dW_t, 
\end{align}
the aim of the filtering problem is to compute the following expectation $\mathbb{E}[\varphi(X_t)|\mathscr{F}_t]$, where $\varphi:\mathbb{R}^{d_x} \rightarrow \mathbb{R}$ is an appropriately integrable function and $\{\mathscr{F}_t\}_{t \geq 0}$ is the filtration generated by the observed process \eqref{eq:dat}. From \eqref{eq:dat} - \eqref{eq:sig}, $V_t$ and $W_t$ are independent $d_y$ and $d_x-$dimensional Brownian motions respectively, with $h:\mathbb{R}^{d_x} \rightarrow \mathbb{R}^{d_y}$ and $f:\mathbb{R}^{d_x} \rightarrow \mathbb{R}^{d_x}$ denoting potentially nonlinear functions, and $\sigma: \mathbb{R}^{d_x} \rightarrow  \mathbb{R}^{d_x \times d_x}$ acting as a diffusion coefficient. Aside from computing the filtering distribution, filtering can also be exploited to compute  normalizing constants associated with the filtering distribution \cite{BDM18,CDG11,GM98,KW17,RJP18}, i.e. the marginal likelihood, which is an important and useful computation in Bayesian statistics. It is useful as it can be used for model comparison which is commonly done through Bayes' factor. These quantities are particularly useful in mixture models such as time-series state space models, and hierarchical models.  Our motivation from this work is the estimation of the normalizing constants, associated with the filtering distribution in the continuous linear-Gaussian setting. It is well-known in this setting, that the optimal filter is the Kalman--Bucy filter (KBF) \cite{AJ70}. However, the use of such a filter can be challenging to work with, such as firstly actually simulating such processes, or secondly the associated computational cost. As a result an alternative to this filter is the ensemble Kalman--Bucy filter (EnKBF), which is an $N$-particle representation of the KBF, or also as approximations of conditional McKean--Vlasov-type diffusion processes. This filter has been analyzed extensively of recent through various pieces of work which include, but not limited to, understanding stability,  deriving uniform propagation of chaos bounds \cite{BD20,BD17,DT18} and acquiring a multilevel estimator \cite{CJY20,MBG08,MBG15}. However in the context of estimating normalizing constants, recent work has been done by Crisan et al. \cite{CDJ21} where the authors provide a way to do so using the EnKBF. However despite this, there is a computational burden associated with such costs, for example with the EnKBF or other Monte-Carlo algorithms, to attain a mean squared error (MSE) of a particular order. Therefore it would be of interest to apply techniques, to reduce the cost associated of attaining a particular MSE. This is the incentive behind the group of methods based on multilevel Monte Carlo (MLMC).

{MLMC is a numerical technique for stochastic computation, concerned with reducing the computational complexity of Monte Carlo. Specifically it reduces the cost of attaining an MSE of an order $\mathcal{O}(\epsilon^2)$, for $\epsilon>0$.  The ideas of MLMC go back to the original work of Giles and Hendrick \cite{MBG08,MBG15}, which uses mesh refinements, and a telescoping sum property, to reduce the cost. Since then it has been applied to numerous applications, and disciplines, with filtering being one of them. Specifically for Kalman-based filtering this has been applied to the discrete-EnKF, with particular variants also analyzed, and applications arising in reservoir modeling \cite{CHL20,FMS20,HLT16,HST21}. However more recently there has been the extension to the continuous-time setting, for the EnKBF \cite{CJY20}, entitled MLEnKBF. 

In this article we are interested in developing multilevel estimators related to Kalman filtering, for the computation of normalizing constant. In particular we will propose the use of various MLEnKBFs for the application of normalizing constant estimation. Our motivation for this, as mentioned, is that firstly applying ML techniques can reduce the cost, associated to attaining an MSE of a particular oder, compared to the normalizing constant estimator in \cite{CDJ21}. Due to this we will make use of the methodology and proof arguments discussed in \cite{CJY20}. Secondly as it coincides with KBF, it provides an incentive to work in the linear setting, which can be more cost-effective in terms of the cost of the algorithm, to other filtering techniques, which make use of the MLMC, such as the multilevel particle filter and sequential Monte Carlo sampler \cite{DJLZ17,JKL17,JKO18}. We emphasize with this paper, that we are not focused on such a comparison.}

\subsection{Contributions}
Our contributions of this manuscript are highlighted through the following points:
\begin{itemize}
\item We introduce and approximate a multilevel estimator for the computation of normalizing constants associated with EnKBFs. This formulation is based on the multilevel EnKBF introduced by Chada et al. \cite{CJY20}, which looks to extend the single-level estimator which was proposed by Crisan et al. \cite{CDJ21}.
\item Through our formulation we provide firstly a propagation of chaos result for the normalizing constants associated with the EnKBF, which requires appropriate $\mathbb{L}_q-$bounds, for $q \in [1,\infty)$. To achieve a MSE of order $\mathcal{O}(\epsilon^2)$, we require a cost of $\mathcal{O}(\epsilon^{-3})$, for $\epsilon>0$.  This is then extended to the multilevel setting, where we prove a similar result for an `ideal', or i.i.d., system of the MLEnKBF. In particular to achieve a MSE of order $\mathcal{O}(\epsilon^2)$, we require a cost of $\mathcal{O}(\epsilon^{-2}|\log(\epsilon)|)$, for $\epsilon>0$. The analysis is specific to the {vanilla} variant, which  will form our main result of this work and naturally follows from the analysis conducted in \cite{CJY20}.
\item We verify the analysis derived in the paper for various numerical experiments. We firstly demonstrate the rates attained on an Ornstein--Uhlenbeck process, and provide parameter estimation on the stochastic Lorenz 63 and Lorenz 96 models. Furthermore we test other variants of the EnKBF, where the analysis does not directly apply, such as the {deterministic} and {deterministic-transport} variants, to see how they perform.
\end{itemize}

\subsection{Outline}
The outline of this paper is presented as follows. In \autoref{sec:model} we review and discuss the KBF which will motivate the introduction of the EnKBF and the three variants we consider. This will lead onto \autoref{sec:NC} where we discuss normalizing constants associated to the filtering distribution, and how one can use the EnKBF and the MLEnKBF. Furthermore in this section we provide our main result and demonstrate it on a toy linear example. In \autoref{sec:param_estim} we provide an algorithm for parameter estimation using the MLEnKBF tested on various models arising in atmospheric sciences. We conclude with some remarks and suggest future directions of work in the \autoref{sec:conc}. Finally the proofs of our results are presented in the Appendix.

\section{Model and Background}
\label{sec:model}
In this section we provide an overview of the necessary background material required for the rest of the article. We begin by providing the continuous-time filtering problem through the Kalman--Bucy filters (KBF). This will lead onto a discussion of ensemble Kalman--Bucy filters (EnKBFs), which are $N-$particle representation of the KBF. Finally we will discuss the concept of multilevel Monte Carlo (MLMC), and review its extension to the EnKBF, referred to as the MLEnKBF. 
\subsection{Kalman--Bucy Filters}

Consider a linear-Gaussian filtering model of the following form
\begin{align}
\label{eq:data}
dY_t & =  CX_t dt + R^{1/2} dV_t, \\
\label{eq:signal}
dX_t & =  A X_t dt + Q^{1/2} dW_t,
\end{align}
where $(Y_t,X_t)\in\mathbb{R}^{d_y}\times\mathbb{R}^{d_x}$, $(V_t,W_t)$ is a $(d_y+d_x)-$dimensional standard Brownian motion, 
$A$ is a square $d_x\times d_x$ matrix, $C$ is a $d_y\times d_x$ matrix, $Y_0=0$, $X_0\sim\mathcal{N}_{d_x}(\mathcal{M}_0,\mathcal{P}_0)$
($d_x-$dimensional Gaussian distribution, mean $\mathcal{M}_0$, covariance matrix $\mathcal{P}_0$)
and $R^{1/2},Q^{1/2}$ are square (of the appropriate dimension) and symmetric and invertible matrices. It is well-known that, letting
$\{\mathscr{F}_t\}_{t\geq 0}$ be the filtration generated by the observations, the conditional probability of $X_t$ given $\mathscr{F}_t$ is a
Gaussian distribution with mean and covariance matrix 
$$
\mathcal{M}_t := \mathbb{E}[X_t|\mathscr{F}_t], \quad \mathcal{P}_t := \mathbb{E}\Big([X_t - \mathbb{E}(X_t|\mathscr{F}_t)][X_t - \mathbb{E}(X_t|\mathscr{F}_t)]^{\top}\Big),
$$
given by the Kalman--Bucy and Ricatti equations \cite{DT18}
\begin{align}
\label{eq:kbf}
d\mathcal{M}_t &= A \mathcal{M}_t dt + \mathcal{P}_tC^{\top}R^{-1}\Big(dY_t - C\mathcal{M}_t dt\Big), \\
\label{eq:ricc}
\partial_t\mathcal{P}_t &= \textrm{Ricc}(\mathcal{P}_t),
\end{align}
where the Riccati drift term is defined as
$$
\textrm{Ricc}(G) = AG + GA^{\top}-GSG + Q, \quad \textrm{with} \ S:=C^{\top}R^{-1}C.
$$
 A derivation of \eqref{eq:kbf} - \eqref{eq:ricc} can be found in \cite{AJ70}. KBF is viewed as the $\mathbb{L}_2$-optimal state estimator for an Ornstein--Uhlenbeck process, given the state is partially observed with linear and Gaussian assumptions. An alternative approach is to consider a conditional McKean-Vlasov type diffusion process. For this article we work with three different processes of the form
\begin{eqnarray}
d\overline{X}_t & = &A~\overline{X}_t~dt~+~Q^{1/2}~d\overline{W}_t+\Pa_{\eta_t}C^{\top}R^{-1}~\left[dY_t-\left(C\overline{X}_tdt+R^{1/2}~d\overline{V}_{t}\right)\right],\label{eq:vanilla_kbf}\\
d\overline{X}_t & = &A~\overline{X}_t~dt~+~Q^{1/2}~d\overline{W}_t+\Pa_{\eta_t}C^{\top}R^{-1}~\left[dY_t-\left(\frac{1}{2}C\left[\overline{X}_t+\eta_t(e)\right]dt\right)\right],
\label{eq:determ_kbf}\\
d\overline{X}_t & = &A~\overline{X}_t~dt~+~Q\Pa_{\eta_t}^{-1}\left(\overline{X}_t-\eta_t(e)\right)~dt+\Pa_{\eta_t}C^{\top}R^{-1}~\left[dY_t-\left(\frac{1}{2}C\left[\overline{X}_t+\eta_t(e)\right]dt\right)\right],
\label{eq:determ_transp_kbf}
\end{eqnarray}

where $(\overline{V}_t,\overline{W}_t,\overline{X}_0)$ are copies of the process of $(V_t,W_t,X_0)$ and covariance
$$
\mathcal{P}_{\eta_t} = \eta_t\Big([e-\eta_t(e)][e-\eta_t(e)]^{\top}\Big), \quad \eta_t:= \mathrm{Law}(\overline{X}_t|\mathscr{F}_t),
$$
such that $\eta_t$ is the conditional law of $\overline{X}_t$ given $\mathscr{F}_t$ and $e(x)=x$. We will explain the difference of each diffusion process, in succeeding subsections. It is important to note that the nonlinearity in \eqref{eq:vanilla_kbf}-\eqref{eq:determ_transp_kbf} does not depend on the distribution of the state $\mathrm{Law}(\overline{X}_t)$ but on the conditional distribution $\eta_t$, and $\mathcal{P}_{\eta_t}$ alone does not depend on $\mathscr{F}_t$. These processes are
commonly referred to as Kalman-Bucy (nonlinear) diffusion processes.
It is known that the conditional  expectations of the random states $\overline{X}_t$ and their conditional covariance matrices $\mathcal{P}_{\eta_t}$, with respect to $\mathscr{F}_t$, satisfy the Kalman--Bucy and the Riccati equations. In addition, for any $t\in\mathbb{R}^+$
$$
\eta_t:= \mathrm{Law}(\overline{X}_t|\mathscr{F}_t) = \mathrm{Law}({X}_t|\mathscr{F}_t).
$$
As a result, an alternative to recursively computing \eqref{eq:kbf} - \eqref{eq:ricc}, is to generate $N$ i.i.d.~samples from any of
\eqref{eq:vanilla_kbf}-\eqref{eq:determ_transp_kbf} processes and apply a Monte Carlo approximation, as mentioned which we now discuss.

\subsection{Ensemble Kalman--Bucy Filters}

Exact simulation from \eqref{eq:vanilla_kbf}-\eqref{eq:determ_transp_kbf} is typically not possible, or feasible, as one cannot compute $\mathcal{P}_{\eta_t}$ exactly. 
The ensemble Kalman--Bucy filter (EnKBF) can be used to deal with this issue. The EnKBF coincides with the mean-field particle interpretation of the Kalman-Bucy diffusion processes. EnKBFs is an $N-$particle system simulated for the $i^{th}-$particle, $i\in\{1,\dots,N\}$. The first variant of the EnKBF we consider is given as
\begin{equation}
d\xi_t^i  = A~\xi_t^i~dt~+~Q^{1/2}~d\overline{W}^i_t+P^N_tC^{\top}R^{-1}~\left[dY_t-\left(C\xi_t^idt+R^{1/2}~d\overline{V}^i_{t}\right)\right],
\label{eq:enkbf}
\end{equation}
which is known as the vanilla EnKBF (VEnKBF). This is the standard EnKBF used in theory and practice which contains perturbed observations, through the brownian motion $\overline{V}_t$.
It is a continuous-time derivation of the EnKF \cite{GE09}.
The second EnKBF given, defined as,
\begin{equation}
d\xi_t^i  = A~\xi_t^i~dt~+~Q^{1/2}~d\overline{W}^i_t+P^N_tC^{\top}R^{-1}~\left[dY_t-\left(\frac{1}{2}C\left[\xi_t^i+ \mathcal{M}_t \right]dt\right)\right],
\label{eq:denkbf}
\end{equation}
which is referred to as the deterministic EnKBF (DEnKBF), where unlike \eqref{eq:enkbf}, it contains no perturbed observations. This is the continuous-limiting object
of the deterministic EnKF defined in \cite{SO08}. It is well-known in ensemble data assimilation, that deterministic filters can perform better, partially due to the containing exact
observations with no noise. Our final variant of the EnKBF is the
deterministic transport EnKBF (DTEnKBF),
\begin{equation}
d\xi_t^i  = A~\xi_t^i~dt~+~Q P_t^{-1}\left(\xi_t^i-\mathcal{M}_t\right)~dt+P^N_t C^{\top}R^{-1}~\left[dY_t-\left(\frac{1}{2}C\left[\xi_t^i+\mathcal{M}_t\right]dt\right)\right],
\label{eq:dtenkbf}
\end{equation}
where the modification is that it does not contain  $\overline{V}_t$ and $\overline{W}_t$, implying it is completely deterministic. This filter, is motivated from the use of optimal transport
methodologies within data assimilation \cite{SR19}. Such processes are discussed in further detail in \cite{BD20}.
For all the variants of the EnKBF  the sample mean and covariances are defined as
\begin{align*}
{P}_t^N & =  \frac{1}{N -1}\sum_{i=1}^N (\xi_t^i-m^N_t)(\xi_t^i-m^N_t)^{\top},\\
m^N_t & =  \frac{1}{N }\sum_{i=1}^N \xi_t^i,
\end{align*}
where $\xi_0^i\stackrel{\textrm{i.i.d.}}{\sim}\mathcal{N}_{d_x}(\mathcal{M}_0,\mathcal{P}_0)$. Note that when $C=0$, \eqref{eq:enkbf} and \eqref{eq:denkbf} reduce to $N$--independent copies of an Ornstein--Uhlenbeck process. 


In practice, one will not have access to an entire trajectory of observations. Thus numerically, one often works with a time discretization, such as an Euler-type discretization.
Let $\Delta_l=2^{-l}$ denote our level of discretization, then we will generate the system for $(i,k)\in\{1,\dots,N\}\times\mathbb{N}_0=\mathbb{N}\cup\{0\}$ as

\begin{rem}
As mentioned in the introduction, the function $f$ in \eqref{eq:sig} can be nonlinear. If we assume in addition that $h$ is the identity function, then one can modify equations \eqref{eq:enkbf}-- \eqref{eq:dtenkbf} by replacing the term $A~\overline{X}_t$ with $f(\overline{X}_t)$ and equations \eqref{eq:enkbf1}-- \eqref{eq:dtenkbf1} by replacing the term $A~\xi_{k\Delta_l}^i$ with $f(\xi_{k\Delta_l}^i)$.
\end{rem}

\begin{rem}
When mentioning perturbed observations, we are referring to the additional noise in the innovation term. However, we emphasize it is not the observation being perturbed, but
rather the projected state. hence once can think of this as a stochastic filter.
\end{rem}
\begin{eqnarray}
   \xi_{(k+1)\Delta_l}^i & = & \xi_{k\Delta_l}^i + A  \xi_{k\Delta_l}^i  \Delta_l + Q^{1/2} \big\{\overline{W}_{(k+1)\Delta_l}^i-\overline{W}_{k\Delta_l}^i \big\} + \nonumber \\
   \label{eq:enkbf1}
 & & P^N_{k\Delta_l} C^{\top} R^{-1} \Big( \big\{Y_{(k+1)\Delta_l} - Y_{k\Delta_l} \big\} - \Big[ C \xi_{k\Delta_l}^i  \Delta_l + R^{1/2} \big \{\overline{V}_{(k+1)\Delta_l}^i - \overline{V}_{k\Delta_l}^i \big\} \Big] \Big),\\
 \xi_{(k+1)\Delta_l}^i & =&  \xi_{k\Delta_l}^i + A  \xi_{k\Delta_l}^i  \Delta_l + Q^{1/2} \big\{\overline{W}_{(k+1)\Delta_l}^i-\overline{W}_{k\Delta_l}^i \big\} + \nonumber \\
    \label{eq:denkbf1}
 & & P^N_{k\Delta_l} C^{\top} R^{-1} \left( \big\{Y_{(k+1)\Delta_l} - Y_{k\Delta_l} \big\} - C \left(\dfrac{ \xi_{k\Delta_l}^i + m^N_{k\Delta_l}}{2} \right)\Delta_l \right),\\
 \xi_{(k+1)\Delta_l}^i & = & \xi_{k\Delta_l}^i + A  \xi_{k\Delta_l}^i  \Delta_l + Q\left(P_{k\Delta_l}\right)^{-1} \left(\xi_{k\Delta_l}^i - m^N_{k\Delta_l}  \right) \Delta_l + \nonumber\\
     \label{eq:dtenkbf1}
 & & P^N_{k\Delta_l} C^{\top} R^{-1} \left( \big\{Y_{(k+1)\Delta_l} - Y_{k\Delta_l} \big\} - C \left(\dfrac{ \xi_{k\Delta_l}^i + m^N_{k\Delta_l}}{2} \right)\Delta_l \right),
\end{eqnarray}


such that
\begin{align*}
P_{k\Delta_l}^N & =  \frac{1}{N -1}\sum_{i=1}^N (\xi_{k\Delta_l}^i-m^N_{k\Delta_l})(\xi_{k\Delta_l}^i-m_{k\Delta_l}^N)^{\top}, \\
m_{k\Delta_l}^N & =  \frac{1}{N }\sum_{i=1}^N \xi_{k\Delta_l}^i,
\end{align*}
and $\xi_0^i\stackrel{\textrm{i.i.d.}}{\sim}\mathcal{N}_{d_x}(\mathcal{M}_0,\mathcal{P}_0)$. For $l\in\mathbb{N}_0$ given, denote by $\eta_t^{N,l}$ as the $N-$empirical
probability measure of the particles $(\xi_t^1,\dots,\xi_t^N)$, where $t\in\{0,\Delta_l,2\Delta_l,\dots\}$. For $\varphi:\mathbb{R}^{d_x}\rightarrow\mathbb{R}^{d_x}$
we will use the notation $\eta_t^{N,l}(\varphi):=\tfrac{1}{N}\sum_{i=1}^N\varphi(\xi_t^{i})$.

\begin{rem}
\label{rem:iid}
For ensemble based data assimilation, a common recurring assumption is that the particles are i.i.d.. While this is true, through ensemble filtering methodologies the assumption is practically false. This is based on the fact that ensemble covariance is computed from all ensemble members, which introduces a dependence, and in some cases the EnKF can be applied to nonlinear functions, ensuring the ensemble is not normally distributed. 
\end{rem}

\subsection{Multilevel EnKBFs}

Let us define $\pi$ to be be a probability on a measurable space $(\mathsf{X},\mathscr{X})$ and for $\pi-$integrable $\varphi:\mathsf{X}\rightarrow\mathbb{R}$
consider the problem of estimating $\pi(\varphi)=\mathbb{E}_{\pi}[\varphi(X)]$. Now let us assume that we only have access to a sequence of approximations of $\pi$,  $\{\pi_l\}_{l\in\mathbb{N}_0}$, also each defined on $(\mathsf{X},\mathscr{X})$ and we are now interested in estimating $\pi_l(\varphi)$, such that  $\lim_{l\rightarrow\infty}|[\pi_l-\pi](\varphi)|=0$. Therefore one can use the telescoping sum
\begin{equation}
\label{eq:sum}
\pi_L(\varphi) = \pi_0(\varphi) + \sum^L_{l=1}[\pi_l-\pi_{l-1}](\varphi),
\end{equation}
as we know that the approximation error between $\pi$ and $\pi_l$ gets smaller as $l \rightarrow \infty$. The idea of multilevel Monte Carlo (MLMC) \cite{MBG08,MBG15} is to construct a coupled system, related to the telescoping sum, such that the mean squared error can be reduced,  relative to i.i.d.~sampling from $\pi_L$. Therefore our MLMC approximation of $\mathbb{E}_{\pi_L}[\varphi(X)]$ is 
$$
\pi_L^{ML}(\varphi) := \frac{1}{N_0}\sum_{i=1}^{N_0}\varphi(X^{i,0}) + \sum_{l=1}^L \frac{1}{N_l}\sum_{i=1}^{N_l}\{\varphi(X^{i,l})-\varphi(\tilde{X}^{i,l-1})\},
$$
where $N_0\in\mathbb{N}$ i.i.d.~samples from $\pi_0$ as $(X^{1,0},\dots,X^{N_0,0})$ and for $l\in\{1,\dots,L\}$, $N_l\in\mathbb{N}$ samples
from a coupling of $(\pi_l,\pi_{l-1})$ as $((X^{1,l},\tilde{X}^{1,l-1}),\dots,(X^{N_l,l},\tilde{X}^{N_l,l-1}))$.
Then the MSE is then
$$
\mathbb{E}[(\pi_L^{ML}(\varphi)-\pi(\varphi))^2] = \underbrace{\mathbb{V}\textrm{ar}[\pi_L^{ML}(\varphi)]}_{\textrm{variance}} + 
[\underbrace{\pi_L(\varphi)-\pi(\varphi)}_{\textrm{bias}}]^2.
$$
It is important to note that the telescoping sum \eqref{eq:sum}, despite its simplicity and usefulness, is not an optimal choice to construct the multilevel estimator. In particular Giles \cite{MBG15} makes this assumption, but states that a potentially more robust weighted sum would be more optimal.
The work of Chada et al. \cite{CJY20} proposed the application of MLMC for the VEnKBF, where we will now briefly review this. If we consider the discretized VEnKBF \eqref{eq:enkbf1}, then, for  $l\in\mathbb{N}$ and $(i,k)\in\{1,\dots,N\}\times\mathbb{N}_0$ the ML adaption is given as

\[
\textbf{(F1)}
\begin{cases}
\begin{aligned}
\xi_{(k+1)\Delta_l}^{i,l} & = \xi_{k\Delta_l}^{i,l} + A\xi_{k\Delta_l}^{i,l}\Delta_l + Q^{1/2} [\overline{W}_{(k+1)\Delta_l}^i-\overline{W}_{k\Delta_l}^i]  \\
&+ P_{k\Delta_l}^{N,l}C^{\top}R^{-1}\Big([Y^{{i}}_{(k+1)\Delta_l}-Y^{{i}}_{k\Delta_l}]-\Big[C\xi_{k\Delta_l}^{i,l}\Delta_l + R^{1/2}[\overline{V}_{(k+1)\Delta_l}^i-\overline{V}_{k\Delta_l}^i]\Big]\Big),  \\
\xi_{(k+1)\Delta_{l-1}}^{i,l-1} & =  \xi_{k\Delta_{l-1}}^{i,l-1} + A\xi_{k\Delta_{l-1}}^{i,l-1}\Delta_{l-1} + Q^{1/2} [\overline{W}_{(k+1)\Delta_{l-1}}^i-\overline{W}_{k\Delta_{l-1}}^i] \\
 &+ P_{k\Delta_{l-1}}^{N,l-1}C^{\top}R^{-1}\Big([Y^{{i}}_{(k+1)\Delta_{l-1}}-Y^{{i}}_{k\Delta_{l-1}}] -\Big[C\xi_{k\Delta_{l-1}}^{i,l-1}\Delta_{l-1}+ R^{1/2}[\overline{V}_{(k+1)\Delta_{l-1}}^i-\overline{V}_{k\Delta_{l-1}}^i]\Big]\Big).
\end{aligned}
\end{cases}
\] \\
 Similarly for the deterministic variant \eqref{eq:denkbf1} we have
\[
\textbf{(F2)}
\begin{cases}
\begin{aligned} 
\xi_{(k+1)\Delta_l}^{i,l} & = \xi_{k\Delta_l}^{i,l} + A\xi_{k\Delta_l}^{i,l}\Delta_l + Q^{1/2} [\overline{W}_{(k+1)\Delta_l}^i-\overline{W}_{k\Delta_l}^i] \\
&+ P_{k\Delta_l}^{N,l}C^{\top}R^{-1}\left([Y^{{i}}_{(k+1)\Delta_l}-Y^{{i}}_{k\Delta_l}]-C \left(\dfrac{ \xi_{k\Delta_l}^{i,1} + m^{N,l}_{k\Delta_l}}{2} \right)\Delta_l \right),  \\
\xi_{(k+1)\Delta_{l-1}}^{i,l-1} & =  \xi_{k\Delta_{l-1}}^{i,l-1} + A\xi_{k\Delta_{l-1}}^{i,l-1}\Delta_{l-1} + Q^{1/2} [\overline{W}_{(k+1)\Delta_{l-1}}^i-\overline{W}_{k\Delta_{l-1}}^i] \\
 &+ P_{k\Delta_{l-1}}^{N,l-1}C^{\top}R^{-1}\left([Y^{{i}}_{(k+1)\Delta_{l-1}}-Y^{{i}}_{k\Delta_{l-1}}] -C \left(\dfrac{ \xi_{k\Delta_{l-1}}^{i,l-1} + m^{N,l-1}_{k\Delta_{l-1}}}{2} \right)\Delta_{l-1} \right),
\end{aligned}
\end{cases}
\] \\
and finally for the  deterministic-transport variant \eqref{eq:dtenkbf1}
\[
\textbf{(F3)}
\begin{cases}
\begin{aligned}
  \xi_{(k+1)\Delta_l}^{i,l} & =  \xi_{k\Delta_l}^{i,l} + A  \xi_{k\Delta_l}^{i,l}  \Delta_{l} + Q \left(P_{k\Delta_l}\right)^{-1} [\xi_{k\Delta_l}^{i,l} - m_{k\Delta_l}^{N,l} ] \Delta_l  \\
&+ P_{k\Delta_l}^{N,l}C^{\top}R^{-1}\left([Y^{{i}}_{(k+1)\Delta_l}-Y^{{i}}_{k\Delta_l}]-C \left(\dfrac{ \xi_{k\Delta_l}^{i,l} + m^{N,l}_{k\Delta_l}}{2} \right)\Delta_l \right),  \\
\xi_{(k+1)\Delta_{l-1}}^{i,l-1} & =  \xi_{k\Delta_{l-1}}^{i,l-1} + A\xi_{k\Delta_{l-1}}^{i,l-1}\Delta_{l-1} + Q\left(P_{k\Delta_{l-1}}\right)^{-1} [\xi_{k\Delta_{l-1}}^{i,l-1}-m_{k\Delta_{l-1}}^{N,l-1}] \Delta_{l-1} \\
  &+ P_{k\Delta_{l-1}}^{N,l-1}C^{\top}R^{-1}\left([Y^{{i}}_{(k+1)\Delta_{l-1}}-Y^{{i}}_{k\Delta_{l-1}}] -C \left(\dfrac{ \xi_{k\Delta_{l-1}}^{i-1} + m_{k\Delta_{l-1}}}{2} \right)\Delta_{l-1} \right),
\end{aligned}
\end{cases}
\] \\
where our sample covariances and means are defined accordingly as
\begin{align*}
P_{k\Delta_l}^{N,l} & =  \frac{1}{N -1}\sum_{i=1}^N (\xi_{k\Delta_l}^{i,l}-m_{k\Delta_l}^{N,l})(\xi_{k\Delta_l}^{i,l}-m_{k\Delta_l}^{N,l})^{\top} ,\\
m_{k\Delta_l}^{N,l} & =  \frac{1}{N }\sum_{i=1}^N \xi_{k\Delta_l}^{i,l},\\
P_{k\Delta_{l-1}}^{N,l-1} & =  \frac{1}{N -1}\sum_{i=1}^N (\xi_{k\Delta_{l-1}}^{i,l-1}-m_{k\Delta_{l-1}}^{N,l-1})(\xi_{k\Delta_{l-1}}^{i,l-1}-m_{k\Delta_{l-1}}^{N,l-1})^{\top}, \\
m_{k\Delta_{l-1}}^{N,l-1} & =  \frac{1}{N }\sum_{i=1}^N \xi_{k\Delta_{l-1}}^{i,l-1},
\end{align*}
and $\xi_0^{i,l}\stackrel{\textrm{i.i.d.}}{\sim}\mathcal{N}_{d_x}(\mathcal{M}_0,\mathcal{P}_0)$, $\xi_{0}^{i,l-1}=\xi_0^{i,l}$.
Then, one has the approximation of $[\eta_t^l-\eta_t^{l-1}](\varphi)$, $t\in\mathbb{N}_0$, $\varphi:\mathbb{R}^{d_x}\rightarrow\mathbb{R}$, given as
$$
[\eta_t^{N,l} -\eta_t^{N,l-1}](\varphi) = \frac{1}{N}\sum_{i=1}^N[\varphi(\xi_t^{i,l})-\varphi(\xi_t^{i,l-1})].
$$
Therefore for the multilevel estimation, one has an approximation for $t\in\mathbb{N}_0$
\begin{equation}\label{eq:main_est}
\eta_t^{ML}(\varphi):=\eta_t^{N_0,0}(\varphi) + \sum_{l-1}^L [\eta_t^{N_l,l} -\eta_t^{N_l,l-1}](\varphi).
\end{equation}
Similarly with \autoref{rem:iid}, the i.i.d. assumption regarding each particle does not also hold in the multilevel setting. Largely
due to the correlation of the particle in the multilevel setting.
\begin{rem}
For the above forms of the MLEnKBF, only (\textbf{F1}) - (\textbf{F2}) were introduced and numerically tested in \cite{CJY20}. 
Therefore this article aims to further test the deterministic-transport variant of \textbf{(F3)}.
\end{rem}

\section{Normalizing Constant Estimation}
\label{sec:NC}

In this section we introduce the notion of normalizing constants (NC), which is what we are concerned with estimating. We begin by recalling the NC estimator using the EnKBF, aiming to show its cost associated to attain an MSE of a particular order. We then present our NC estimator through the MLEnKBF. This will lead onto our main result of the paper, which is the error-to-cost ratio of our normalizing constant estimator, i.e. the MLEnKBF, which is presented through \autoref{theo:main}.

\subsection{Normalizing Constants}

We begin by defining the normalizing constant associated with the filtering distribution. This is defined as $Z_t:=\frac{{\mathcal L}_{X_{0:t},Y_{0:t}}}{{\mathcal L}_{X_{0:t},W_{0:t}}}$ to be the density of ${\mathcal L}_{X_{0:t},Y_{0:t}}$, the law of the process $(X,Y)$ and that of ${\mathcal L}_{X_{0:t},W_{0:t}}$, the law of the process $(X,W)$. That is, 
\[
{\mathbb E}[f(X_{0:t})g(Y_{0:t})]= {\mathbb E}[f(X_{0:t})g(W_{0:t})Z_{t}(X,Y)].
\]
One can show that (see Exercise 3.14 pp 56 in \cite{BC09})
\begin{equation}
\label{eq:initt}
Z_t(X,Y)=\exp{\left[\int_0^t \left[\langle CX_s, R^{-1} dY_s\rangle -\frac{1}{2}\langle X_s,SX_s\rangle ~ds\right]\right]},
\end{equation}
recalling that  $S:=C^{\top}R^{-1}C$. Now we let $\overline{Z}_t(Y)$ denote the likelihood function defined by
$$
\overline{Z}_t(Y):=\EE_Y\left(Z_t(X,Y)\right),
$$
where $\EE_Y\left(\cdot\right)$ stands for the expectation w.r.t. the signal process when the observation is fixed and independent of the signal.
From the work of Crisan et al. \cite{CDJ21}, the authors show that the normalizing constant is given by
\begin{align}
\label{eq:nc}
\overline{Z}_t(Y)=\exp{\left[\int_0^t 
\left[~\langle C\mathcal{M}_s, R^{-1} dY_s\rangle -\frac{1}{2}\langle \mathcal{M}_s,S\mathcal{M}_s\rangle~ds\right]
\right]},
\end{align}
and a sensible estimator of it is given as
\begin{align}
\label{eq:nc_estimate}
\overline{Z}_t^N(Y)=\exp{\left[\int_0^t 
\left[~\langle Cm_s^N, R^{-1} dY_s\rangle -\frac{1}{2}\langle m_s^N,Sm_s^N\rangle~ds\right]
\right]},
\end{align}
which follows from replacing the conditional mean of the signal process  with the sample mean associated with the EnKBF. To briefly describe how
\eqref{eq:nc} was attained in \cite{CDJ21}, the authors used a change of measure rule, through Girsanov's Theorem, then an application of Bayes'
Theorem which can be expressed through as the Kallianpur-Striebel formula defined on path space. The derivation is a standard approach, which 
is discussed in  Chapter 3 in \cite{BC09}.
\\\\
In practice, one must time-discretize the EnKBF, and the normalizing constant estimator \eqref{eq:nc_estimate} to yield for $t\in\mathbb{N}$ 
\begin{align}
\label{eq:disc_nc_estimate}
\overline{Z}_t^{N,l}(Y)=\exp\Bigg\{\sum_{k=0}^{t\Delta_l^{-1}-1} 
\left[~\langle Cm_{k\Delta_l}^N, R^{-1} [Y_{(k+1)\Delta_l}-Y_{k\Delta_l}]\rangle -\frac{\Delta_l}{2}\langle m_{k\Delta_l}^N,Sm_{k\Delta_l}^N\rangle\right]
\Bigg\}. 
\end{align}
Let $\overline{U}_t^{N,l}(Y)=\log(\overline{Z}_t^{N,l}(Y))$, where we now consider the estimation of log-normalization constants. To enhance the efficiency we consider a coupled ensemble Kalman--Bucy filter, as described in Section \ref{sec:model}. Let $l\in\mathbb{N}$ then we run the coupled system of the different MLEnKBFs. Our multilevel estimator of log-normalizing constant given in \autoref{alg:MLEnKBF_NC}.


\begin{rem}
In order to use the analysis derived in \cite{CJY20}, for the normalizing constant estimation, our results will be specific to the  vanilla MLEnKBF variant $\mathrm{\textbf{(F1)}}$. This is important
as we will use the results presented in \cite{CJY20}, however, in terms of the numerics, we will use all ML variants described for various parameter estimation experiments. 
\end{rem}

\subsection{Single-level EnKBF}
To present a multilevel NC estimator using the EnKBF, we first require to understand the single-level EnKBF. To aid our analysis we will consider the i.i.d.~particle system based upon the Euler discretization of EnKBF for \textbf{(F1)}, such that for $(i,k)\in\{1,\dots,N\}\times\mathbb{N}_0$
\begin{align}\label{eq:enkf_is}
\zeta_{(k+1)\Delta_l}^i &=(I+A\Delta_l)\zeta_{k\Delta_l}^i + Q^{1/2} [\overline{W}_{(k+1)\Delta_l}^i-\overline{W}_{k\Delta_l}^i]  \\
&+ P_{k \Delta_l} C^{\top}R^{-1}\Big([Y_{(k+1)\Delta_l}-Y_{k\Delta_l}] -
\Big[C\zeta_{k\Delta_l}^i\Delta_l+R^{1/2}[\overline{V}_{(k+1)\Delta_l}^i-\overline{V}_{k\Delta_l}^i]\Big]\Big), \nonumber
\end{align}
such that  $\zeta_{(k+1)\Delta_l}^i|\mathscr{F}_{(k+1)\Delta_l}\stackrel{\textrm{i.i.d.}}{\sim}\mathcal{N}_{d_x}(m_{(k+1)\Delta_l},P_{(k+1)\Delta_l})$, where the moments are
defined as
\begin{align}
m_{(k+1)\Delta_l} & =  m_{k\Delta_l} + Am_{k\Delta_l}\Delta_l + P_{k \Delta_l} C^{\top}R^{-1}\Big(
[Y_{(k+1)\Delta_l}-Y_{k\Delta_l}] -Cm_{k\Delta_l}\Delta_l
\Big),\label{eq:iid_mean_rec}\\
P_{(k+1)\Delta_l} & =  P_{k\Delta_l} + \textrm{Ricc}(P_{k\Delta_l})\Delta_l +  (A-P_{k\Delta_l}S)P_{k\Delta_l}(A^{\top}-SP_{k\Delta_l})\Delta_l^2,\label{eq:iid_cov_rec}
\end{align}
which are satisfied by the Kalman--Bucy diffusion \eqref{eq:vanilla_kbf}.
We now consider the NC estimator $\widehat{\overline{U}}_{t+k_1\Delta_l}^{N,l}$, which instead using the recursion of \eqref{eq:enkf_is}.

\begin{rem}
Our reason for introducing the new estimator, based on \eqref{eq:enkf_is} - \eqref{eq:iid_cov_rec}, is that the usual described estimator of 
$\overline{U}_T^{ML}(Y)$ will not hold, in terms of the analysis, due to the recursion of the EnKBF, which is mentioned and documented in \cite{CJY20}. However, 
as we will see, by using a number of arguments one can show that both the multilevel estimators are close in probability, as $N \rightarrow \infty$.
 Therefore our analysis will be specific to  ML NC estimator $\widehat{\overline{U}}_T^{ML}(Y)$ i.e.
$$
\widehat{\overline{U}}_T^{ML}(Y)= \widehat{\overline{U}}_T^{N_{l_*},l_*}(Y) + \sum_{l=l_*+1}^L \{\widehat{\overline{U}}_T^{N_l,l}(Y)-\widehat{\overline{U}}_T^{N_l,l-1}(Y)\},
$$
 while we continue to use $\overline{U}_T^{ML}(Y)$ for the numerical examples.
\end{rem}

 We now present our first result of the paper which is a propagation of chaos result for the single-level vanilla EnKBF NC estimator.
For a $d_x-$dimensional vector $x$ we denote $\|x\|_2=(\sum_{j=1}^{d_x}x(j)^2)^{1/2}$, where $x(j)$ is the $j^{th}-$element of $x$. 
We use the notation $[{\overline{U}}_{t+k_1\Delta_l}^{N,l}-{\overline{U}}_{t+k_1\Delta_l}^l](Y)={\overline{U}}_{t+k_1\Delta_l}^{N,l}(Y)-{\overline{U}}_{t+k_1\Delta_l}^l(Y)$.
\begin{proposition}
\label{prop:var_term1_sec_state}
{For any $(l,t,k_1)\in\mathbb{N}_0\times\mathbb{R}^+\{0,1,\dots, \Delta_l^{-1}\}$ almost surely:
$$
 \lim_{N\rightarrow\infty}[{\overline{U}}_{t+k_1\Delta_l}^{N,l}-{\overline{U}}_{t+k_1\Delta_l}^l](Y) = 0.
$$}
\end{proposition}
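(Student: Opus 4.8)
The plan is to use that, with the level $l$ and the terminal index $t+k_1\Delta_l$ fixed, the estimator is a \emph{finite} sum over time steps,
$$
\overline{U}_{t+k_1\Delta_l}^{N,l}(Y) = \sum_{k=0}^{(t+k_1\Delta_l)\Delta_l^{-1}-1}\Big[\langle C m_{k\Delta_l}^{N}, R^{-1}[Y_{(k+1)\Delta_l}-Y_{k\Delta_l}]\rangle - \tfrac{\Delta_l}{2}\langle m_{k\Delta_l}^{N}, S m_{k\Delta_l}^{N}\rangle\Big],
$$
whose sole dependence on $N$ is through the empirical means $\{m_{k\Delta_l}^{N}\}_k$ of the discretized VEnKBF \eqref{eq:enkbf1}; the observation increments $Y_{(k+1)\Delta_l}-Y_{k\Delta_l}$ do not depend on $N$. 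The limit $\overline{U}_{t+k_1\Delta_l}^{l}(Y)$ is the same sum with $m_{k\Delta_l}^{N}$ replaced by the deterministic mean $m_{k\Delta_l}$ of \eqref{eq:iid_mean_rec}. As each summand is a fixed polynomial, hence continuous, function of $m_{k\Delta_l}^{N}$, it will suffice to show that $m_{k\Delta_l}^{N}\to m_{k\Delta_l}$ almost surely as $N\to\infty$ for each $k$ in the (finite) index range, and then to apply the continuous mapping theorem to the finitely many coordinates.

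For the almost sure convergence of the empirical means I would invoke the propagation-of-chaos estimates for the discretized VEnKBF from \cite{CJY20} (see also \cite{BD20,DT18}): for every $q\in[1,\infty)$ and every admissible $k$ there is a finite constant $C$, depending on $q,k,l$ and the model parameters but not on $N$, with
$$
\mathbb{E}\big[\|m_{k\Delta_l}^{N}-m_{k\Delta_l}\|_2^{q}\big]^{1/q} \le C N^{-1/2}, \qquad \mathbb{E}\big[\|P_{k\Delta_l}^{N}-P_{k\Delta_l}\|^{q}\big]^{1/q} \le C N^{-1/2}.
$$
These are obtained by induction on $k$: subtracting \eqref{eq:iid_mean_rec}--\eqref{eq:iid_cov_rec} from the empirical-mean and empirical-covariance recursions derived from \eqref{eq:enkbf1}, the one-step error splits into a part got by propagating $m_{k\Delta_l}^{N}-m_{k\Delta_l}$ and $P_{k\Delta_l}^{N}-P_{k\Delta_l}$ through bounded deterministic matrices, plus a fresh Monte Carlo fluctuation (empirical means of the conditionally independent particle increments, and of the driving Brownian increments, about their conditional moments) which is $O(N^{-1/2})$ by Marcinkiewicz--Zygmund/Rosenthal-type inequalities; cross terms are handled by Cauchy--Schwarz together with uniform-in-$N$ $\mathbb{L}_q$ moment bounds on the particles $\xi_{k\Delta_l}^{i}$, themselves proven by a parallel induction. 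With such a bound for any fixed $q>2$, Markov's inequality gives $\mathbb{P}(\|m_{k\Delta_l}^{N}-m_{k\Delta_l}\|_2>\varepsilon)\le C\varepsilon^{-q}N^{-q/2}$, which is summable in $N$, so the Borel--Cantelli lemma yields $m_{k\Delta_l}^{N}\to m_{k\Delta_l}$ almost surely; a finite union of the exceptional null sets over the relevant $k$ makes the convergence simultaneous.

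It then follows that, off a single null set, each summand of $\overline{U}_{t+k_1\Delta_l}^{N,l}(Y)$ converges to the corresponding summand of $\overline{U}_{t+k_1\Delta_l}^{l}(Y)$, whence $[\overline{U}_{t+k_1\Delta_l}^{N,l}-\overline{U}_{t+k_1\Delta_l}^{l}](Y)\to 0$ almost surely. The main obstacle is the propagation-of-chaos step: since the VEnKBF drift contains the sample covariance $P_{k\Delta_l}^{N}$ multiplicatively and $P_{k\Delta_l}^{N}$ depends on all particles, the coupled $(m^{N},P^{N})$ recursion is genuinely nonlinear (quadratic in $P^{N}$), so the induction must control the mean error, the covariance error, and uniform-in-$N$ particle moments simultaneously over the finite horizon. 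This is precisely the analysis carried out in \cite{CJY20}, which we take as given; the remaining ingredients (Borel--Cantelli and the continuous mapping theorem applied to finitely many coordinates) are routine.
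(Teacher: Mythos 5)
Your proof is correct, but it takes a genuinely different route from the paper's. The paper obtains the almost-sure limit by applying Markov's inequality and the first Borel--Cantelli lemma directly to the $\mathbb{L}_q$ bound $\mathbb{E}\big[\big|[\overline{U}_{t+k_1\Delta_l}^{N,l}-\overline{U}_{t+k_1\Delta_l}^{l}](Y)\big|^q\big]^{1/q}\le \mathsf{C}N^{-1/2}$ of Proposition~\ref{lem:sl_nc}, whose proof is the martingale--remainder decomposition carried out in Appendix~\ref{Appendix:A}. You instead apply Markov and Borel--Cantelli one level down, to the empirical means via the bound of Lemma~\ref{lem:mean} (quoted from \cite{CJY20}), and then observe that for fixed $l$ and fixed terminal time the log-NC estimator is a finite sum of continuous (affine-plus-quadratic) functions of those means, so pathwise convergence of the finitely many means off a single null set forces convergence of the estimator. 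Your route is more elementary for this purely qualitative statement --- it bypasses Proposition~\ref{lem:sl_nc} entirely --- but it does not yield the quantitative rate $\mathsf{C}/\sqrt{N}$ for the estimator difference itself, which the paper needs anyway for \eqref{eq:prob_est} and for the MSE analysis; that is presumably why the authors route the almost-sure statement through Proposition~\ref{lem:sl_nc}. Both arguments ultimately rest on the same propagation-of-chaos machinery (Marcinkiewicz--Zygmund type bounds on the empirical means), so nothing is circular. One terminological slip: $m_{k\Delta_l}$ is not deterministic --- it is $\mathscr{F}$-measurable, driven by $Y$ through \eqref{eq:iid_mean_rec} --- but what your argument actually uses is only that it does not depend on $N$, so the proof is unaffected.
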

\begin{proof}
{The proof follows by using Proposition \ref{lem:sl_nc} in the appendix and the Marcinkiewicz--Zygmund inequality for i.i.d.~random variables, along with a standard first Borel--Cantelli lemma argument.}
\end{proof}
Then using the same arguments, as described in \cite{CJY20}, one can show, through the Markov inequality that
\begin{equation}\label{eq:prob_est}
\mathbb{P}\left(\left|{\overline{U}}_{t+k_1\Delta_l}^{N,l}(Y)-\widehat{\overline{U}}_{t+k_1\Delta_l}^{N,l}(Y)\right|>\varepsilon\right) \leq \frac{\mathsf{C}}{\varepsilon^{2q} N^{q/2}}.
\end{equation}
for any $\varepsilon>0$ and $q>0$, where $\mathsf{C}$ is a constant that depends on $(l,q,t,k_1)$ but not $N$. Related to this in  Proposition \ref{prop:var_term1}, in the appendix, we have shown that:
$$
\mathbb{E}\Big[\Big\|[\widehat{\overline{U}}_{t+k_1\Delta_l}^{N,l}-{\overline{U}}_{t+k_1\Delta_l}](Y)\Big\|_2^2\Big] \leq \mathsf{C}\Big(\frac{1}{N}+\Delta_l^2\Big),
$$
where $\mathsf{C}$ does not depend upon $l$ nor $N$. Thus to achieve an MSE of order $\mathcal{O}(\epsilon^2)$, for $\epsilon>0$, we require a cost of  $\mathcal{O}(\epsilon^{-3})$, under a suitable choice of $L$ and $N$, using the single-level EnKBF \cite{MBG08}.

\subsection{Multilevel EnKBF}
Now to analyze the multilevel estimator, we will aim to prove results for the i.i.d. (ideal) coupled system for $(i,k)\in\{1,\dots,N\}\times\mathbb{N}_0$:
\begin{align}
 \label{eq:iid1}
\zeta_{(k+1)\Delta_l}^{i,l} & = \zeta_{k\Delta_l}^{i,l} + A\zeta_{k\Delta_l}^{i,l}\Delta_l + Q^{1/2} [\overline{W}_{(k+1)\Delta_l}^i-\overline{W}_{k\Delta_l}^i]  
\\&+ P^{N,l}_{k \Delta_l} C^{\top}R^{-1}\Big([Y^{{i}}_{(k+1)\Delta_l}-Y^{{i}}_{k\Delta_l}]
-\Big[C\zeta_{k\Delta_l}^{i,l}\Delta_l + R^{1/2}[\overline{V}_{(k+1)\Delta_l}^i-\overline{V}_{k\Delta_l}^i]\Big]\Big),\nonumber \\
 \label{eq:iid2}
\zeta_{(k+1)\Delta_{l-1}}^{i,l-1} & =  \zeta_{k\Delta_{l-1}}^{i,l-1} + A\zeta_{k\Delta_{l-1}}^{i,l-1}\Delta_{l-1} + Q^{1/2} [\overline{W}_{(k+1)\Delta_{l-1}}^i-\overline{W}_{k\Delta_{l-1}}^i] 
\\&+ P^{N,l-1}_{k \Delta_{l-1}} C^{\top}R^{-1}\Big([Y^{{i}}_{(k+1)\Delta_{l-1}}-Y^{{i}}_{k\Delta_{l-1}}] -\Big[C\zeta_{k\Delta_{l-1}}^{i,l-1}\Delta_{l-1}+ R^{1/2}[\overline{V}_{(k+1)\Delta_{l-1}}^i-\overline{V}_{k\Delta_{l-1}}^i]\Big]\Big), \nonumber
\end{align}
Now we set
\begin{equation}\label{eq:main_est_iid}
\widehat{\overline{U}}_t^{ML}(Y):=\widehat{\overline{U}}_t^{N_0,0}(Y) + \sum_{l-1}^L \big[\widehat{\overline{U}}_t^{N_l,l} -\widehat{\overline{U}}_t^{N_l,l-1}\big](Y),
\end{equation}
Therefore using the same argument as in \cite{CJY20}, one can establish, almost surely
$$
\lim_{\min_l N_l\rightarrow\infty}[{\overline{U}}_t^{ML}-\widehat{\overline{U}}_t^{ML}](Y) = 0, \quad \mathbb{P}\left(\left|[{\overline{U}}_t^{ML}-\widehat{\overline{U}}_t^{ML}](Y)\right|>\varepsilon\right) \leq \frac{\mathsf{C}}{\varepsilon^{2q}}\left(\sum_{l=0}^L \frac{1}{N_l^{q/2}}\right),
$$
where $\mathsf{C}$ is a constant that can depend on $(L,q,t)$ but not $N_{0:L}$.
\\\\
We are now in a position to state our main result, concerned with using the vanilla MLEnKBF, for the i.i.d. system, as a NC estimator. 
\begin{theorem}
\label{theo:main}
For any $T\in\mathbb{N}$ fixed and $t\in[0,T-1]$ there exists a $\mathsf{C}<+\infty$ such that for any $(L,N_{0:L})\in\mathbb{N}\times\{2,3,\dots\}^{L+1}$,
$$
\mathbb{E}\left[\left\|[\widehat{\overline{U}}_t^{ML}-{\overline{U}}_t](Y)\right\|_2^2\right] \leq \mathsf{C}\left(
\sum_{l=0}^L \frac{\Delta_l}{N_l} + \sum_{l=1}^L\sum_{q=1, q\neq l}^L\frac{\Delta_l\Delta_q}{N_lN_q} + \Delta_L^2
\right).
$$
The above theorem translates as, in order to achieve an MSE of order $\mathcal{O}(\epsilon^2)$, for $\epsilon>0$, we have a cost of $\mathcal{O}(\epsilon^{-2} ~\log(\epsilon)^2)$. This implies a reduction in cost compared to the single-level NC estimator.
\end{theorem}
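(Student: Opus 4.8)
The plan is to follow the bias--variance strategy used for the MLEnKBF in \cite{CJY20}, adapted to the (nonlinear) log-normalizing-constant functional. Write $\overline{U}_t^l$ for the mean-field limit of $\widehat{\overline{U}}_t^{N,l}$, i.e. the object obtained from \eqref{eq:disc_nc_estimate} on replacing the ensemble mean $m^N$ by the exact discretized Kalman mean of \eqref{eq:iid_mean_rec}. Using the telescoping identity \eqref{eq:main_est_iid},
$$
\widehat{\overline{U}}_t^{ML}(Y) - \overline{U}_t(Y) \;=\; \sum_{l=0}^L D_l \;+\; \big(\overline{U}_t^L - \overline{U}_t\big)(Y),
$$
with $D_0 := \big[\widehat{\overline{U}}_t^{N_0,0}-\overline{U}_t^0\big](Y)$ and, for $l\geq 1$, $D_l := \big[\widehat{\overline{U}}_t^{N_l,l}-\overline{U}_t^l\big](Y) - \big[\widehat{\overline{U}}_t^{N_l,l-1}-\overline{U}_t^{l-1}\big](Y)$. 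Squaring and expanding, the left-hand side of the theorem is bounded up to a constant by $\sum_{l=0}^L \mathbb{E}[D_l^2] + \sum_{l\neq q}\mathbb{E}[D_lD_q] + \mathbb{E}[(\overline{U}_t^L-\overline{U}_t)^2]$. The last term is the discretization bias: letting $N\to\infty$ in Proposition \ref{prop:var_term1} bounds it by $\mathsf{C}\Delta_L^2$, which produces the $\Delta_L^2$ term in the statement. It therefore remains to control the diagonal and off-diagonal sums.

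For the diagonal terms I would prove $\mathbb{E}[D_l^2] \leq \mathsf{C}\Delta_l/N_l$ uniformly in $l$; this is the core estimate. The key structural fact is that the fluctuation $\delta^{(l)}_{k\Delta_l} := m^{N_l,l}_{k\Delta_l} - m_{k\Delta_l}$ obeys a linear recursion with deterministic coefficients (those of \eqref{eq:iid_mean_rec}) driven \emph{only} by the ensemble-averaged particle Brownian increments --- the observation-dependent terms cancel on subtracting \eqref{eq:iid_mean_rec} from the ensemble average of \eqref{eq:iid1} --- so $\mathbb{E}\|\delta^{(l)}_{k\Delta_l}\|_2^2\leq \mathsf{C}/N_l$ uniformly and $\delta^{(l)}$ is independent of $\mathscr{F}_t$. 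Substituting $m^{N,l}=m^l+\delta^{(l)}$ into \eqref{eq:disc_nc_estimate} writes $\widehat{\overline{U}}_t^{N,l}-\overline{U}_t^l$ as a sum over the grid of terms linear in $\delta^{(l)}$ (paired with the observation increments $[Y_{(k+1)\Delta_l}-Y_{k\Delta_l}]$ and with $\Delta_l S m_{k\Delta_l}^l$) plus quadratic terms $\tfrac{\Delta_l}{2}\langle\delta^{(l)}_{k\Delta_l},S\delta^{(l)}_{k\Delta_l}\rangle$. In $D_l$ one couples level $l$ and $l-1$ through the usual MLMC pairing of Brownian increments; the linear part then collapses to a sum of ``odd sub-step'' mismatches of the form $\big\langle \delta^{(l)}_{(2j+1)\Delta_l}-\delta^{(l-1)}_{j\Delta_{l-1}},\, C^\top R^{-1}[Y_{(2j+2)\Delta_l}-Y_{(2j+1)\Delta_l}]\big\rangle$, whose summands are mean zero and essentially uncorrelated across $j$ (the ensemble noise being independent of $Y$), each of size $O(\sqrt{\Delta_l/N_l})\cdot O(\sqrt{\Delta_l})$; summing the $O(\Delta_l^{-1})$ of them gives $O(\Delta_l/N_l)$ in mean square. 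The even sub-steps, the $\Delta_l S m^l$ terms, and the quadratic-in-$\delta^{(l)}$ terms are all shown to be of strictly lower order, using the strong order-one coupling $\mathbb{E}\|\delta^{(l)}_{2j\Delta_l}-\delta^{(l-1)}_{j\Delta_{l-1}}\|_2^2\leq \mathsf{C}\Delta_l^2/N_l$ of the linear recursion together with the ensemble moment bounds of \cite{CJY20}.

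For the off-diagonal terms the decisive point is that, for $l\neq q$, the increments $D_l$ and $D_q$ are built from mutually independent ensembles and are hence conditionally independent given $\mathscr{F}_t$, so $\mathbb{E}[D_lD_q]=\mathbb{E}\big[\mathbb{E}[D_l\mid\mathscr{F}_t]\,\mathbb{E}[D_q\mid\mathscr{F}_t]\big]$. Since $\mathbb{E}[\delta^{(l)}_{k\Delta_l}\mid\mathscr{F}_t]=0$ and $\mathbb{E}[\langle\delta^{(l)}_{k\Delta_l},S\delta^{(l)}_{k\Delta_l}\rangle\mid\mathscr{F}_t]=\tfrac{1}{N_l}\mathrm{tr}(SP_{k\Delta_l})$, the only surviving contribution to $\mathbb{E}[D_l\mid\mathscr{F}_t]$ is the difference of the $O(1/N_l)$ Jensen-type corrections of the quadratic term at levels $l$ and $l-1$, namely $-\tfrac{1}{2N_l}\big[\sum_k\Delta_l\,\mathrm{tr}(SP^{l}_{k\Delta_l})-\sum_{k}\Delta_{l-1}\,\mathrm{tr}(SP^{l-1}_{k\Delta_{l-1}})\big]$; both sums approximate $\int_0^t\mathrm{tr}(S\mathcal{P}_s)\,ds$ to first order in the mesh, so this quantity is deterministic and of size $O(\Delta_l/N_l)$. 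Cauchy--Schwarz then gives $\mathbb{E}[D_lD_q]\leq \mathsf{C}\tfrac{\Delta_l}{N_l}\tfrac{\Delta_q}{N_q}$, which is exactly the cross term in the statement (the finitely many cross terms involving $D_0$ being dominated by $\sum_l\Delta_l/N_l$ since $N_0\geq 2$). Assembling the three bounds yields the inequality, and the cost claim follows by the standard MLMC optimisation: with $\Delta_L\asymp\epsilon$ (so $L\asymp|\log\epsilon|$) and $N_l\asymp\Delta_l\,|\log\epsilon|\,\epsilon^{-2}$ the MSE is $\mathcal{O}(\epsilon^2)$ at cost $\sum_l N_l\Delta_l^{-1}=\mathcal{O}(\epsilon^{-2}|\log\epsilon|^2)$.

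The step I expect to be the main obstacle is the diagonal bound $\mathbb{E}[D_l^2]\leq\mathsf{C}\Delta_l/N_l$: one must extract simultaneously the $1/N_l$ gain from averaging the i.i.d. ensemble members and the $\Delta_l$ gain from the multilevel coupling, and propagate both through the quadratic nonlinearity of $\overline{U}_t$ while carefully tracking the $1/N$ correction in the quadratic term --- it is precisely this correction, rather than something that may be discarded, that generates the $\Delta_l\Delta_q/(N_lN_q)$ cross term and must be retained throughout.
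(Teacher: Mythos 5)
Your proposal follows the paper's proof essentially verbatim: the same telescoping/$C_2$ decomposition into the level-$0$ variance, the coupled increments $D_l$, and the $\Delta_L^2$ bias term, with your diagonal bound $\mathbb{E}[D_l^2]\le\mathsf{C}\Delta_l/N_l$ playing exactly the role of Proposition \ref{prop:var_term2} (which the paper establishes through a martingale--remainder decomposition and Lemmas \ref{lem:R1}--\ref{lem:R2} rather than your fluctuation-recursion argument, but to the same effect). The one place you add something the paper leaves implicit is the cross term: the paper merely invokes ``independence of the coupled particle systems,'' whereas you correctly observe that obtaining $\Delta_l\Delta_q/(N_lN_q)$ (rather than its square root, which would cost an extra $|\log\epsilon|$ in the complexity) requires the conditional bias $\mathbb{E}[D_l\mid\mathscr{F}_t]=O(\Delta_l/N_l)$, which your analysis of the quadratic term supplies.
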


\begin{rem}
It is important to note that the rates obtained in \autoref{prop:var_term1_sec_state} and \autoref{theo:main} for the normalizing constants,
 are identical to those derived in \cite{CJY20}. However despite this it should be emphasized as we will see in the appendix, that the proofs are not trivial, and all 
 of them do not directly follow exactly.
 \end{rem}

\begin{algorithm}[h!]
\caption{(\textbf{MLEnKBF-NC}) Multilevel Estimation of Normalizing Constants}
\label{alg:MLEnKBF_NC}
\begin{enumerate}
\item \textbf{Input:} Target level $L\in\mathbb{N}$, start level $l_*\in \mathbb{N}$ such that $l_*<L$, the number of particles on each level $\{N_l\}_{l=l_*}^L$, the time parameter $T\in\mathbb{N}$ and initial independent ensembles $\Big\{\{\tilde{\xi}_0^{i,l_*}\}_{i=1}^{N_{l_*}}, \cdots, \{\tilde{\xi}_0^{i,L}\}_{i=1}^{N_{L}}\Big\}$.
\item \textbf{Initialize:} Set $l=l_*$. For $(i,k)\in \{1,\cdots,N_l\}\times \{0,\cdots,T\Delta_l^{-1}-1\}$, set $\{\xi_0^{i,l}\}_{i=1}^{N_l} = \{\tilde{\xi}_0^{i,l}\}_{i=1}^{N_l}$ and simulate any of the cases \eqref{eq:enkbf1}-\eqref{eq:dtenkbf1} to return $\{m^{N_l,l}_{k\Delta_l}\}_{k=0}^{T\Delta_l^{-1}-1}$. Then using \eqref{eq:disc_nc_estimate}, return $\overline{Z}_T^{N_l,l}(Y)$ or $\overline{U}_T^{N_l,l}(Y)$.
\item \textbf{Iterate:} For $l \in \{l_*+1,\cdots,L\}$ and $(i,k)\in \{1,\cdots,N_l\} \times \{0,\cdots,T\Delta_l^{-1}-1\}$, set $\{\xi_0^{i,l-1}\}_{i=1}^{N_l}=\{\xi_0^{i,l}\}_{i=1}^{N_l}= \{\tilde{\xi}_0^{i,l}\}_{i=1}^{N_l}$, and simulate any of the coupled ensembles \textbf{(F1)}-\textbf{(F3)} (the one which corresponds to the case used in Step 2.) to return $\{m^{N_l,l-1}_{k\Delta_{l-1}}\}_{k=0}^{T\Delta_{l-1}^{-1}-1}$ and $\{m^{N_l,l}_{k\Delta_l}\}_{k=0}^{T\Delta_l^{-1}-1}$. Then using \eqref{eq:disc_nc_estimate}, return $\overline{Z}_T^{N_l,l-1}(Y)$ \& $\overline{Z}_T^{N_l,l}(Y)$ or $\overline{U}_T^{N_l,l-1}(Y)$ \& $\overline{U}_T^{N_l,l}(Y)$.
\item \textbf{Output:} Return the multilevel estimation of the normalizing constant:
\begin{align}
\label{eq:MLEnKBF_NC}
\overline{Z}_T^{ML}(Y) = \overline{Z}_T^{N_{l_*},l_*}(Y) + \sum_{l=l_*+1}^L \{\overline{Z}_T^{N_l,l}(Y)-\overline{Z}_T^{N_l,l-1}(Y)\}.
\end{align}
or its logarithm:
\begin{align}
\label{eq:MLEnKBF_Log_NC}
\overline{U}_T^{ML}(Y) = \overline{U}_T^{N_{l_*},l_*}(Y) + \sum_{l=l_*+1}^L \{\overline{U}_T^{N_l,l}(Y)-\overline{U}_T^{N_l,l-1}(Y)\}.
\end{align}
\end{enumerate}
\end{algorithm}

\section{Numerical simulations}
\label{sec:param_estim}
In this section we provide various numerical experiments, which include both verifying the ML rates obtained, and for parameter estimation on both linear and nonlinear models.
The former will be tested on an linear example, which the later will include a toy linear Gaussian example, a stochastic Lorenz 63 model and  Lorenz 96 model, which are common models 
 that arise in atmospheric sciences. Our parameter estimation will be conducted through a combination of using recursive maximum 
 likelihood estimation, and simultaneous perturbation stochastic approximation, which is a gradient-free methodology.

\subsection{Verification of multilevel rates}

We now seek to verify our theory from \autoref{prop:var_term1_sec_state} and \autoref{theo:main} on an Ornstein--Uhlenbeck process,
taking the form of  \eqref{eq:data} - \eqref{eq:signal}. We will compare the error-to-cost rates of the MLEnKBF estimation versus the EnKBF estimation of log-normalizing constant. This will be numerically tested using all variants \textbf{(F1)} - \textbf{(F3)}, and their respective multilevel counterparts. 
We will take $d_x=d_y=5$, $A=-0.8~Id$, $\mathcal{M}_0=0.1~\textbf{1}$, 
$\mathcal{P}_0=0.05~Id$, $R^{1/2}=2~Id$, $C$ to be a random matrix and $Q^{1/2}$ is a tri-diagonal matrix defined as 
$$ Q^{1/2} = 
\begin{bmatrix}
2/3  & 1/3     &       &     \hdots    & 0   \\
1/3  & 2/3 & \ddots &  &         \\
 & \ddots & \ddots & \ddots &    \\
\vdots &        &  & \ddots & 1/3 \\
0 &   \hdots    &       & 1/3  & 2/3
\end{bmatrix},
$$
where $Id$ is the identity matrix of appropriate dimension and $\textbf{1}$ is a vector of ones. We computed the MSE for the target levels $L\in\{7,8,9,10\}$ using 416 simulations with $t=1$. In order to better estimate the quantity in \eqref{eq:disc_nc_estimate}, we used $l_*=6$. The number of samples on each level is given by
\begin{align}
\label{eq:num_of_samples}
N_l = \floor*{C_0~ 2^{2L-l} ~(L-l_*+1)}.
\end{align}
The reference value is the mean of 208 simulations computed at a discretization level $l=11$. For the NC estimator, using the EnKBF, the cost of the estimator is $N_{L}\Delta_L^{-1}$.
When using the MLEnKBF-NC, the cost of the estimator is $ \sum_{l=l_*}^LN_{l}\Delta_l^{-1}$.
\begin{figure}[!htb]
	\begin{subfigure}[c]{0.49\textwidth}
	\includegraphics[width=1.\textwidth]{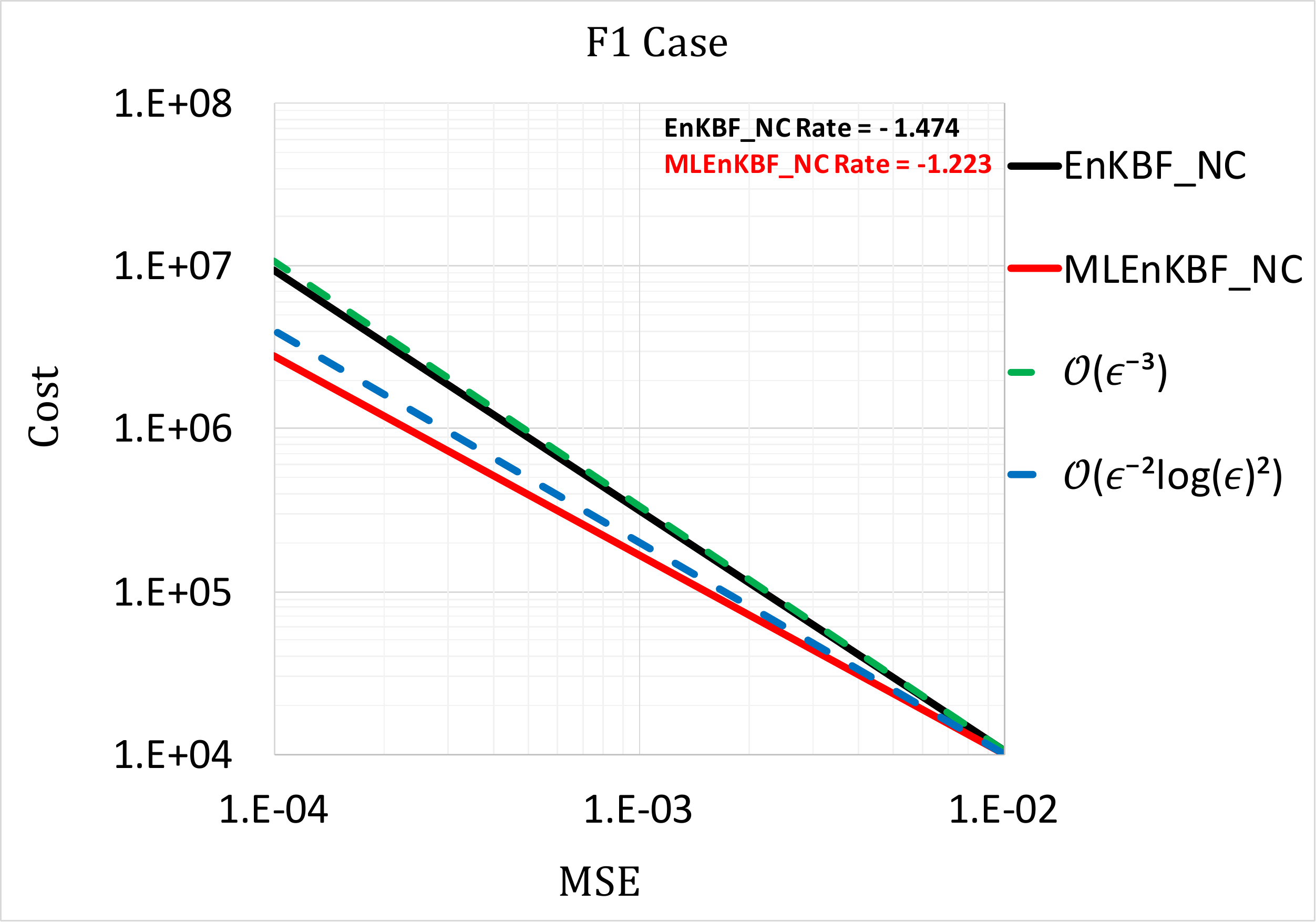}
	\end{subfigure}
	\begin{subfigure}[c]{0.5\textwidth}
	\includegraphics[width=1.\textwidth]{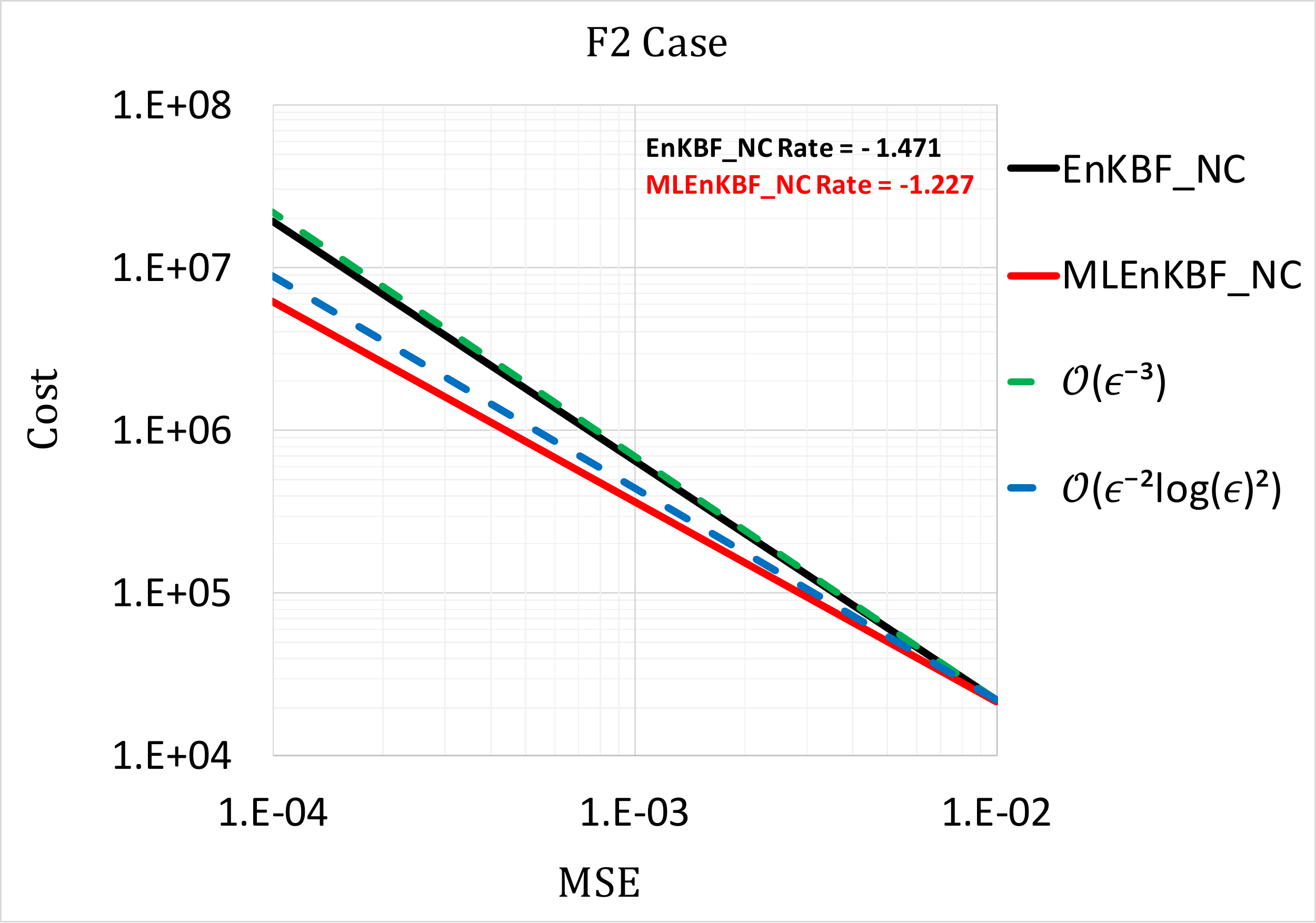}
	\end{subfigure}
	\\ \centering
	\begin{subfigure}[c]{0.5\textwidth}
	\includegraphics[width=1.\textwidth]{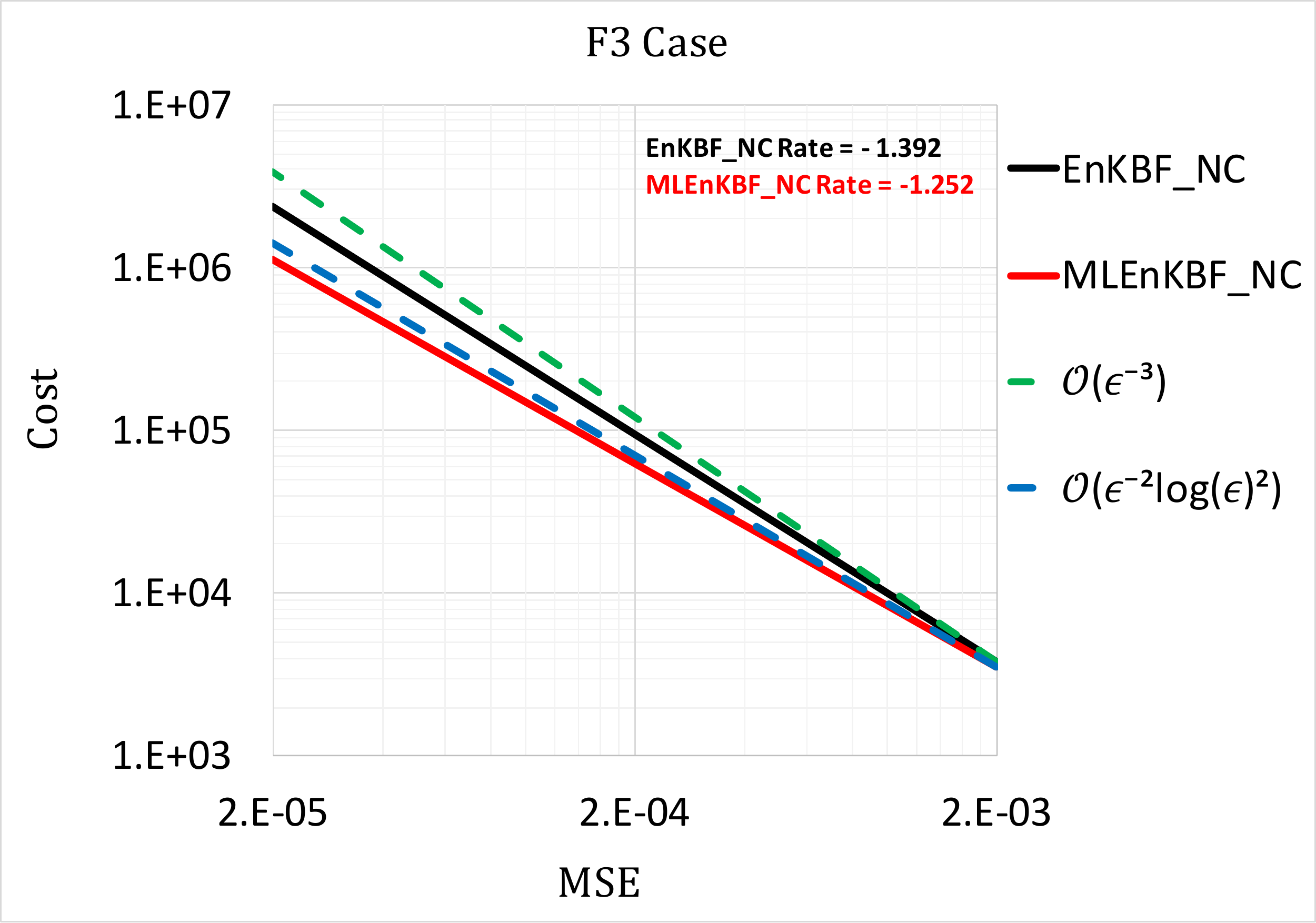}
	\end{subfigure}
    \caption{MSE vs Cost plots on a log-log scale for the linear Gaussian model \eqref{eq:data} - \eqref{eq:signal}, comparing EnKBF and MLEnKBF estimators of the normalizing constant.}
    \label{fig:rates}
\end{figure} 
Our numerical results are presented in \autoref{fig:rates}, where we have plotted the ML and single-level (SL) rates for each variant, within each subplot. Let us first consider \textbf{F(1)}, which is the vanilla MLEnKBF. The results obtained match that from both \autoref{prop:var_term1_sec_state} and \autoref{theo:main}, which suggest error-to-cost rates of $\mathcal{O}(\epsilon^{-3})$ and $\mathcal{O}(\epsilon^{-2}\log(\epsilon)^2)$. This is indicated through the slopes, which show that to attain an order of MSE, the MLEnKBF-NC is computationally cheaper than that of the EnKBF-NC. Our next subplot concerns \textbf{F(2)}. Despite no existing theory for this case, or its SL counterpart, we see that the results are similar to the vanilla variant. Specifically, rates given for both slopes are similar, where there is little
distinguishment. Finally for the final variant of \textbf{F(2)}, the results obtained are quite different. Firstly we notice the slopes are more different to the other variants, but also that the theory of the rate $\mathcal{O}(\epsilon^{-3})$ does not hold as convincingly. This could be related to the lack of stochasticity within the methodology. However from all subplots, one can see that by applying MLMC, one does attain a lower cost for a particular order of MSE.

\subsection{Parameter estimation}
 
Let us firstly assume that the model \eqref{eq:dat} - \eqref{eq:sig} contains unknown parameters $\theta\in \Theta \subseteq \mathbb{R}^{d_\theta}$. To account for this, we rewrite the normalizing constant \eqref{eq:nc} with the additional subscript $\theta$ as $\overline{Z}_{t,\theta}(Y)$. To estimate these parameters we focus on maximum likelihood inference and stochastic gradient
methods that are performed in an online manner. Mainly, we follow a recursive maximum likelihood (RML) method, which has been proposed originally in \cite{AM90}, in \cite{LM97} for finite spaces, and in \cite{BCJKR20, DDS10, PDS11} in the context of sequential Monte Carlo (SMC) approximations. Let 
$$
U_{t,t+1,\theta}(Y) := \log \frac{\overline{Z}_{t+1,\theta}(Y) }{\overline{Z}_{t,\theta}(Y) }.
$$
RML relies on the following update scheme at any time $t\in \mathbb{N}$:
\begin{align*}
\theta_{t+1} &= \theta_t + a_t \left(  \nabla_\theta \log \overline{Z}_{t+1,\theta_t}(Y)  - \nabla_\theta \log \overline{Z}_{t,\theta_t}(Y)  \right) \\
&= \theta_t + a_t \nabla_\theta~ U_{t,t+1,\theta_t}(Y),
\end{align*}
where $\{a_t\}_{t\in\mathbb{N}}$ is a sequence of positive real numbers such that we assume the usual Robbins--Munro conditions, i.e. $\sum_{t\in\mathbb{N}} a_t =\infty$ and $\sum_{t\in\mathbb{N}} a_t^2 < \infty$. Given an initial $\theta_0\in\Theta$, this formula enables us to update $\theta$ online as we obtain a new observation path in each unit time interval. Computing
the gradients in the above formula can be expensive (see e.g. \cite{BCJKR20}), therefore we use a gradient-free method that is based on some type of finite differences with simultaneous perturbation stochastic approximation (SPSA) \cite{JCS03,JCS92}. In a standard finite difference approach, one perturbs $\theta_t$ in the positive and negative directions of a unit vector $\textbf{e}_k$ (a vector of zeros in all directions except $k$ it is 1). This means evaluating $U_{t,t+1,\theta_t}(Y)$ $2d_\theta$--times. 

Whereas in SPSA, we perturb $\theta_t$ with a magnitude of $b_t$ in the positive and negative directions of a $d_\theta$-dimensional random vector $\Psi_t$. The numbers $\{b_t\}_{t\in \mathbb{N}}$ are a sequence of positive real numbers such that $b_t \to 0$, $\sum_{t\in \mathbb{N}} a_t^2/b_t^2<\infty$, and for $k\in\{1\cdots,d_\theta\}$, $\Psi_t(k)$ is sampled from a Bernoulli distribution with success probability $1/2$ and support $\{-1,1\}$. Therefore, this method requires only 2 evaluations of $U_{t,t+1,\theta_t}(Y)$ to estimate the gradient. The MLEnKBF-NC estimator, presented in \autoref{alg:MLEnKBF_NC}, will be used to estimate the ratio $U_{t,t+1,\theta_t}(Y)$. In \autoref{alg:param_est} we illustrate how to implement these approximations in order to estimate the model's static parameters.

\begin{algorithm}[h!]
\caption{Parameter Estimation: using MLEnKBF-NC and RML-SPSA}
\label{alg:param_est}
\begin{enumerate}
\item \textbf{Input:} Target level $L\in\mathbb{N}$, start level $l_*\in \mathbb{N}$ such that $l_*<L$, the number of particles on each level $\{N_l\}_{l=l_*}^L$, the number of iterations $M\in\mathbb{N}$, initial $\theta_0\in\Theta$, step size sequences of positive real numbers $\{a_t\}_{t\in\mathbb{N}}$, $\{b_t\}_{t\in\mathbb{N}}$ such that $a_t,b_t \to 0$, $\sum_{t\in\mathbb{N}} a_t=\infty$, $\sum_{t\in\mathbb{N}} a_t^2/b_t^2<\infty$, and initial ensembles $\{\tilde{\xi}_0^{i}\}_{i=1}^{N_{tot}} \stackrel{i.i.d.}{\sim} \mathcal{N}(\mathcal{M}_0,\mathcal{P}_0)$, where $N_{tot}=\sum_{l=l_*}^L N_l$.

\item \textbf{Iterate:} For $t \in \{0,\cdots,M-1\}$: 
\begin{itemize}
\item Set $\{\xi_0^{i,l_*}\}_{i=1}^{N_{l_*}}=\{\tilde{\xi}_0^{i}\}_{i=1}^{N_{l_*}}$, $\cdots$, $\{\xi_0^{i,L}\}_{i=1}^{N_L}=\{\tilde{\xi}_0^{i}\}_{i=N_{L-1}+1}^{N_L}$.
\item For $k\in \{1,\cdots,d_\theta\}$, sample $\Psi_t(k)$ from a Bernoulli distribution with \\ success probability $1/2$ and support $\{-1,1\}$. 
\item Set $\theta_t^+= \theta_t + b_{t+1} \Psi_t$ and $\theta_t^-= \theta_t - b_{t+1} \Psi_t$.
\item Run \autoref{alg:MLEnKBF_NC} twice, with $T=1$ and initial ensembles $\Big\{ \{\xi_0^{i,l_*}\}_{i=1}^{N_{l_*}},\cdots,\{\xi_0^{i,L}\}_{i=1}^{N_L} \Big\}$, \\ to generate the estimates $U_{t,t+1,\theta_t^+}(Y)$ and $U_{t,t+1,\theta_t^-}(Y)$.
\item Set for $k\in\{1,\cdots,d_\theta\}$,
\begin{align*}
\theta_{t+1}(k) &= \theta_t(k) + \frac{a_{t+1}}{2b_{t+1}\Psi_t(k)} \Big[ U_{t,t+1,\theta_t^+}(Y)-U_{t,t+1,\theta_t^-}(Y) \Big].
\end{align*}
\item Run the EnKBF up to time 1 under the new parameter $\theta_{t+1}$ with discretization level $L$ and initial ensembles $\{\tilde{\xi}_0^{i}\}_{i=1}^{N_{tot}}$ to return $\{\tilde{\xi}_1^{i}\}_{i=1}^{N_{tot}}$.
\item Set $\{\tilde{\xi}_0^{i}\}_{i=1}^{N_{tot}}=\{\tilde{\xi}_1^{i}\}_{i=1}^{N_{tot}}$.
\end{itemize}
\end{enumerate}
\end{algorithm}

We will now introduce our different models we work with. This includes a linear and two nonlinear models, namely, the stochastic Lorenz 63 and Lorenz 96.

\subsubsection{Linear Gaussian model}
\label{subsec:lin_model}

Our first numerical example is a linear Gaussian model with $d_x=d_y=2$, based on \eqref{eq:data} - \eqref{eq:signal}. Specifically we choose the parameter values $A = \theta_1 Id$ and $Q^{1/2} = \theta_2 \mathcal{Q}$
where $\mathcal{Q}$ is a tri-diagnonal matrix defined as 
$$\mathcal{Q} = 
\begin{bmatrix}
1  & 1/2     &   0    &       \hdots & 0  \\
1/2  & 1 & 1/2 &  &   \vdots     \\
 0  &  \ddots & \ddots & \ddots  & 0 \\
\vdots &        & \ddots & \ddots & 1/2 \\
0 &   \hdots    &        0   & 1/2  & 1
\end{bmatrix}.
$$
We also take $C$ to be a uniform random matrix and $R^{1/2} =0.556~ Id$. The parameters of interest we aim to estimate are $(\theta_1,\theta_2) \in \mathbb{R}^2$. We choose the target level as $L=9$ and the start level as $l_*=7$, and specify an initial state of $X_0 \sim \mathcal{N}(4\mathbf{1},Id)$, where $\mathbf{1}$ is a vectors of 1's. In all cases, \textbf{(F1)} - \textbf{(F3)}, we take
\begin{align}
\label{eq:num_of_samples}
N_l = \floor*{0.04~ 2^{2L-l} ~(L-l_*+1)}.
\end{align}
The results are displayed in \autoref{fig:param_est_Lin}. In order to reduce the variance between the different simulations, we used the same Brownian increments (in the cases \textbf{(F1)} and \textbf{(F2)}) that were needed to generate $U_{t,t+1,\theta_t^+}(Y)$ and $U_{t,t+1,\theta_t^-}(Y)$ as well. We applied this trick only on the linear model, but it can be extended to the nonlinear models below. We observe from \autoref{fig:param_est_Lin} that all the variants learn of the true parameters $(\theta_1^*,\theta_2^*)=(-2,1)$, and from the bottom panel, $(\textbf{F3})$ has the smallest standard deviation.

\begin{rem}
 When testing  \textbf{(F3)} in the linear example, there were difficulties in learning $\theta_2$. This is because there is no natural coupling,  
hence why we had to choose a specific step-size as described in \autoref{fig:param_est_Lin}.
However for the nonlinear examples we did not experience this, where no specific modification of the step-size was required.
\end{rem}

\begin{figure}[!htb]
	\begin{subfigure}[c]{0.49\textwidth}
	\includegraphics[width=1.\textwidth]{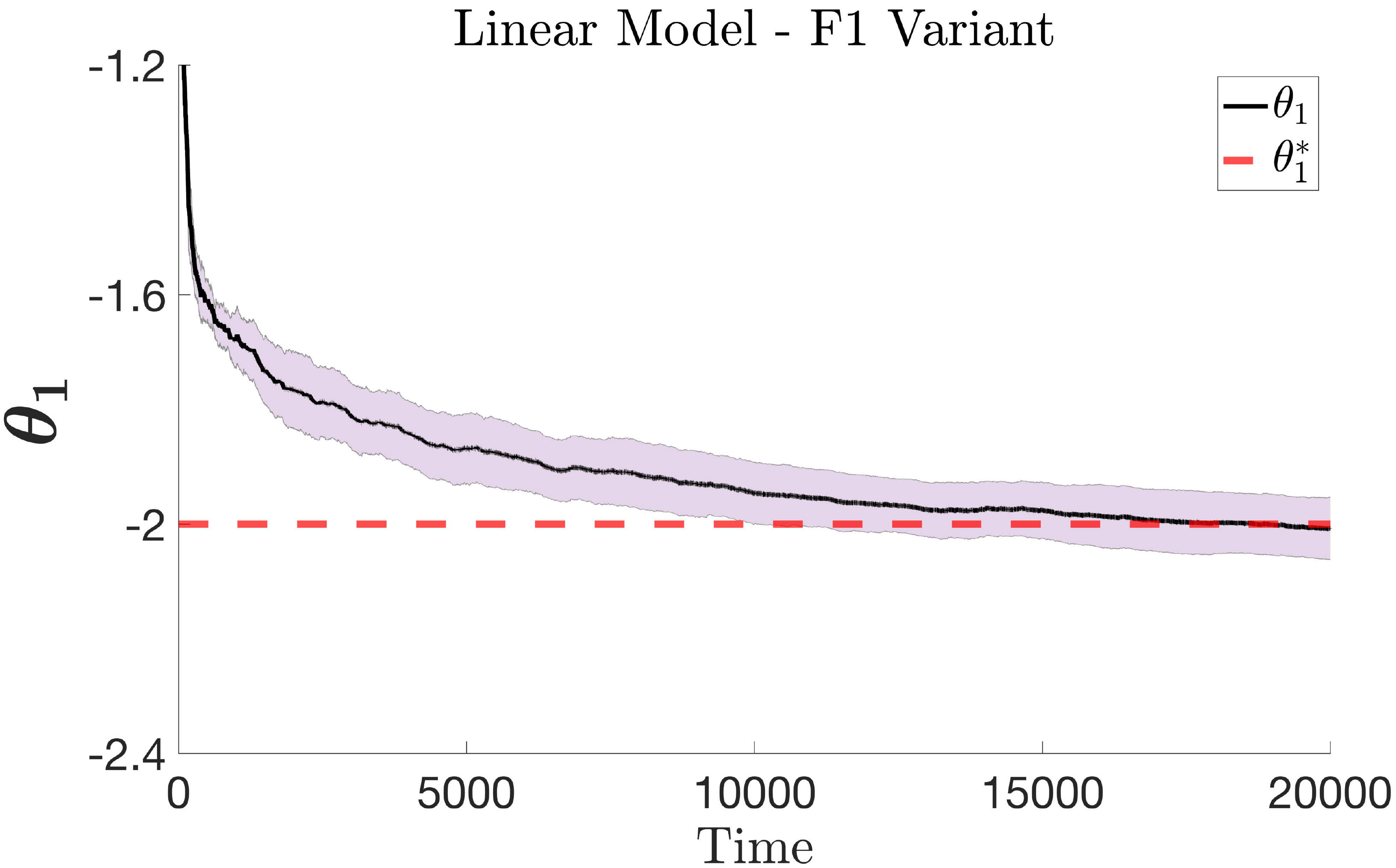}
	\end{subfigure}
	\begin{subfigure}[c]{0.5\textwidth}
	\includegraphics[width=1.\textwidth]{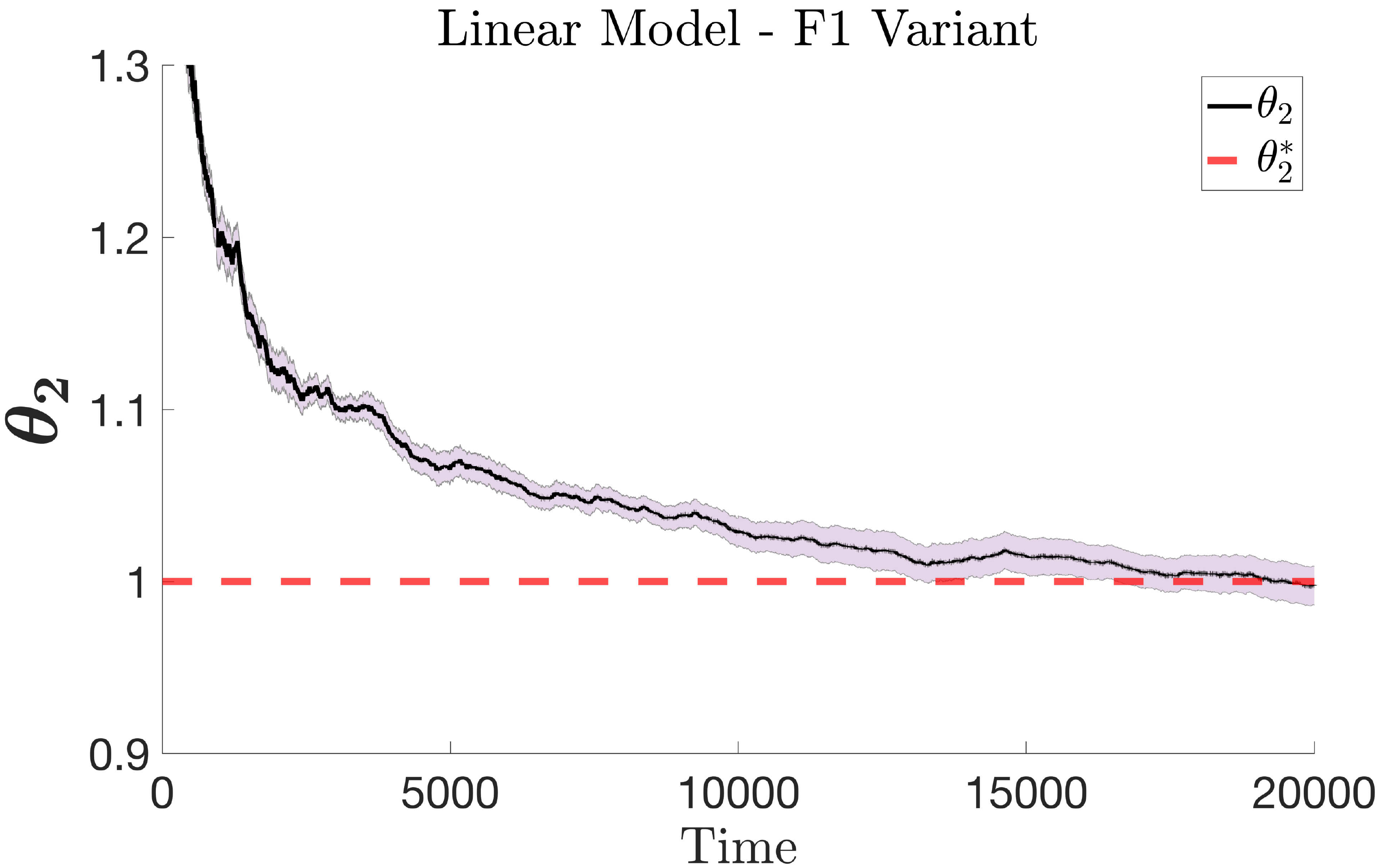}
	\end{subfigure}\\
	\begin{subfigure}[c]{0.49\textwidth}
	\includegraphics[width=1.\textwidth]{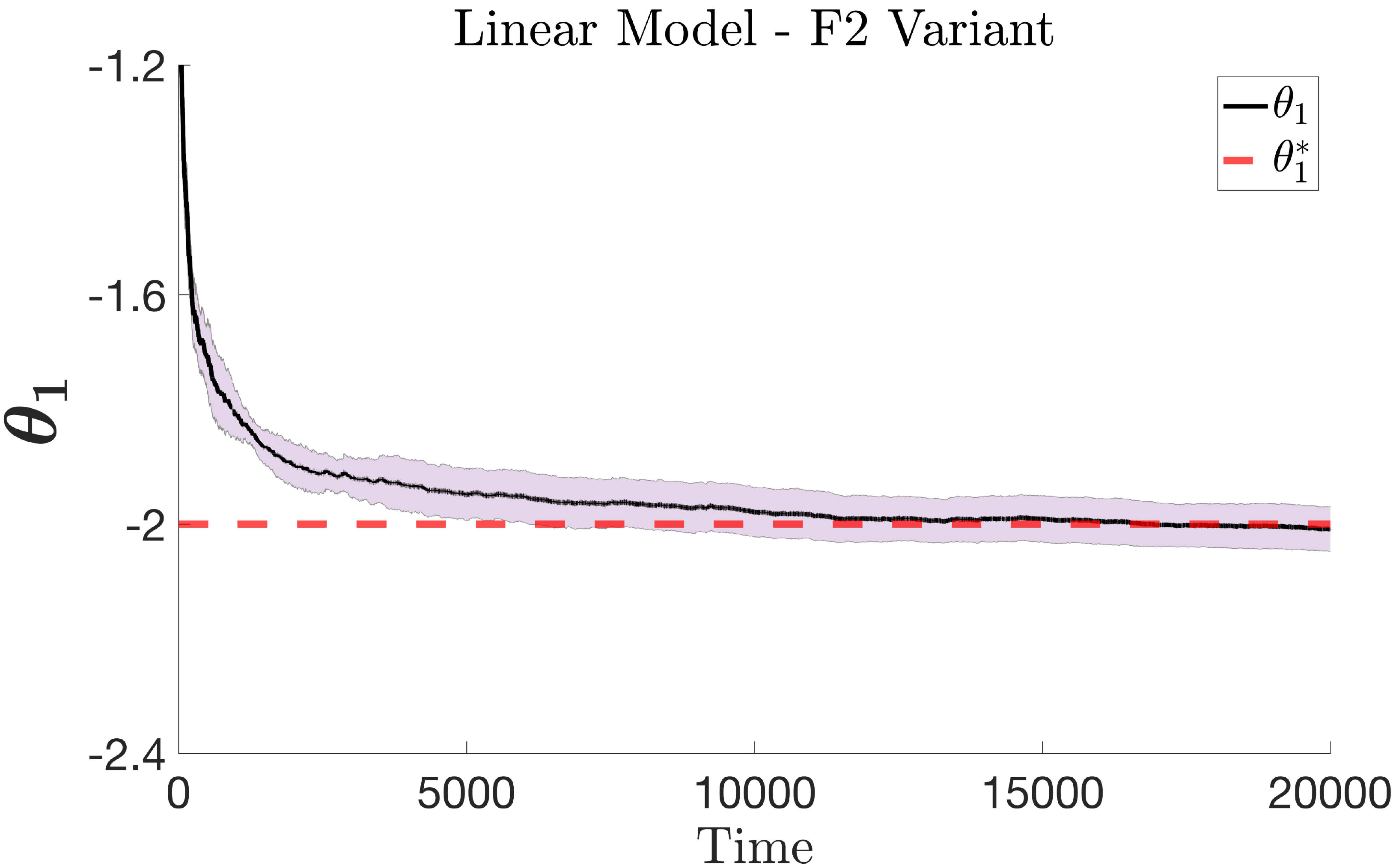}
	\end{subfigure}
	\begin{subfigure}[c]{0.5\textwidth}
	\includegraphics[width=1.\textwidth]{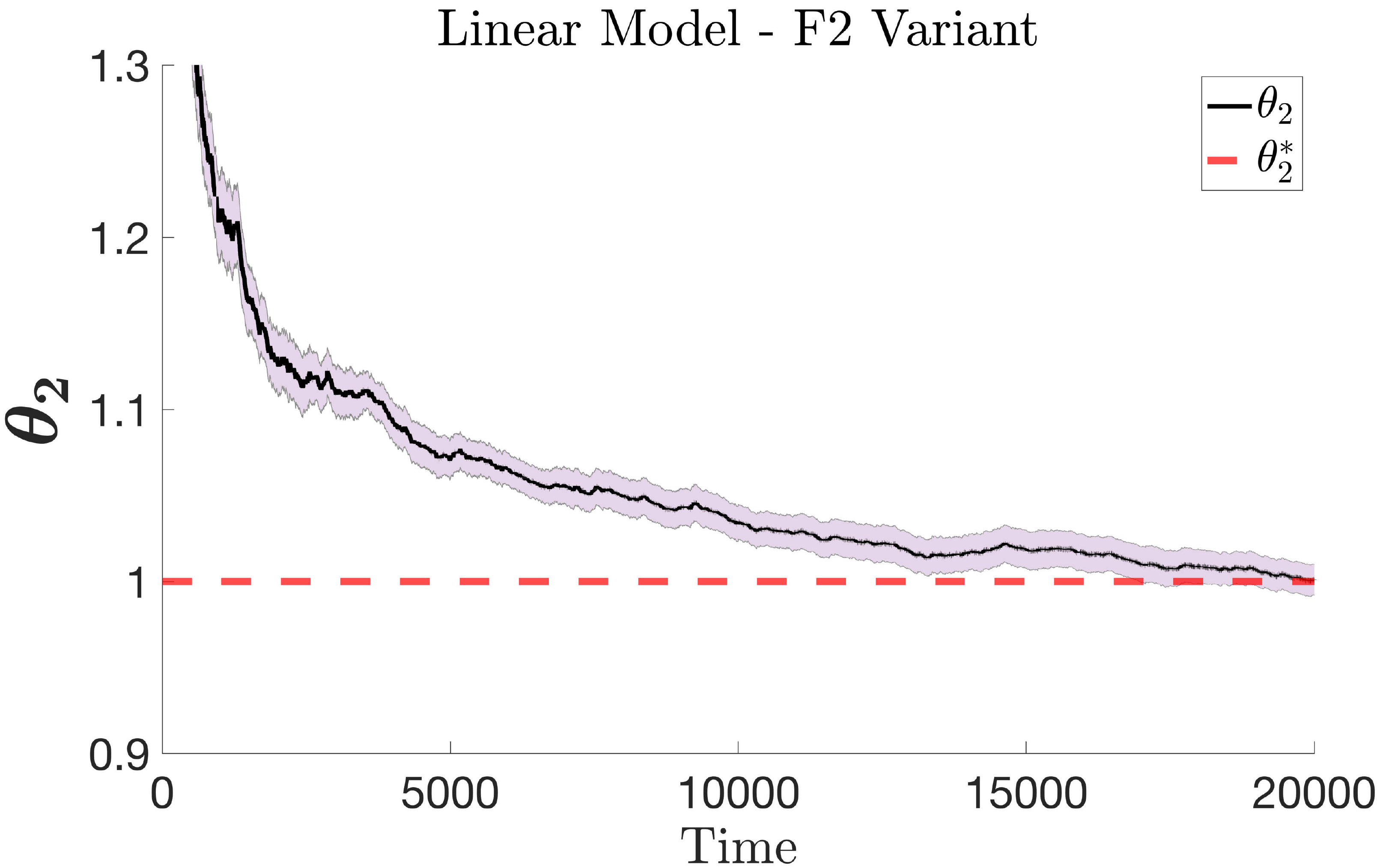}
	\end{subfigure}\\
	\begin{subfigure}[c]{0.49\textwidth}
	\includegraphics[width=1.\textwidth]{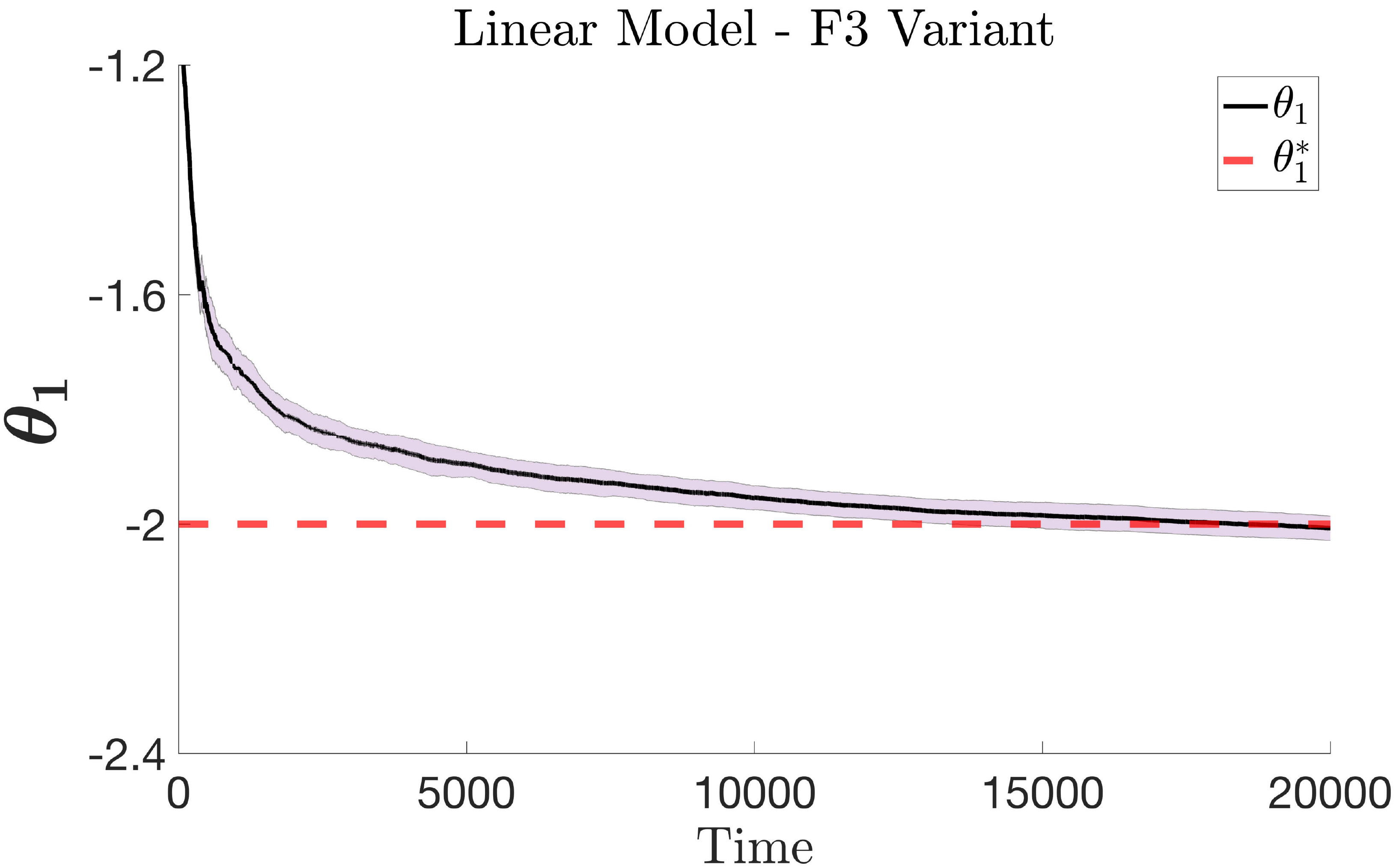}
	\end{subfigure}
	\begin{subfigure}[c]{0.5\textwidth}
	\includegraphics[width=1.\textwidth]{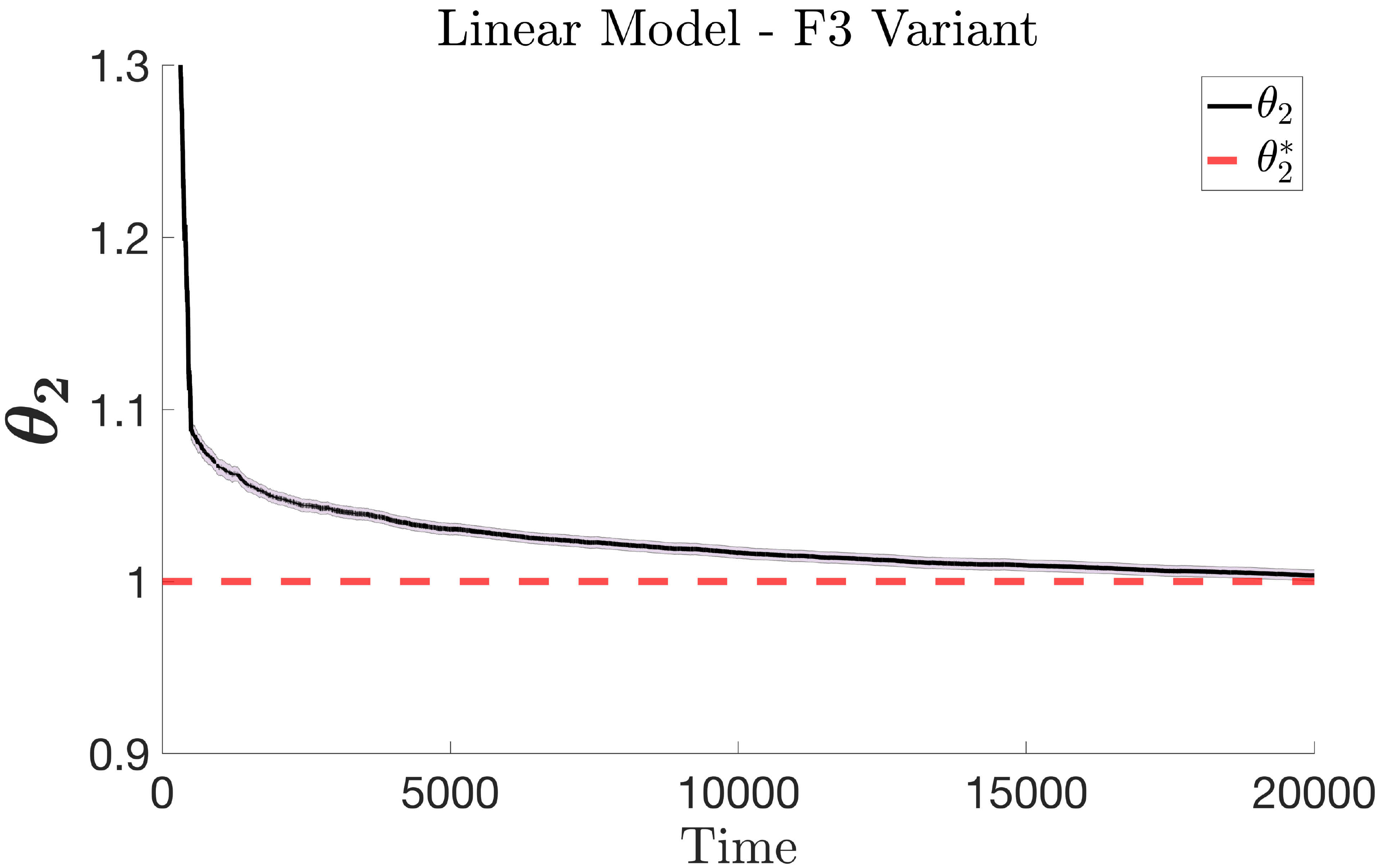}
	\end{subfigure}
 \caption{Linear results: The outcomes of running \autoref{alg:param_est} for the estimation of $(\theta_1,\theta_2)$ in the cases \textbf{(F1)} (top), \textbf{(F2)} (middle) and \textbf{(F3)} (bottom). The black curve is the average of 6 independent runs and the shaded area is the mean $\pm$ the standard deviation. The initial values of the parameters are $(-1, 2)$. The dashed lines represent the true parameters values $(\theta^*_1,\theta^*_2) = (-2,1)$. In all cases we take $b_t=t^{-0.1}$ for all $t\in\mathbb{N}$. In \textbf{(F1)} \& \textbf{(F2)} cases, we set $a_t=0.02$ when $t\leq 50$ and $a_t = t^{-0.75}$ (for $\theta_1$), $a_t = t^{-0.82}$ (for $\theta_2$) when $t>50$. In \textbf{(F3)} case, we set $a_t=0.02$ when $t\leq 500$ and $a_t =t^{-0.88}$ (for $\theta_1$), $a_t = 0.2~t^{-0.95}$ (for $\theta_2$) when $t>500$. }
    \label{fig:param_est_Lin}
\end{figure} 

\begin{figure}[!htb]
	\begin{subfigure}[c]{0.33\textwidth}
	\includegraphics[width=1.\textwidth]{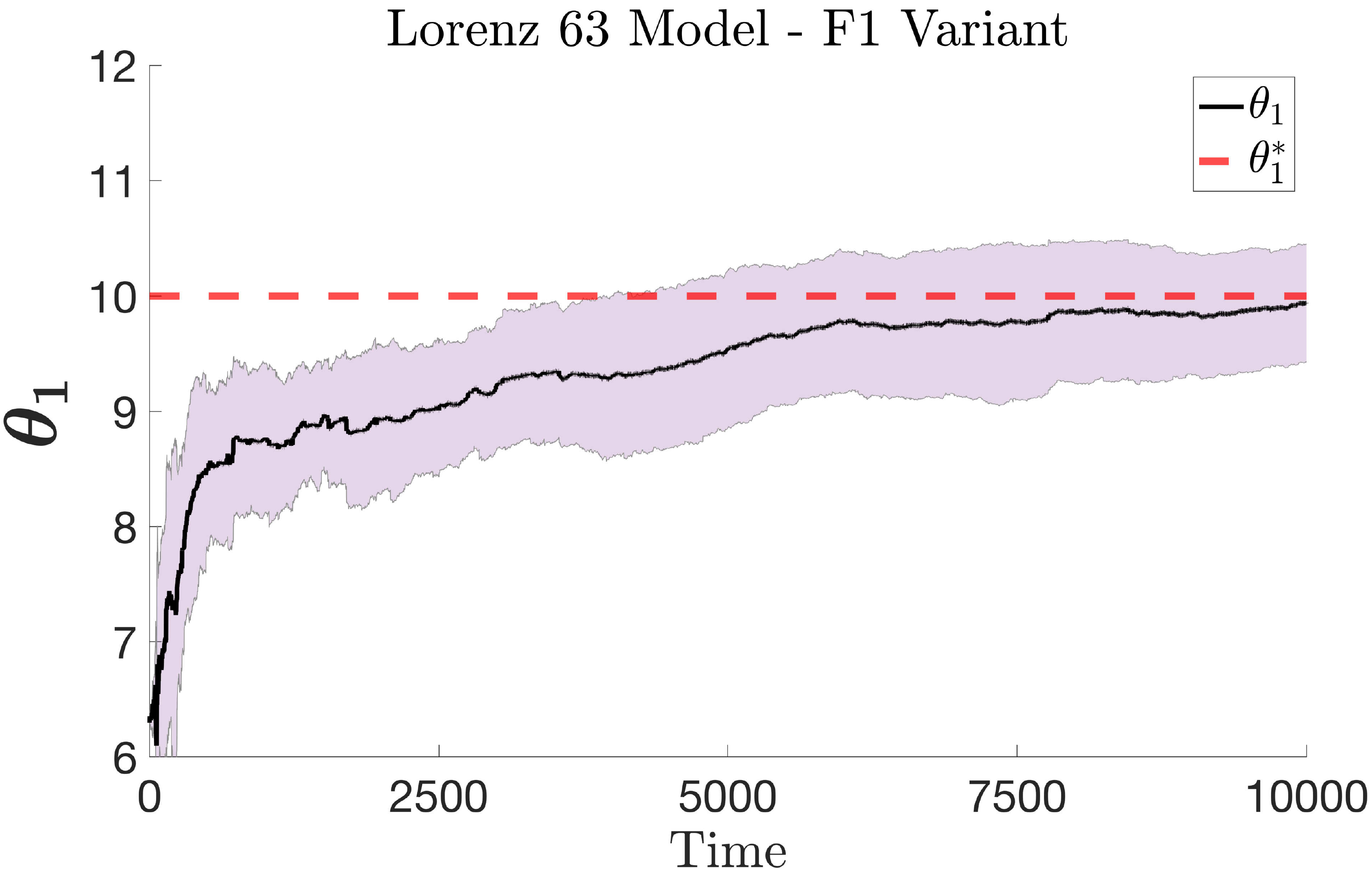}
	\end{subfigure}
	\begin{subfigure}[c]{0.33\textwidth}
	\includegraphics[width=1.\textwidth]{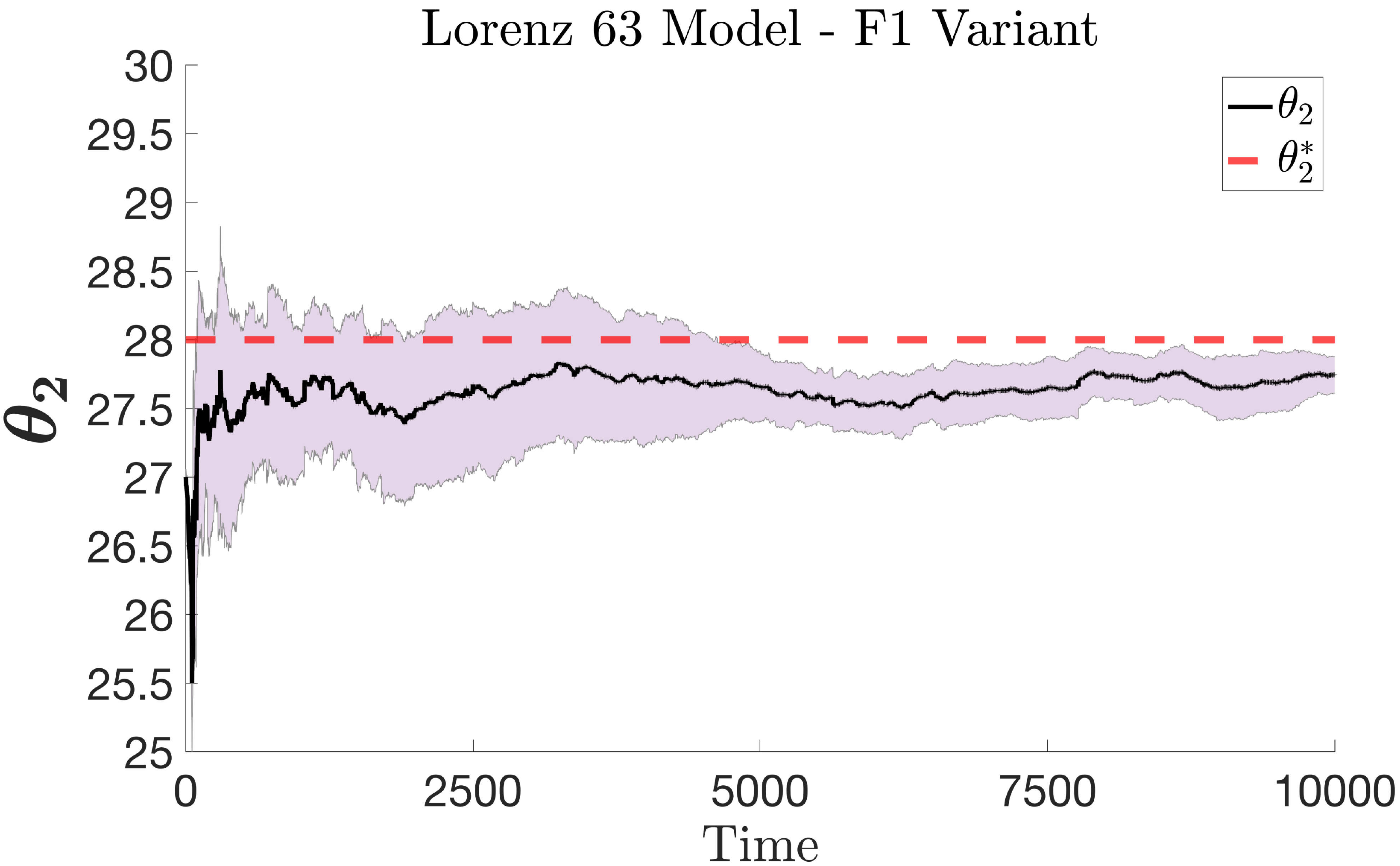}
	\end{subfigure}
	\begin{subfigure}[c]{0.33\textwidth}
	\includegraphics[width=1.\textwidth]{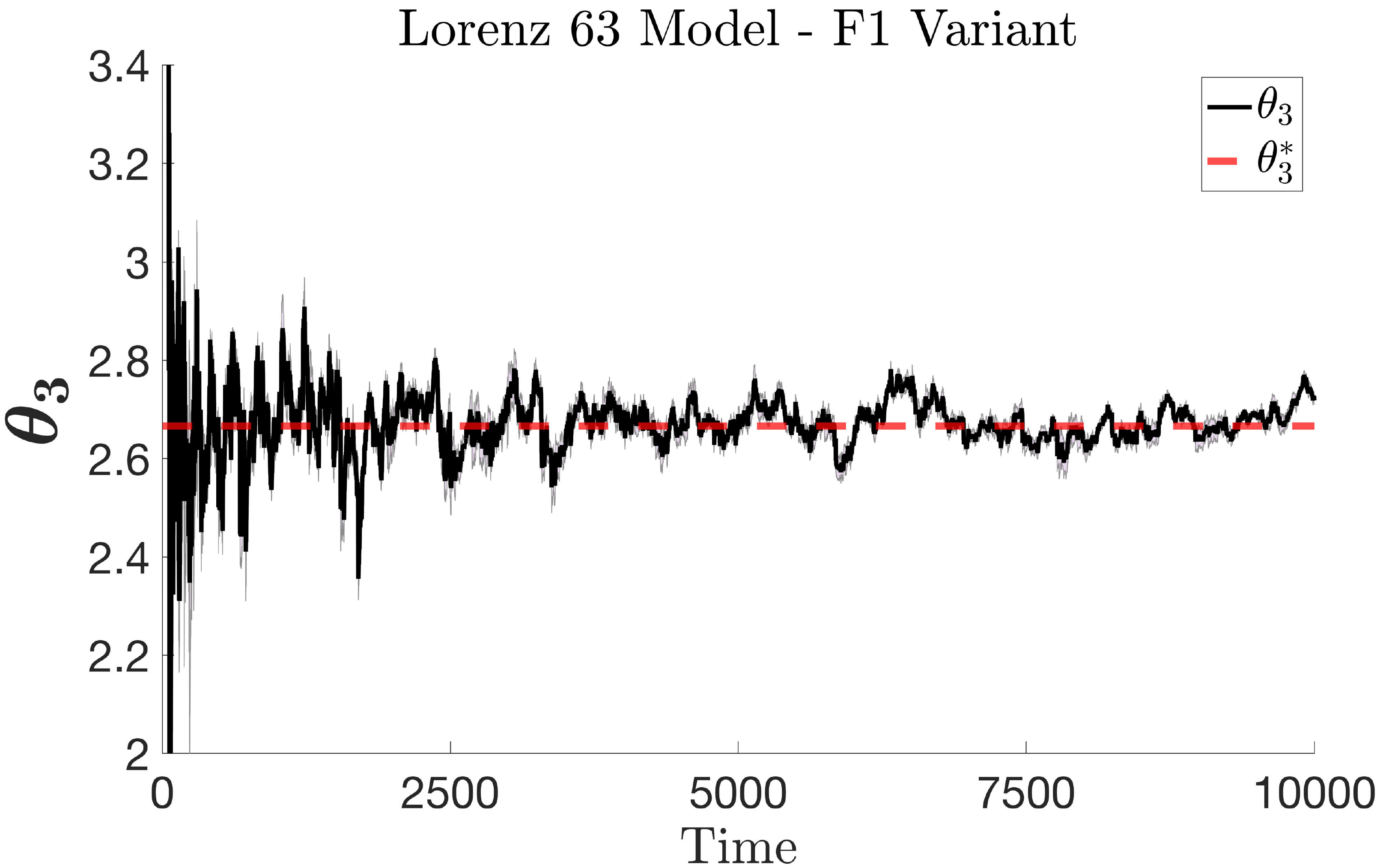}
	\end{subfigure}\\
	\begin{subfigure}[c]{0.33\textwidth}
	\includegraphics[width=1.\textwidth]{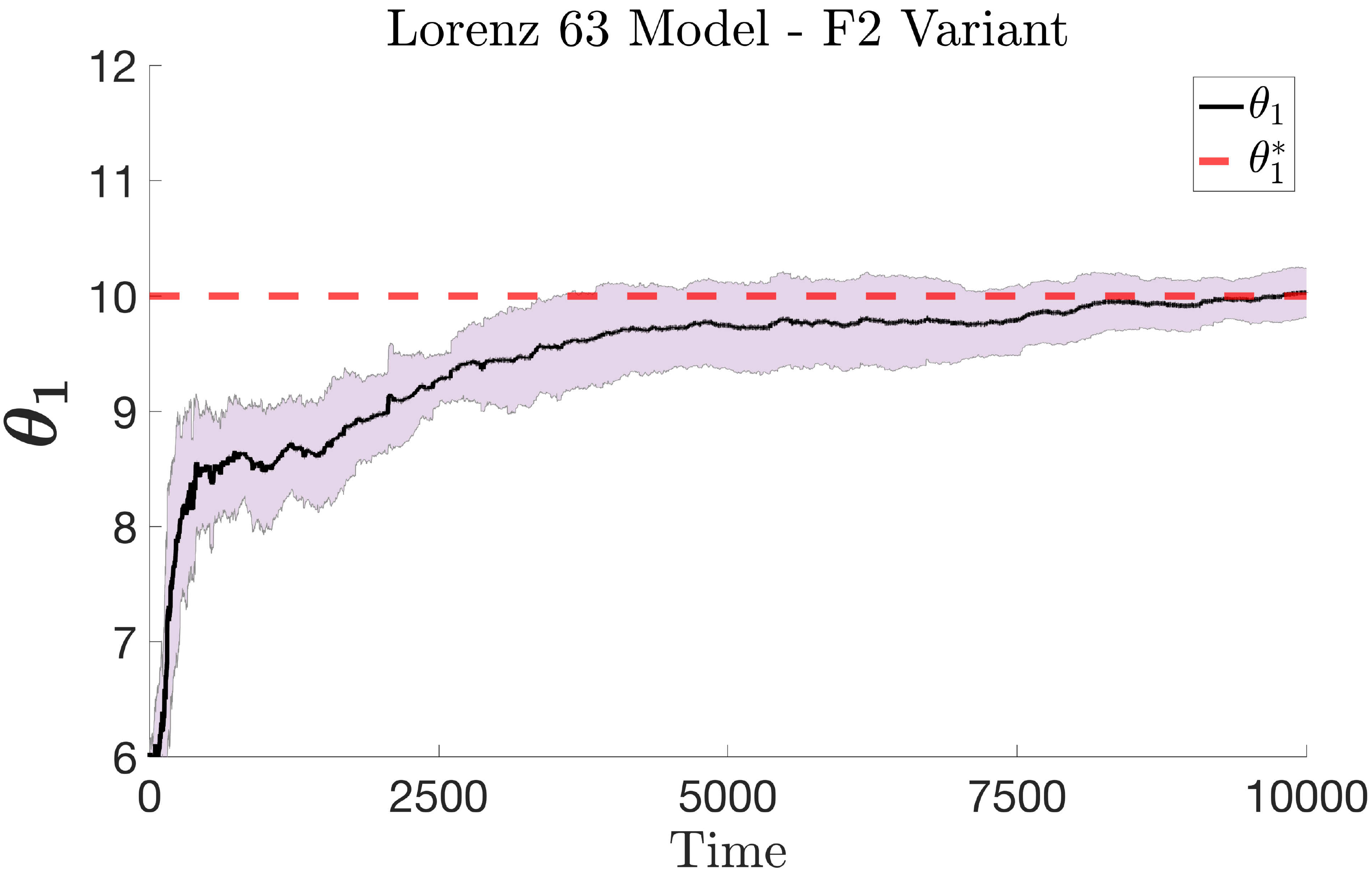}
	\end{subfigure}
	\begin{subfigure}[c]{0.33\textwidth}
	\includegraphics[width=1.\textwidth]{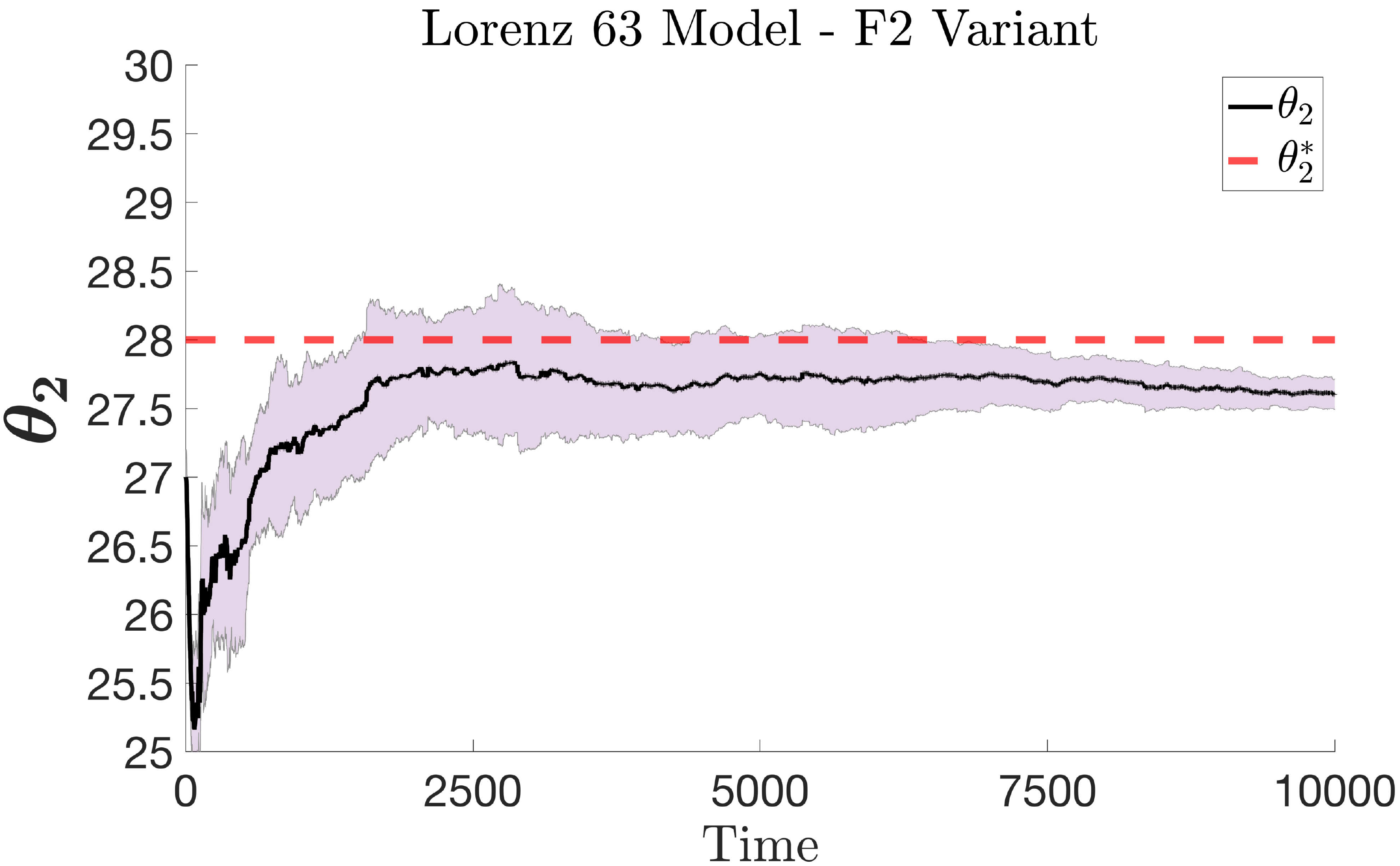}
	\end{subfigure}
	\begin{subfigure}[c]{0.33\textwidth}
	\includegraphics[width=1.\textwidth]{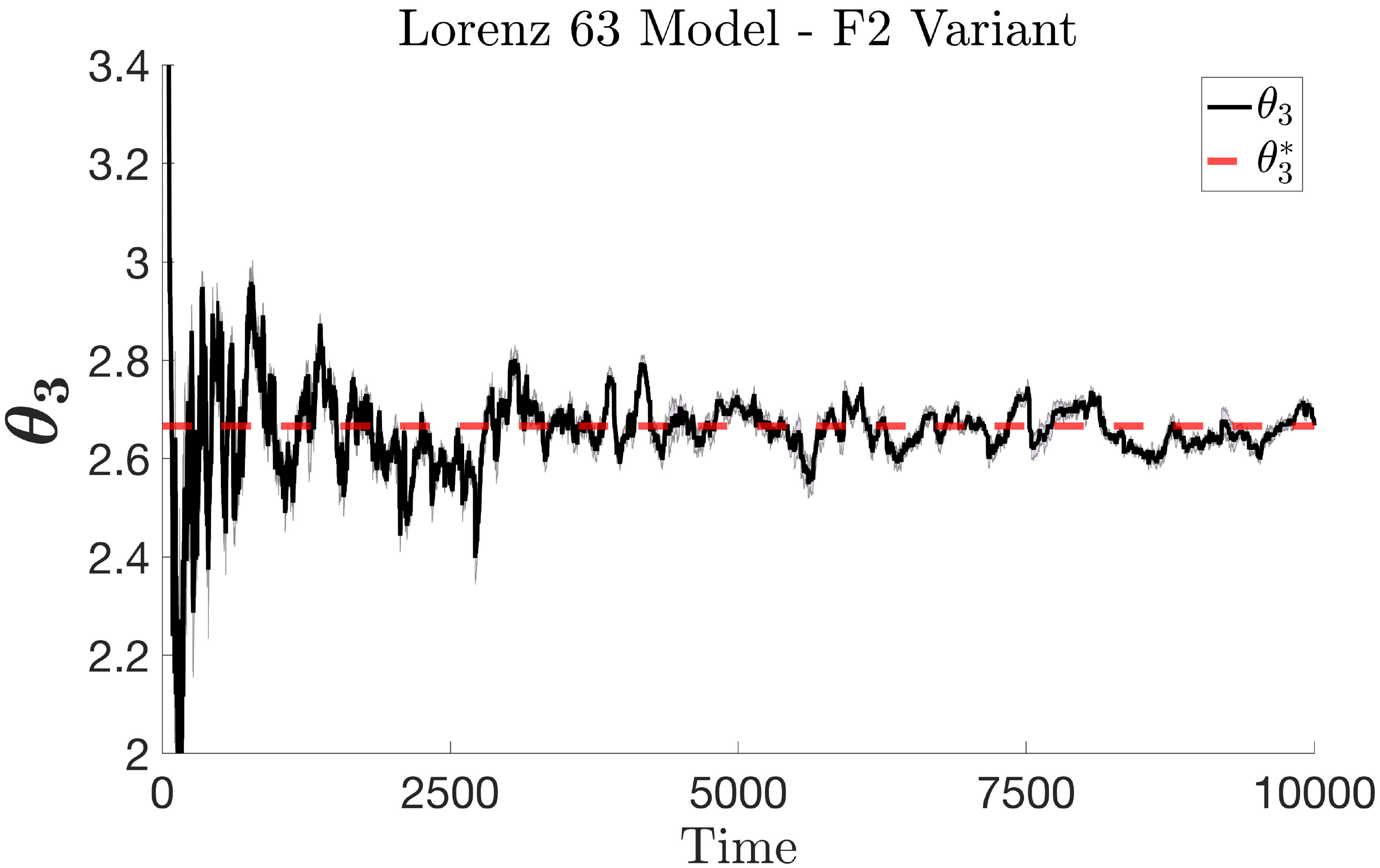}
	\end{subfigure}\\
	\begin{subfigure}[c]{0.33\textwidth}
	\includegraphics[width=1.\textwidth]{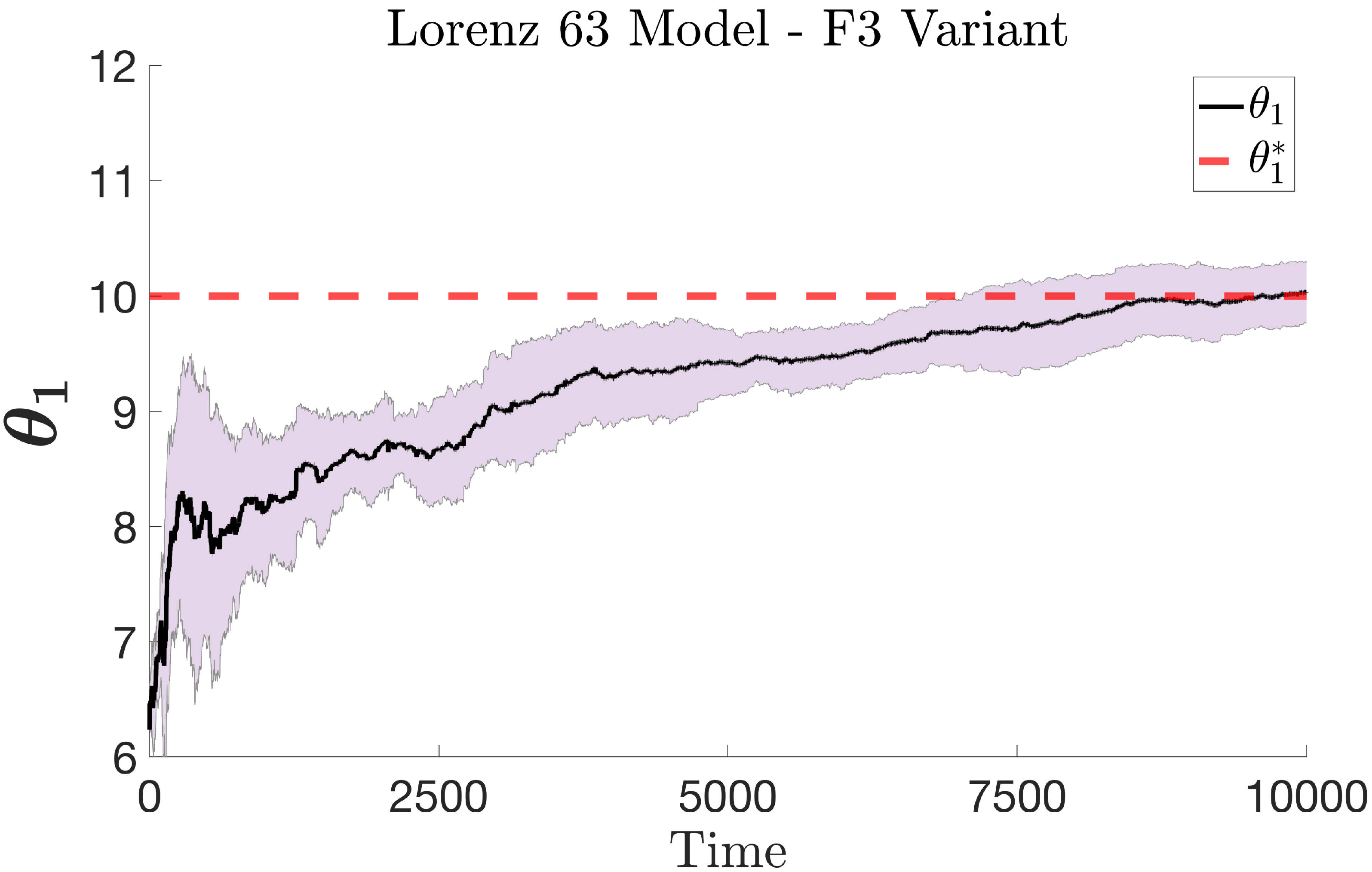}
	\end{subfigure}
	\begin{subfigure}[c]{0.33\textwidth}
	\includegraphics[width=1.\textwidth]{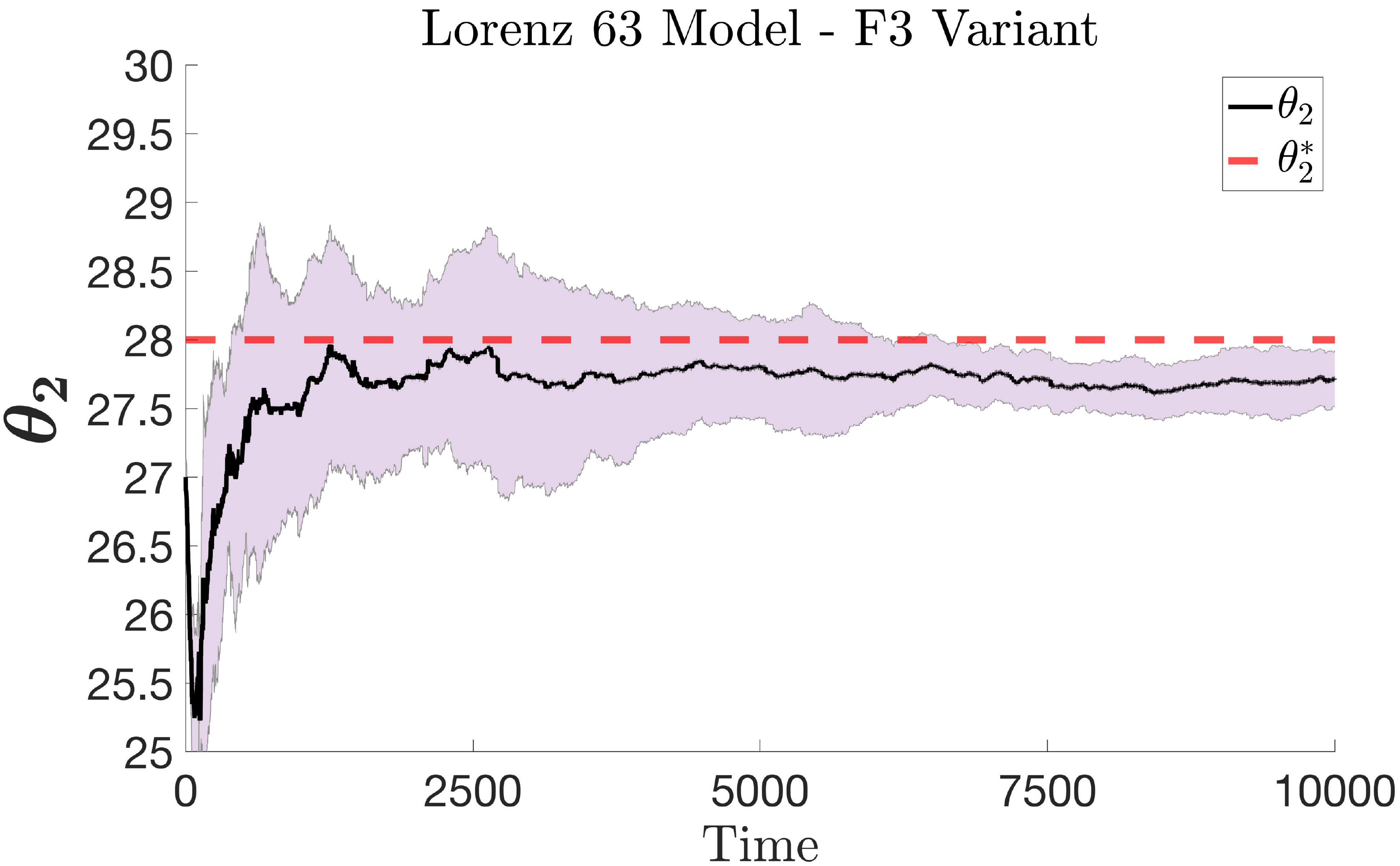}
	\end{subfigure}
	\begin{subfigure}[c]{0.33\textwidth}
	\includegraphics[width=1.\textwidth]{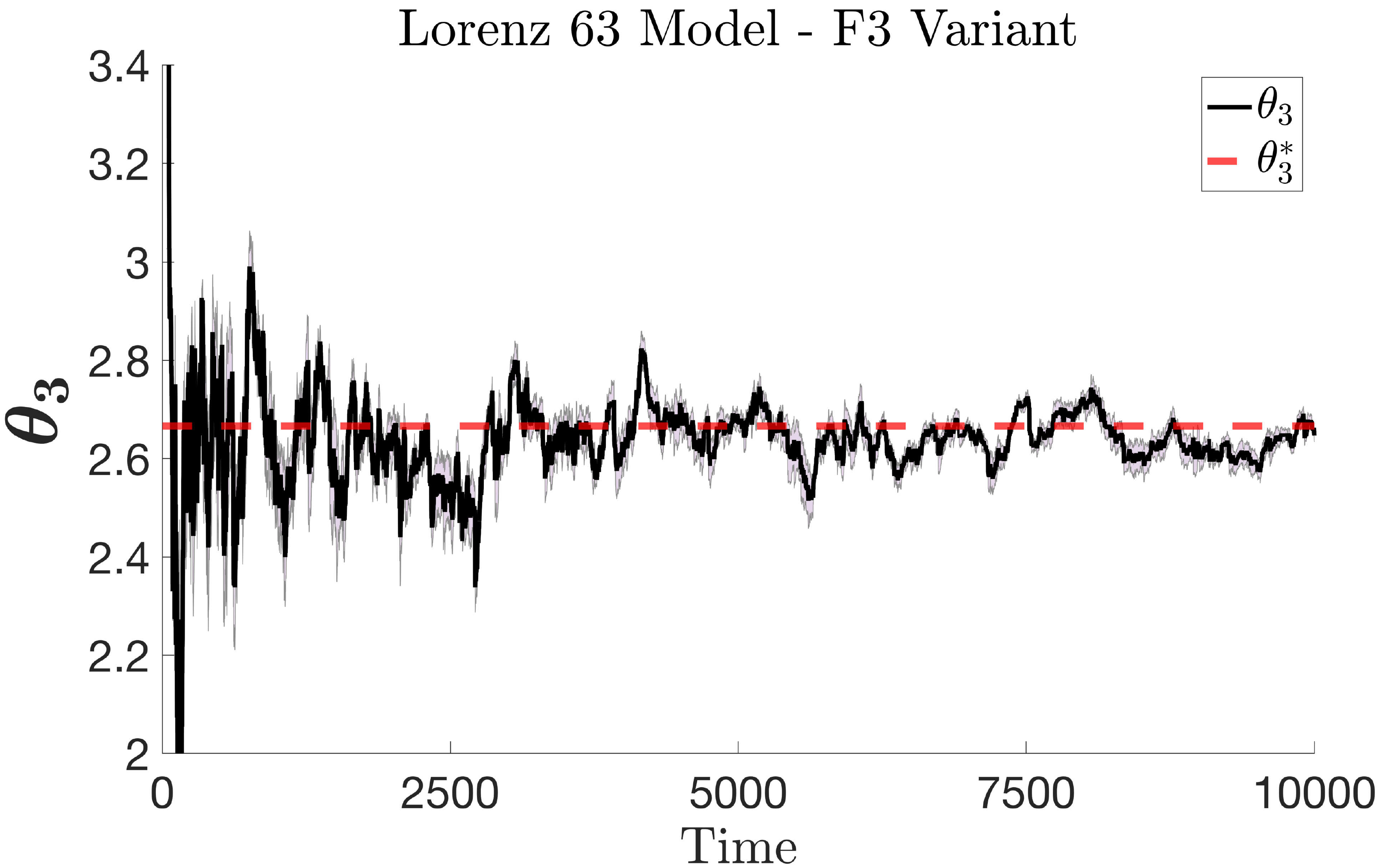}
	\end{subfigure}
    \caption{Lorenz 63 results: The outcomes of running \autoref{alg:param_est} for the estimation of $(\theta_1,\theta_2, \theta_3)$ in the cases \textbf{(F1)} (top), \textbf{(F2)} (middle) and \textbf{(F3)} (bottom). The black curve is the average of 6 independent runs and the shaded area is the mean $\pm$ the standard deviation. The initial values of the parameters are $(6, 27, 6.5)$. The dashed lines represent the true parameters values $(\theta^*_1,\theta^*_2, \theta_3^*) = (10,28,8/3)$. The green horizontal line represent the true parameters values $(\theta^*_1,\theta^*_2, \theta_3^*) = (10,28,8/3)$. In all cases, we set $b_t=t^{-0.1}$ for all $t\in\mathbb{N}$, $a_t = 0.01$ when $t\leq 100$ and $a_t=t^{-0.75}$ for $t>100$.}
    \label{fig:param_est_L63}
\end{figure} 

\subsubsection{Stochastic Lorenz 63 model}
\label{subsec:L63_model}

Our next example is the Lorenz 63 model \cite{ENL63} with $d_x=d_y=3$, which is a model for atmospheric convection. The model is based on three ordinary differential equations, where now we have three parameters of interest to estimate, i.e. $(\theta_1,\theta_2,\theta_3) \in \mathbb{R}^3$. The stochastic Lorenz 63 model is given as
\begin{align*}
dX_t &=   f(X_t) dt +  Q^{1/2} dW_t, \\
dY_t &=  C X_t dt + R^{1/2} dV_t,   \\
\end{align*}
such that 
\begin{align*}
f_1(X_t) &= \theta_1(X_t(2) - X_t(1)), \\
f_2(X_t) &= \theta_2X_t(1) - X_t(2) - X_t(1)X_t(3), \\
f_3(X_t) & = X_t(1)X_t(2)-\theta_3X_t(3),
\end{align*}
where $X_t(i)$ is the $i^{th}$ component of $X_t$. Furthermore we have that $Q^{1/2} = Id$ and the variable $C$ is specified as
$$
C = 
\begin{cases}
\frac{1}{2}, \quad &\mathrm{if} \ i=j,\\
\frac{1}{2}, \quad &\mathrm{if} \ i=j-1, \quad j \in \{1,2,3\},\\
0, \quad &\mathrm{otherwise},
\end{cases}
$$
and $(R^{1/2})_{ij}=2~q(\frac{2}{5}\min\{|i-j|,r_2-|i-j| \}), \ i,j \in \{1,2,3\}$ such that 
$$
q(x) = 
\begin{cases}
1-\frac{3}{2}x+\frac{1}{2}x^3, \quad &\mathrm{if} \ 0 \leq x \leq 1,\\
0, \quad &\mathrm{otherwise}.
\end{cases}
$$
In \autoref{fig:param_est_L63}, we show the results for the parameters estimation of $(\theta_1,\theta_2,\theta_3)$ using the cases \textbf{(F1)} - \textbf{(F3)}. We set the target level to be $L=9$, the start level $l_*=7$, and specify the initial state $X_0\sim \mathcal{N}(\textbf{1},0.5~Id)$. The number of samples on each level are the same as in \eqref{eq:num_of_samples}. From \autoref{fig:param_est_L63} it is clear that all variants of the MLEnKBF perform similarly for this inference problem. Interestingly we notice that the learning of the parameter $\theta^*_3 = 8/3$ seems the most accurate, while taking the least amount of time to reach the value of $\theta^*_3$. This is unlike the learning of the other parameters, which require at least a time of $T=5000$ to get close to $\theta^*_1$ and $T=3000$ to get close to $\theta^*_2$. Part of the reason for this could be that $\theta_3$ acts as a coefficient only for $X_t(3)$, unlike for $X_t(1)$ which depends on both $\theta_1$ and $\theta_2$. In addition, we see that the learning of $\theta^*_2$ is biased towards 27.6 in all variants.

\begin{figure}[!htb]
	\begin{subfigure}[c]{0.33\textwidth}
	\includegraphics[width=1.\textwidth]{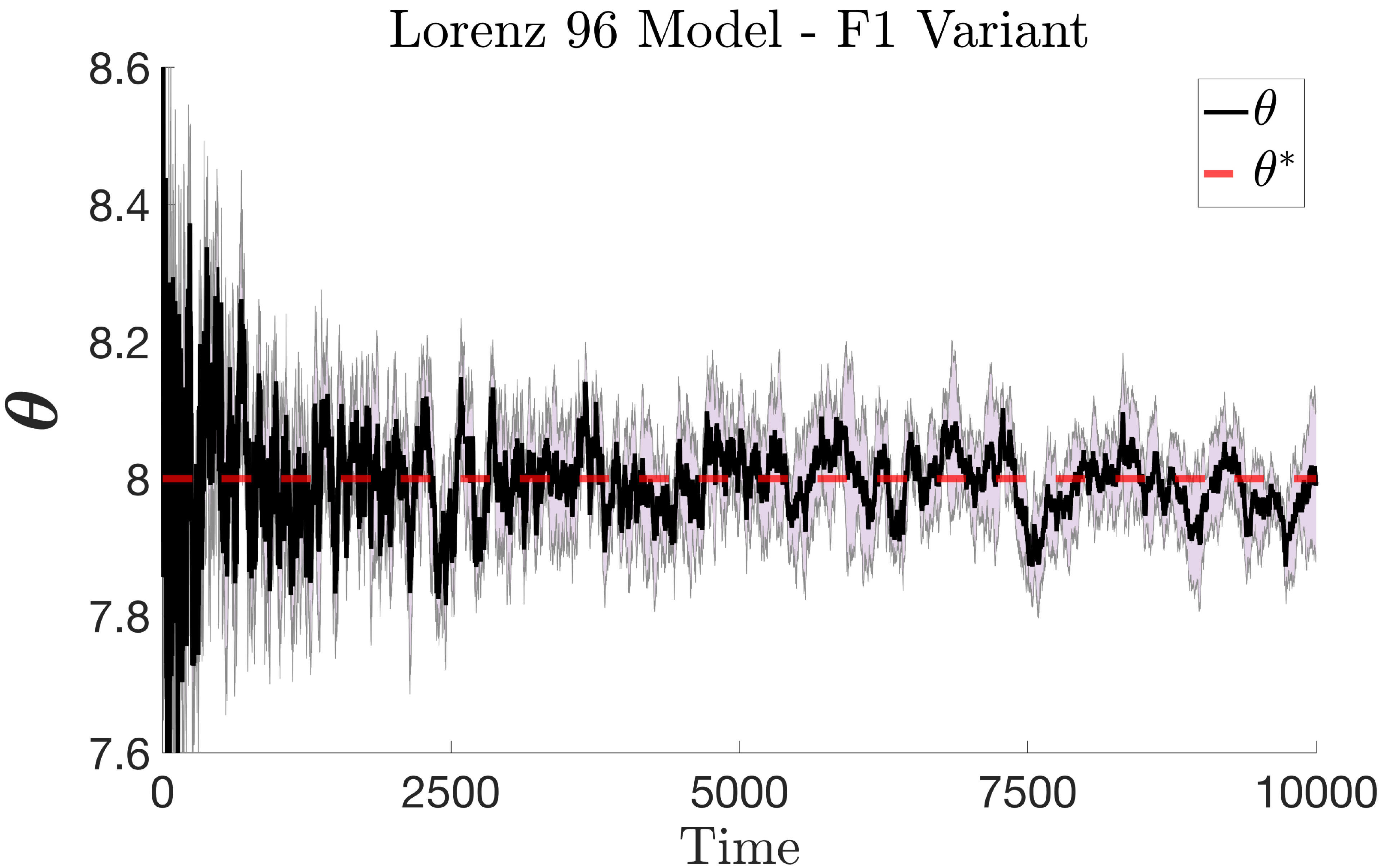}
	\end{subfigure}
	\begin{subfigure}[c]{0.33\textwidth}
	\includegraphics[width=1.\textwidth]{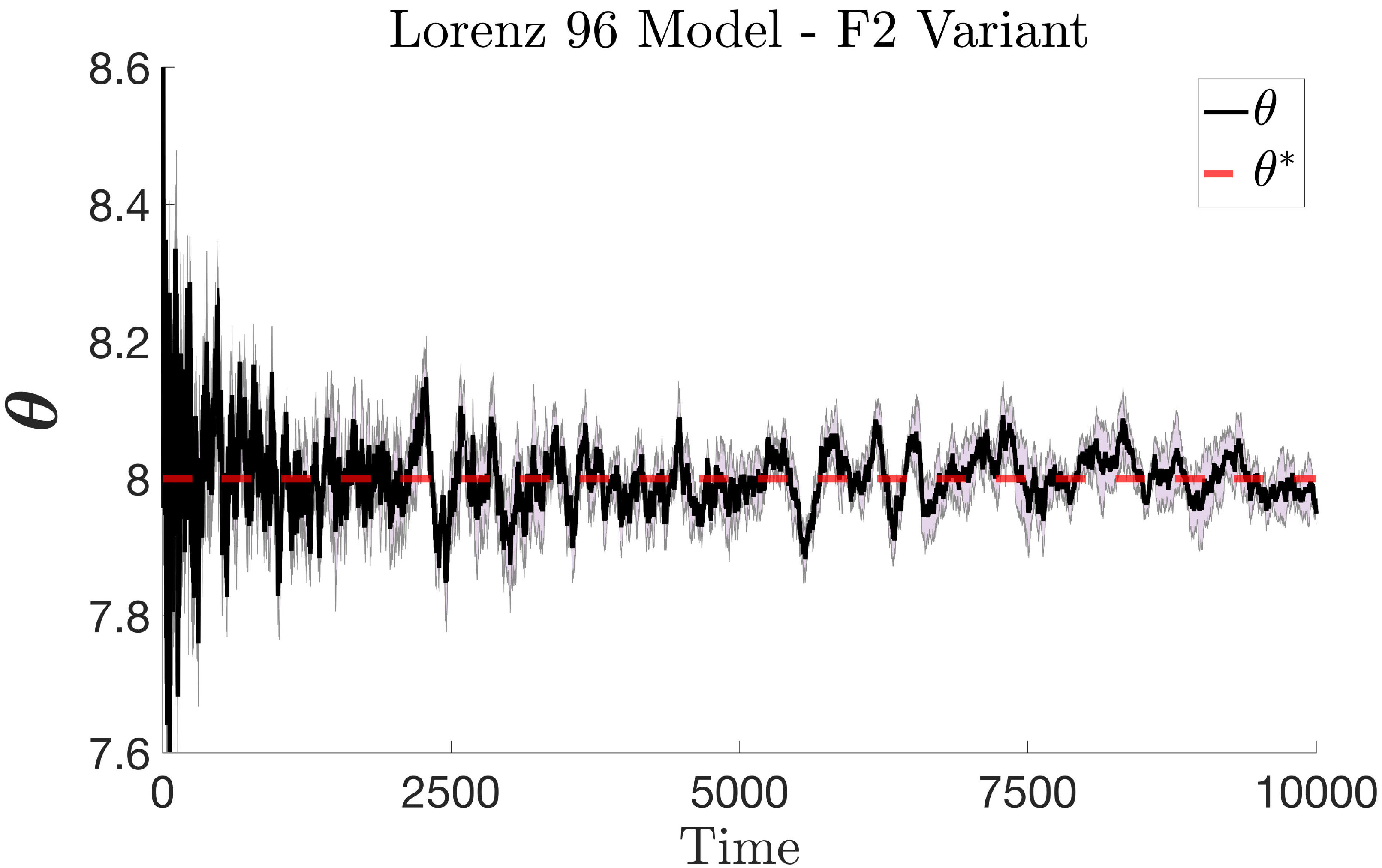}
	\end{subfigure}
	\begin{subfigure}[c]{0.33\textwidth}
	\includegraphics[width=1.\textwidth]{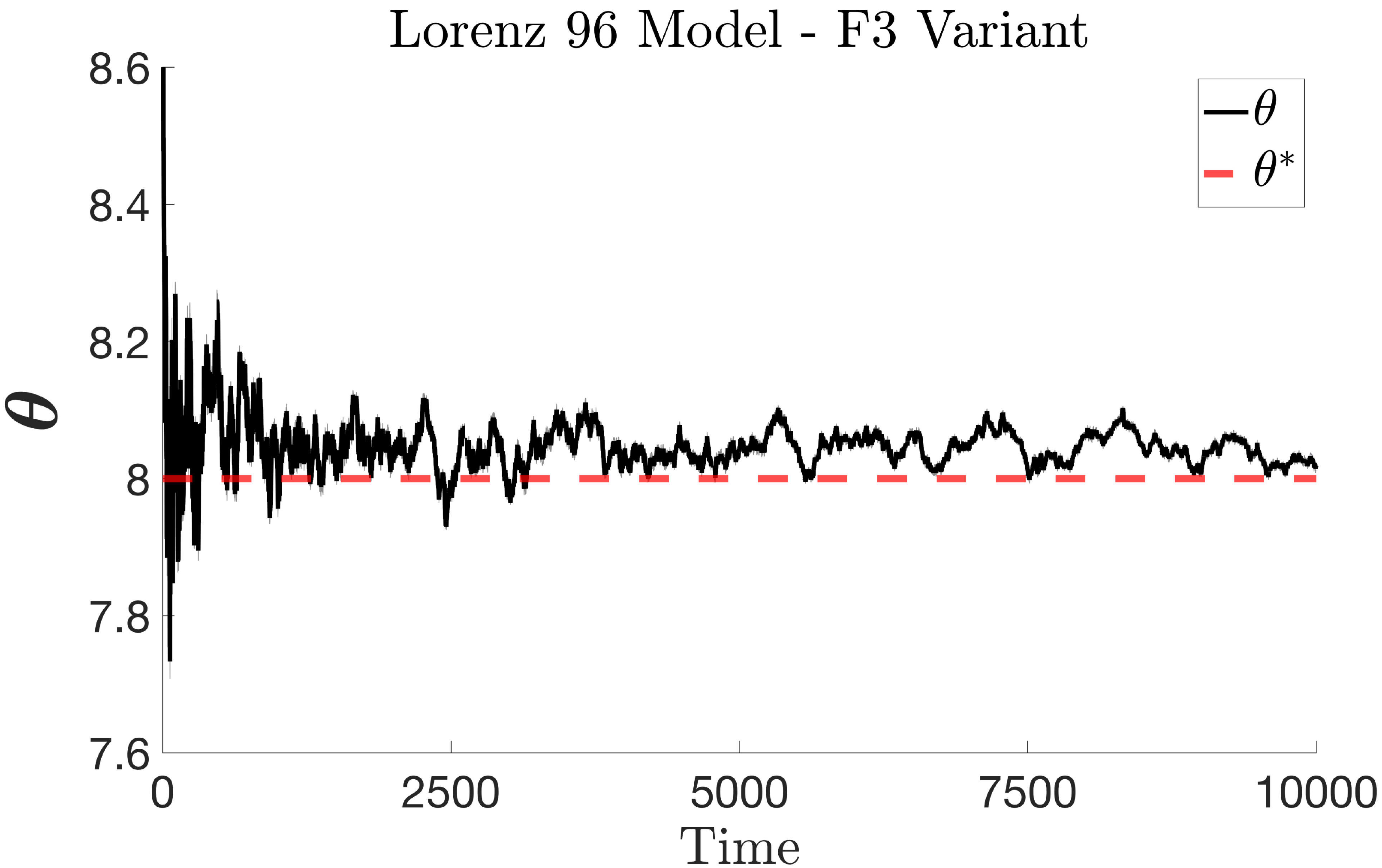}
	\end{subfigure}
    \caption{Lorenz 96 results: The outcomes of running \autoref{alg:param_est} for the estimation of $\theta$ in the cases \textbf{(F1)} (left), \textbf{(F2)} (middle) and \textbf{(F3)} (right). The black curve is the average of 6 independent runs and the shaded area is the mean $\pm$ the standard deviation. The initial value of $\theta$ is 10. The dashed line represents the true value $\theta^* = 8$. In all cases, we set $b_t=t^{-0.1}$ for all $t\in\mathbb{N}$, $a_t = 0.03$ when $t\leq 50$ and $a_t=t^{-0.75}$ for $t>50$.}
    \label{fig:param_est_L96}
\end{figure}

\subsubsection{Stochastic Lorenz 96 model}
Our final test model will be the Lorenz 96 model \cite{ENL96} with $d_x=d_y=40$, which is a dynamical system designed to describe equatorial waves in atmospheric science. The stochastic Lorenz 96 model takes the form
\begin{align*}
dX_t &=   f(X_t) dt +  Q^{1/2} dW_t, \\
dY_t &=  C X_t dt + R^{1/2}dV_t,  
\end{align*}
such that 
\begin{equation*}
f_i(X_t) = (X_t(i+1)-X_t(i-2))X_t(i-1)-X_t(i)+\theta,
\end{equation*}
where again $X_t(i)$ is the $i^{th}$ component of $X_t$, and we assume that $X_t(-1) = X_t(d_x-1)$, $X_t(0) = X_t(d_x)$ and that $X_t(d_x+1) = X_t(1)$. We specify our parameter values as $Q^{1/2} = \sqrt{2}~Id$ and $R^{1/2} = 0.5~ Id$. The parameter $\theta$ is the external force in the system, while $(X_t(i+1)-X_t(i-2))X_t(i-1)$ is the advection term and $-X_t(i)$ is the damping term. In \autoref{fig:param_est_L96}, we show the results for the parameter estimation of $\theta$ using the cases \textbf{(F1)}-\textbf{(F3)}. We set the target level to be $L=9$, and the start level $l_*=7$. In \textbf{(F1)} and \textbf{(F2)}, we specify the initial state as follows: We set $X_0(1) = 8.01$ and $X_0(i)=8$ for $1<i\leq d_x$. In \textbf{(F3)} case, to prevent the matrix $P_0^{N,l}$ from being
equal to zero, we set $X_0 \sim \mathcal{N}(8~\textbf{1},0.05~Id)$. The number of samples on each level are the same as in \eqref{eq:num_of_samples}. As we observe from \autoref{fig:param_est_L96}, all MLEnKBF variants learn the true value of $\theta^*=8$. The fluctuations again can be seen more clearly in the first two variants, as they are stochastic, i.e. contain either or both of $Q$ and $R$. This is not the case for \textbf{(F3)} whose dynamics are completely deterministic. In general the results are more stable that those conducted for the Lorenz 63 model, as the function $f:\mathbb{R}^{d_x} \rightarrow \mathbb{R}^{d_x}$, does not contain $\theta$ as a coefficient of the process $X_t$. 

\section{Conclusion}
\label{sec:conc}

The purpose of this work was to apply MLMC strategies for normalizing constant (NC) estimation. In particular our aim was to extend the work of \cite{CDJ21}, which used EnKBF, to its multilevel counterpart which is the MLEnKBF \cite{CJY20}. As stated, our motivation is primarily in the linear setting, for which the optimal filter is the Kalman--Bucy filter, where we could provide propagation of chaos-type results. This has the advantage of being more computationally feasible than using other filtering strategies such as the particle filter or SMC methods. From our results, we analyzed and provided various $\mathbb{L}_q$ bounds associated to the i.i.d. normalizing constant estimator. This was for both the single-level and multilevel cases. Numerical experiments were conducted which showed the improvement, related to cost, of adopting the multilevel methodology for the EnKBF. This was further tested on various models for parameter estimation, where we compared different variants of MLEnKBFs, introduced earlier on the stochastic Lorenz 63 and 96 models.

This work naturally leads to different fruitful and future directions of work to consider. The first potential direction is to aim to provide the same analysis for other MLEnKBFs, such as the deterministic and deterministic-transport EnKBF \cite{BD20}. This is of interest as from the numerical results, they suggest alternative rates for \textbf{(F3)}, different to that of the other variants. One could also provide an unbiased estimation of the NC \cite{RJP18}, which has connections to MLMC \cite{CFJ20,MV18}. In order to do so, one would require a modified multilevel analysis of the EnKBF, where one has uniform upper bounds, with respect to levels $l=1,\ldots, L$. As stated, a comparison
of both normalizing estimators using both the MLEnKBF and MLPF would be interesting, but to make such a comparison, one could exploit advanced methodologies, such as in \cite{BCJKR20}, and appropriate examples in low dimensions. Finally it would be of interest to develop theory for the multilevel estimator $\overline{U}^{ML}_t$. We have only tested this computationally which seems to match the rates attained for the i.i.d. (ideal) ML estimator. To do so, one requires more sophisticated mathematics to hold initially for the problem-setting in \cite{CJY20}.

\section*{Appendix}
\label{a:appendix}

\appendix

 \section{Analysis for EnKBF NC Estimator}
 \label{Appendix:A}

{
For the appendices, \autoref{Appendix:A} will cover the propagation of chaos result, which is required for the variance of the single-level EnKBF NC estimator. We will then proceed to  \autoref{Appendix:B} which discusses various discretization biases of the diffusion process related to both EnKBF and the NC estimator. Finally our main theorem is proved in  \autoref{Appendix:C}. All of our results will be specific to the vanilla variant of the ENKBF, \textbf{F(1)}.

Before proceeding to our results, we will introduce the following assumptions which will hold from herein, but not be added to any statements. 
For a square matrix, $B$ say, we denote by $\mu(B)$ as the maximum eigenvalue of $\textrm{Sym}(B)$.
\begin{enumerate}
\item{We have that $\mu(A)<0$.}
\item{There exists a $\mathsf{C}<+\infty$ such that for any $(k,l)\in\mathbb{N}_0^2$ we have that 
\begin{equation}\label{eq:bound_deter_cov}
\max_{(j_1,j_2)\in\{1,\dots,d_x\}^2}|P_{k\Delta_l}(j_1,j_2)|\leq\mathsf{C}.
\end{equation}
}
\end{enumerate}
We note that 1.~is typically used in the time stability of the hidden diffusion process $X_t$, see for instance \cite{DT18}. In the case of 2.~we expect that it can be verified under 1.,
that $S=\mathsf{C}I$ with $\mathsf{C}$ a positive constant and some controllability and observability assumptions (e.g.~\cite[eq.~(20)]{DT18}). Under such assumptions, the Riccati equation has a solution and moreover, by \cite[Proposition 5.3]{DT18} $\mathcal{P}_t$ is exponentially stable w.r.t.~the Frobenius norm; so that this type of bound exists in continuous time.}

Throughout the appendix we will make use of the $C_q-$inequality. For two real-valued random variables
$X$ and $Y$ defined on the same probability space, with expectation operator $\mathbb{E}$, suppose that for some fixed $q\in(0,\infty)$, $\mathbb{E}[|X|^q]$
and $\mathbb{E}[|Y|^q]$ are finite, then the $C_q-$inequality is
$$
\mathbb{E}[|X+Y|^q] \leq \mathsf{C}_q\Big(\mathbb{E}[|X|^q]+ \mathbb{E}[|Y|^q]\Big),
$$
where $\mathsf{C}_q=1$, if $q\in(0,1)$ and $\mathsf{C}_q=2^{q-1}$ for $q\in[1,\infty)$.
\\\\
In order to verify some of our claims for the analysis, we will rely on various results derived in \cite{CJY20}. For convenience-sake we will state these below, which are concerned with various $\mathbb{L}_q-$bounds, where


\begin{lem}
\label{lem:y_inc}
For any $q\in[1,\infty)$ there exists a $\mathsf{C}<+\infty$ such that for any $(k,l,j)\in\mathbb{N}_0^2\times\{1,\dots,d_y\}$:{
$$
\mathbb{E}[|[Y_{(k+1)\Delta_l}-Y_{k\Delta_l}](j)|^q]^{1/q} \leq \mathsf{C}\Delta_l^{1/2}.
$$}
\end{lem}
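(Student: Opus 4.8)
\textbf{Proof plan for Lemma \ref{lem:y_inc}.}

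The plan is to use the explicit representation of the observation process and standard Gaussian moment bounds. From the model \eqref{eq:data}, for any $k\in\mathbb{N}_0$ and $l\in\mathbb{N}_0$ we may write
$$
Y_{(k+1)\Delta_l}-Y_{k\Delta_l} = \int_{k\Delta_l}^{(k+1)\Delta_l} CX_s\,ds + R^{1/2}\big[V_{(k+1)\Delta_l}-V_{k\Delta_l}\big].
$$
Projecting onto the $j$-th coordinate and applying the $C_q$-inequality, it suffices to bound separately the $\mathbb{L}_q$-norm of the drift integral term and that of the Brownian increment term. For the Brownian part, $R^{1/2}[V_{(k+1)\Delta_l}-V_{k\Delta_l}](j)$ is a centred Gaussian random variable with variance of order $\Delta_l$ (the constant depending only on the entries of $R^{1/2}$, which are fixed), so its $\mathbb{L}_q$-norm is bounded by $\mathsf{C}\Delta_l^{1/2}$ for a constant depending only on $q$ and $R$.

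For the drift term, I would first apply Jensen's (or Minkowski's integral) inequality to pull the $\mathbb{L}_q$-norm inside the time integral, giving
$$
\mathbb{E}\Big[\Big|\int_{k\Delta_l}^{(k+1)\Delta_l}(CX_s)(j)\,ds\Big|^q\Big]^{1/q} \leq \int_{k\Delta_l}^{(k+1)\Delta_l}\mathbb{E}\big[|(CX_s)(j)|^q\big]^{1/q}\,ds,
$$
and then use a uniform-in-time $\mathbb{L}_q$-bound on the signal process $\sup_{s\geq 0}\mathbb{E}[\|X_s\|_2^q]^{1/q}<+\infty$. The latter follows from the stability assumption $\mu(A)<0$ (assumption 1 in the appendix), since the Ornstein--Uhlenbeck signal \eqref{eq:signal} is then a stationary-type Gaussian process with uniformly bounded moments; this is the type of estimate recorded in \cite{DT18} and already used in \cite{CJY20}. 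Consequently the integrand is bounded by a constant uniformly in $s$, and the integral over an interval of length $\Delta_l$ contributes a term of order $\Delta_l$, which is $o(\Delta_l^{1/2})$ and hence absorbed into $\mathsf{C}\Delta_l^{1/2}$.

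Combining the two bounds via the $C_q$-inequality yields $\mathbb{E}[|[Y_{(k+1)\Delta_l}-Y_{k\Delta_l}](j)|^q]^{1/q}\leq \mathsf{C}\Delta_l^{1/2}$ with $\mathsf{C}$ depending only on $q$ (and the fixed model parameters $A,C,Q,R$), but crucially not on $k$ or $l$. The only mild subtlety — and the step I would be most careful about — is securing the uniform-in-time moment bound on $X_s$ so that the constant does not degrade as $k\to\infty$; this is exactly where assumption 1 ($\mu(A)<0$) is used, and it is otherwise a routine Gaussian computation. Since $\Delta_l\leq 1$, the $\Delta_l$ term is dominated by $\Delta_l^{1/2}$ and the claimed bound follows.
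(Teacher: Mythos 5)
Your argument is correct. Note that the paper itself states this lemma without proof, importing it from \cite{CJY20}, so there is no in-text proof to compare against; your decomposition of the increment into the drift integral $\int_{k\Delta_l}^{(k+1)\Delta_l}CX_s\,ds$ (order $\Delta_l$ via Minkowski's integral inequality and the uniform-in-time moment bound on $X_s$ afforded by $\mu(A)<0$, cf.\ the use of \cite{DT18} in the proof of Proposition \ref{lem:sl_nc}) plus the Gaussian increment $R^{1/2}[V_{(k+1)\Delta_l}-V_{k\Delta_l}]$ (order $\Delta_l^{1/2}$) is exactly the standard route, and you correctly identify the one point that needs care, namely that the constant must not degrade in $k$, which is where the stability assumption enters.
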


\begin{lem}
\label{lem:mean}
For any $(q,t,k,l)\in[1,\infty)\times\mathbb{N}_0^3$ there exists a $\mathsf{C}<+\infty$ such that for any $(j,N)\in\{1,\dots,d_x\}\times\{2,3,\dots\}$:
$$
\mathbb{E}\Big[\Big|m_{t+k_1\Delta_l}^N(j)-m_{t+k_1\Delta_l}(j)\Big|^q\Big]^{1/q} \leq \frac{\mathsf{C}}{\sqrt{N}}.
$$
\end{lem}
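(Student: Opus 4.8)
The plan is to derive and then analyze a closed linear recursion for the error in the empirical mean. Write $k_1\in\{0,1,\dots,\Delta_l^{-1}\}$ for the sub-step index on level $l$ and set $\delta_{k_1}:=m^N_{t+k_1\Delta_l}-m_{t+k_1\Delta_l}$, where $m^N$ is the ensemble mean associated with \eqref{eq:enkf_is} and $m$ is the discretized Kalman--Bucy mean obeying \eqref{eq:iid_mean_rec}. The first step is to average the particle update \eqref{eq:enkf_is} over $i\in\{1,\dots,N\}$ and subtract \eqref{eq:iid_mean_rec}: the observation-driven terms cancel, $C^{\top}R^{-1}C=S$ reappears, and one is left with
$$
\delta_{k_1+1}=\big(I+A\Delta_l-P_{t+k_1\Delta_l}S\Delta_l\big)\delta_{k_1}+Q^{1/2}\overline{\Delta W}_{k_1}-P_{t+k_1\Delta_l}C^{\top}R^{-1}R^{1/2}\overline{\Delta V}_{k_1},
$$
where $\overline{\Delta W}_{k_1},\overline{\Delta V}_{k_1}$ are the ensemble-averaged Brownian increments over $[t+k_1\Delta_l,t+(k_1+1)\Delta_l]$, each Gaussian with covariance $(\Delta_l/N)I$, mutually independent across $k_1$ and independent of $\mathscr{F}$, and $\delta_0$ is Gaussian with covariance $\mathcal{P}_0/N$ (or $P_t/N$ when $t>0$).

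Next I would iterate this recursion to express $\delta_{k_1}$ as a matrix-weighted linear combination of $\delta_0$ and of the $O(\Delta_l^{-1})$ noise increments, the weights being products of the deterministic matrices $I+A\Delta_l-P_\cdot S\Delta_l$. Under assumption 1 ($\mu(A)<0$) and assumption 2 (uniform boundedness of the deterministic covariances $P_{k\Delta_l}$), each such matrix has operator norm at most $1+\mathsf{C}\Delta_l$, so over the finite horizon any product of at most $(t+1)\Delta_l^{-1}$ of them is bounded by a constant depending only on $(t,l)$. Taking $\mathbb{L}_q$-norms and using Minkowski together with the $C_q$-inequality then reduces everything to (i) $\|\delta_0\|_q=\mathsf{C}N^{-1/2}$, immediate from Gaussianity with covariance $O(1/N)$, and (ii) the $\mathbb{L}_q$-norm of a sum of $O(\Delta_l^{-1})$ independent mean-zero Gaussian vectors each of variance $O(\Delta_l/N)$, which by Burkholder--Davis--Gundy (or simply because the sum is again Gaussian, with total variance $O(\Delta_l^{-1}\cdot\Delta_l/N)=O(1/N)$) is also $\mathsf{C}N^{-1/2}$. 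Combining the two yields $\mathbb{E}[|\delta_{k_1}(j)|^q]^{1/q}\le\mathsf{C}N^{-1/2}$ with $\mathsf{C}=\mathsf{C}(q,t,l)$. A quicker alternative, available because for the i.i.d.\ system $m^N_{t+k_1\Delta_l}\mid\mathscr{F}_{t+k_1\Delta_l}\sim\mathcal{N}(m_{t+k_1\Delta_l},P_{t+k_1\Delta_l}/N)$, is to compute $\mathbb{E}[|\delta_{k_1}(j)|^q]=c_q(P_{t+k_1\Delta_l}(j,j)/N)^{q/2}$ directly and invoke assumption 2; I would likely present this shortcut and keep the recursion argument as a template for the interacting case.

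The step I expect to be the hard part is establishing the statement for the interacting ensemble \eqref{eq:enkbf1}, where the gain uses the sample covariance $P^N_{k\Delta_l}$ rather than the deterministic $P_{k\Delta_l}$. Then the transition matrix $I+A\Delta_l-P^N_{k\Delta_l}S\Delta_l$ is random and the noise term carries a random $P^N$ factor, so the recursion is no longer linear with deterministic coefficients and the conditional-Gaussianity shortcut is unavailable. The remedy is to run a joint induction over $k_1$ on $\mathbb{E}[\|\delta_{k_1}\|_2^q]$ coupled with an $\mathbb{L}_q$-bound $\mathbb{E}[\|P^N_{k\Delta_l}-P_{k\Delta_l}\|^q]^{1/q}=O(N^{-1/2})$ for the covariance error, closed by a discrete Gr\"onwall estimate together with uniform-in-$k$ moment bounds on $\sup_k\|P^N_{k\Delta_l}\|$ of the kind available from the stability analysis in \cite{CJY20,DT18}. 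Propagating the covariance error through the induction without degrading the $N^{-1/2}$ rate is the delicate point; the rate itself is, in either reading, forced by the martingale/Gaussian structure of the ensemble-averaged driving noise.
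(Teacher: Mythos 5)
First, a point of reference: the paper does not actually prove \autoref{lem:mean} --- it is one of the results quoted from \cite{CJY20} (``for convenience-sake we will state these below''), so there is no in-paper proof to compare yours against. Judged on its own terms, the easy half of your proposal is complete and correct: for the auxiliary i.i.d.\ system \eqref{eq:enkf_is} the gain matrix $P_{k\Delta_l}$ is deterministic, the conditional law $m^N_{t+k_1\Delta_l}\mid\mathscr{F}_{t+k_1\Delta_l}\sim\mathcal{N}(m_{t+k_1\Delta_l},P_{t+k_1\Delta_l}/N)$ holds exactly, and the bound \eqref{eq:bound_deter_cov} then gives the $N^{-1/2}$ rate in one line; your recursion $\delta_{k_1+1}=(I+A\Delta_l-P_{\cdot}S\Delta_l)\delta_{k_1}+\text{noise}$ is also derived correctly (the $\Delta Y$ terms do cancel, and the product of $O(\Delta_l^{-1})$ transition matrices of norm $1+\mathsf{C}\Delta_l$ is uniformly bounded).

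The gap is that the version of the lemma the paper actually invokes --- inside the proof of \autoref{lem:sl_nc}, where $\overline{U}^{N,l}$ is built from the sample mean of the \emph{interacting} EnKBF \eqref{eq:enkbf1} (cf.\ \autoref{lem:xi_lq}, which explicitly refers to \eqref{eq:enkbf1}) --- is the interacting one, and there your argument is only a plan. Two ingredients are asserted rather than established. (i) The $\mathbb{L}_q$ propagation-of-chaos bound $\mathbb{E}[\|P^N_{k\Delta_l}-P_{k\Delta_l}\|^q]^{1/q}=O(N^{-1/2})$, which itself needs the particle moment bounds of \autoref{lem:xi_lq} and its own induction; this is precisely the content of \cite{CJY20} and cannot be waved through, since the mean and covariance errors feed into each other. (ii) Subtracting \eqref{eq:iid_mean_rec} from the averaged interacting recursion produces the extra term $(P^N_{k\Delta_l}-P_{k\Delta_l})C^{\top}R^{-1}\big([Y_{(k+1)\Delta_l}-Y_{k\Delta_l}]-Cm_{k\Delta_l}\Delta_l\big)$, whose increments are of size $N^{-1/2}\Delta_l^{1/2}$ in $\mathbb{L}_q$; summed over the $O(\Delta_l^{-1})$ steps a plain triangle inequality only yields $O(N^{-1/2}\Delta_l^{-1/2})$, so one must exploit that this sum is, up to an $O(\Delta_l)$ drift, a martingale and apply Burkholder--Davis--Gundy to recover $O(N^{-1/2})$. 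You gesture at both points and the overall architecture (joint induction on mean and covariance errors closed by a discrete Gr\"onwall bound) is the right one and matches \cite{CJY20}; but as written the proposal proves the lemma only for the i.i.d.\ surrogate and defers the interacting case to exactly the reference the paper already cites.
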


\begin{lem}\label{lem:xi_lq}
For any $(q,k,l)\in(0,\infty)\times\mathbb{N}_0^2$ there exists a $\mathsf{C}<+\infty$ such that for any $N \geq 2$ and  $i\in \{1,\dots,N\}$:
$$
\mathbb{E}[|\xi_{k\Delta_l}^i(j)|^q]^{1/q} \leq \mathsf{C},
$$
where $\xi_{k\Delta_l}$ is defined through \eqref{eq:enkbf1}.
\end{lem}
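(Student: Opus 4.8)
The plan is to prove the bound by induction on $k\in\mathbb{N}_0$. It suffices to treat the range $q\in[1,\infty)$, since the case $q\in(0,1)$ follows from it by Lyapunov's inequality (monotonicity of $\mathbb{L}_q$-norms in $q$); fixing $q\in[1,\infty)$, the inductive statement is that there is a finite $\mathsf{C}=\mathsf{C}_{q,k,l}$, depending on the model parameters, $q$, $k$, $l$ but neither on $N\ge2$ nor on the particle index $i$, with $\max_{j\in\{1,\dots,d_x\}}\mathbb{E}[|\xi_{k\Delta_l}^i(j)|^q]^{1/q}\le\mathsf{C}$. The base case $k=0$ is immediate: $\xi_0^i\sim\mathcal{N}_{d_x}(\mathcal{M}_0,\mathcal{P}_0)$, so each component is a fixed Gaussian and all its moments are finite.

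For the inductive step I would expand the one-step recursion \eqref{eq:enkbf1} and, componentwise, split its right-hand side by the $C_q$-inequality into: the linear drift $(I+A\Delta_l)\xi_{k\Delta_l}^i$; the signal noise $Q^{1/2}[\overline{W}_{(k+1)\Delta_l}^i-\overline{W}_{k\Delta_l}^i]$; the gain against the observation increment $P_{k\Delta_l}^{N}C^\top R^{-1}[Y_{(k+1)\Delta_l}-Y_{k\Delta_l}]$; the gain against the projected state $P_{k\Delta_l}^{N}C^\top R^{-1}C\xi_{k\Delta_l}^i\Delta_l$; and the gain against the perturbation noise $P_{k\Delta_l}^{N}C^\top R^{-1}R^{1/2}[\overline{V}_{(k+1)\Delta_l}^i-\overline{V}_{k\Delta_l}^i]$. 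There are finitely many summands, so $\mathsf{C}_q$ and the ambient dimensions only contribute fixed multiplicative factors. The drift is bounded in $\mathbb{L}_q$ by a constant times $\mathsf{C}_{q,k,l}$ through the induction hypothesis; the two Gaussian increments have $\mathbb{L}_q$-norm of order $\Delta_l^{1/2}$ by the standard moment formula for Gaussian increments; and the observation increment is controlled directly by \autoref{lem:y_inc}.

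The only genuinely nonlinear quantity, and the \emph{main obstacle}, is the sample covariance $P_{k\Delta_l}^{N}=\tfrac{1}{N-1}\sum_{i'=1}^N(\xi_{k\Delta_l}^{i'}-m_{k\Delta_l}^N)(\xi_{k\Delta_l}^{i'}-m_{k\Delta_l}^N)^\top$, which couples all the particles and multiplies the increments. The key sub-step is to show that each entry obeys $\mathbb{E}[|P_{k\Delta_l}^{N}(j_1,j_2)|^p]^{1/p}\le\mathsf{C}$ for all $p\in[1,\infty)$, uniformly in $N\ge2$: by Minkowski's inequality over the $N$ summands, then Cauchy--Schwarz applied to each product $(\xi^{i'}(j_1)-m^N(j_1))(\xi^{i'}(j_2)-m^N(j_2))$, together with the bound $\mathbb{E}[|m_{k\Delta_l}^N(j)|^{2p}]^{1/(2p)}\le\tfrac1N\sum_{i'}\mathbb{E}[|\xi_{k\Delta_l}^{i'}(j)|^{2p}]^{1/(2p)}$, everything reduces to $\mathbb{L}_{2p}$-norms of the individual particles at time $k\Delta_l$, which are bounded by the induction hypothesis (invoked at exponent $2p\ge2$, legitimate since the hypothesis is quantified over all exponents), while the prefactor $\tfrac{N}{N-1}\le2$ for $N\ge2$ absorbs the remaining dependence on $N$ (the range $p\in(0,1)$ again follows by Lyapunov). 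With this entrywise covariance bound in hand, the three gain terms are finished by one further Cauchy--Schwarz that separates $P_{k\Delta_l}^{N}$ from the factor it multiplies, e.g. $\mathbb{E}[|P_{k\Delta_l}^{N}(j_1,j_2)\,(Y_{(k+1)\Delta_l}-Y_{k\Delta_l})(j_3)|^q]^{1/q}\le\mathbb{E}[|P_{k\Delta_l}^{N}(j_1,j_2)|^{2q}]^{1/(2q)}\,\mathbb{E}[|(Y_{(k+1)\Delta_l}-Y_{k\Delta_l})(j_3)|^{2q}]^{1/(2q)}$, bounded by a constant times $\Delta_l^{1/2}$ via the covariance bound and \autoref{lem:y_inc}; the projected-state term is treated identically with the induction hypothesis on $\xi_{k\Delta_l}^i$ in place of \autoref{lem:y_inc}, and the perturbation-noise term with the Gaussian increment moment. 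Collecting the five groups and using $\Delta_l\le1$ yields a finite $\mathsf{C}_{q,k+1,l}$ independent of $N$ and $i$, which closes the induction. I would add that a bound uniform over a horizon $k\Delta_l\le T$, rather than merely for each fixed $k$, would additionally invoke the assumption $\mu(A)<0$ to prevent $\mathsf{C}_{q,k,l}$ from growing with $k$, but this stronger statement is not needed here.
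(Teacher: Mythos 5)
Your proof is correct, but note that this paper does not actually prove \autoref{lem:xi_lq}: it is one of the three $\mathbb{L}_q$-bounds imported verbatim from \cite{CJY20}, so there is no in-paper argument to compare against. Your induction on $k$ --- with the inductive statement quantified over \emph{all} exponents $q$ simultaneously, so that the sample-covariance bound at exponent $p$ may legitimately consume particle moments at exponents $2p$ and $4p$ through the nested Cauchy--Schwarz steps --- is exactly the standard route taken in \cite{CJY20}, and you have correctly identified and handled the only delicate points: the uniform-in-$N$ control of $P^N_{k\Delta_l}$ via the $\tfrac{N}{N-1}\le 2$ prefactor, and the fact that the constant is allowed to depend on $k$ so that no stability assumption on $A$ is needed.
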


We now present our first result for the single-level EnKBF NC estimator, which is presented as an $\mathbb{L}_q-$ error bound.

\begin{proposition}
\label{lem:sl_nc}
For any $(q,t,k_1,l)\in [1,\infty) \times \mathbb{N}_0^3$ there exists a $\mathsf{C}<+\infty$ such that for any $N \in \{2,3,\ldots\}$ we have:
$$
\mathbb{E}\Big[\Big|[\overline{U}_{t+k_1\Delta_l}^{N,l}(Y) - \overline{U}_{t+k_1\Delta_l}^{l}(Y)]\Big|^q\Big]^{1/q}\leq \frac{\mathsf{C}}{\sqrt{N}}.
$$
\end{proposition}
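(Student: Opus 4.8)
The plan is to expand the difference of log-normalizing constants as a \emph{finite} sum over time steps and bound each summand in $\mathbb{L}_q$ using Lemmas \ref{lem:y_inc}, \ref{lem:mean} and \ref{lem:xi_lq}. Put $K:=t\Delta_l^{-1}+k_1\in\mathbb{N}$, a fixed integer independent of $N$. From \eqref{eq:disc_nc_estimate} and $\overline{U}=\log\overline{Z}$,
$$
[\overline{U}_{t+k_1\Delta_l}^{N,l}-\overline{U}_{t+k_1\Delta_l}^l](Y)=\sum_{k=0}^{K-1}\big(A_k+B_k\big),
$$
where $A_k:=\langle C(m_{k\Delta_l}^N-m_{k\Delta_l}),R^{-1}[Y_{(k+1)\Delta_l}-Y_{k\Delta_l}]\rangle$, $B_k:=-\tfrac{\Delta_l}{2}\big(\langle m_{k\Delta_l}^N,Sm_{k\Delta_l}^N\rangle-\langle m_{k\Delta_l},Sm_{k\Delta_l}\rangle\big)$, and $m_{k\Delta_l}$ is the exact mean recursion \eqref{eq:iid_mean_rec}. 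Since the sum has a fixed number of terms, Minkowski's inequality reduces the claim to showing $\mathbb{E}[|A_k|^q]^{1/q}\leq\mathsf{C}/\sqrt N$ and $\mathbb{E}[|B_k|^q]^{1/q}\leq\mathsf{C}/\sqrt N$ for each $k\in\{0,\dots,K-1\}$, with $\mathsf{C}$ absorbing $K$ and all dependence on $(q,t,k_1,l)$.

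For $A_k$, Cauchy--Schwarz on the Euclidean inner product gives $|A_k|\leq\|C\|\,\|R^{-1}\|\,\|m_{k\Delta_l}^N-m_{k\Delta_l}\|_2\,\|Y_{(k+1)\Delta_l}-Y_{k\Delta_l}\|_2$ (matrix norms), and H\"older's inequality with exponents $(2,2)$ in $\mathbb{L}_q$ gives $\mathbb{E}[|A_k|^q]^{1/q}\leq\mathsf{C}\,\mathbb{E}[\|m_{k\Delta_l}^N-m_{k\Delta_l}\|_2^{2q}]^{1/2q}\mathbb{E}[\|Y_{(k+1)\Delta_l}-Y_{k\Delta_l}\|_2^{2q}]^{1/2q}$. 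Bounding the Euclidean norms by sums of the finitely many coordinate moduli and applying Minkowski once more, Lemma \ref{lem:mean} controls the first factor by $\mathsf{C}/\sqrt N$ and Lemma \ref{lem:y_inc} controls the second by $\mathsf{C}\Delta_l^{1/2}$, hence $\mathbb{E}[|A_k|^q]^{1/q}\leq\mathsf{C}/\sqrt N$. For $B_k$, the identity $\langle m^N,Sm^N\rangle-\langle m,Sm\rangle=\langle m^N-m,Sm^N\rangle+\langle m,S(m^N-m)\rangle$ yields $|B_k|\leq\tfrac{\Delta_l}{2}\|S\|\,\|m_{k\Delta_l}^N-m_{k\Delta_l}\|_2\big(\|m_{k\Delta_l}^N\|_2+\|m_{k\Delta_l}\|_2\big)$; H\"older in $\mathbb{L}_q$ again reduces this to (i) Lemma \ref{lem:mean}, and (ii) a uniform-in-$N$ moment bound $\mathbb{E}[\|m_{k\Delta_l}^N\|_2^{2q}]^{1/2q}+\mathbb{E}[\|m_{k\Delta_l}\|_2^{2q}]^{1/2q}\leq\mathsf{C}$. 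The bound on $\|m_{k\Delta_l}^N\|_2=\|N^{-1}\sum_i\xi_{k\Delta_l}^i\|_2$ is immediate from Minkowski and Lemma \ref{lem:xi_lq}, while the bound on $\|m_{k\Delta_l}\|_2$ follows either by a short induction on $k$ through \eqref{eq:iid_mean_rec} using Lemma \ref{lem:y_inc}, or by letting $N\to\infty$ in the $\mathbb{L}_{2q}$-bounded sequence $m_{k\Delta_l}^N$. Thus $\mathbb{E}[|B_k|^q]^{1/q}\leq\mathsf{C}/\sqrt N$, and summing over $k$ gives the result.

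The content here is essentially bookkeeping: because we work with the logarithm $\overline{U}=\log\overline{Z}$, the normalizing constant is a polynomial functional of the ensemble mean, so no exponential moments are needed and the propagation-of-chaos rate $N^{-1/2}$ passes through directly from Lemma \ref{lem:mean}. The only mildly delicate point is item (ii) --- securing $\mathbb{L}_{2q}$ moment bounds for the limiting mean $m_{k\Delta_l}$ itself --- together with the routine but necessary check that every constant produced is independent of $N$, depending on $(t,k_1,l)$ only through the fixed horizon $K$ and the finitely many invocations of the cited lemmas.
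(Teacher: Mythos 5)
Your proof is correct for the proposition as stated, and your top-level decomposition (linear-in-$m$ term plus quadratic-in-$m$ term, with the quadratic part handled by the bilinear splitting $\langle m^N,Sm^N\rangle-\langle m,Sm\rangle=\langle m^N-m,Sm^N\rangle+\langle m,S(m^N-m)\rangle$) coincides with the paper's; your treatment of the limiting mean $m_{k\Delta_l}$ via $\|m\|\le\|m^N\|+\|m^N-m\|$ is also legitimate and matches the paper's toolbox. Where you genuinely diverge is in the linear term: the paper further decomposes the observation increment as $[Y_{(k+1)\Delta_l}-Y_{k\Delta_l}]=CX_{k\Delta_l}\Delta_l+(\text{noise})$, treats the noise part as a discrete martingale and invokes Burkholder--Davis--Gundy so that the $\Delta_l^{-1}$-many summands accumulate in quadrature, while you simply apply the triangle inequality term by term, bounding each summand by $\mathsf{C}\Delta_l^{1/2}N^{-1/2}$ via Lemma \ref{lem:y_inc} and Lemma \ref{lem:mean} and absorbing the $K=t\Delta_l^{-1}+k_1$ terms into the constant. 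Your route is more elementary and needs no martingale structure, but the price is that your constant grows like $t\Delta_l^{-1}\cdot\Delta_l^{1/2}=t\Delta_l^{-1/2}$, i.e.\ it degrades as $l\to\infty$; the paper's BDG argument is designed precisely to keep the accumulation over the $\Delta_l^{-1}$ time steps of order one. Since the proposition quantifies ``for any $l$ there exists $\mathsf{C}$'', an $l$-dependent constant is admissible and your proof establishes the stated claim, but be aware that your version of the bound would not transfer to any context requiring uniformity in $l$ (as is implicitly needed in the multilevel part of the analysis), so the martingale decomposition is not mere decoration.
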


\begin{proof}
Let us first consider $\overline{U}_{t+k_1\Delta_l}^{N,l}(Y) - \overline{U}_{t+k_1\Delta_l}^{l}(Y)$, which for every $l \in \mathbb{N}_0$, we can decompose through a martingale remainder-type decomposition,
\begin{equation}
\label{eq:diff}
\overline{U}_{t+k_1\Delta_l}^{N,l}(Y) - \overline{U}_{t+k_1\Delta_l}^{l}(Y) = M_{t+k_1\Delta_l}^l(Y) + R_{t+k_1\Delta_l}^l,
\end{equation}
such that 
\begin{eqnarray*}
M_{t+k_1\Delta_l}^l(Y) & = & \sum_{k=0}^{t\Delta_l^{-1}+k_1-1} 
\langle Cm_{k\Delta_l}^N, R^{-1} [Y_{(k+1)\Delta_l}-Y_{k\Delta_l}]\rangle -
\sum_{k=0}^{t\Delta_l^{-1}+k_1-1} 
\langle Cm_{k\Delta_l}, R^{-1} [Y_{(k+1)\Delta_l}-Y_{k\Delta_l}]\rangle, \\
R_{t+k_1\Delta_l}^l & = & -\frac{\Delta_l}{2}\sum_{k=0}^{t\Delta_l^{-1}+k_1-1}  \langle m_{k\Delta_l}^N,Sm_{k\Delta_l}^N\rangle +
\frac{\Delta_l}{2}\sum_{k=0}^{t\Delta_l^{-1}+k_1-1}  \langle m_{k\Delta_l},Sm_{k\Delta_l}\rangle.
\end{eqnarray*}
We can decompose the martingale term from \eqref{eq:diff} further through 
$$ 
M_{t+k_1\Delta_l}^l(Y) =  M_{t+k_1\Delta_l}^l(1) + R_{t+k_1\Delta_l}^l(1),
$$
where, 
by setting ${k=t\Delta_l^{-1}+k_1-1}$,
\begin{eqnarray}
\label{eq:M}
M_{t+k_1\Delta_l}^l(1) & = & \sum_{k=0}^{t\Delta_l^{-1}+k_1-1}\sum_{j_1=1}^{d_y}\sum_{j_2=1}^{d_x}\sum_{j_3=1}^{d_y} C(j_1,j_2)\{m_{k\Delta_l}^N(j_2)-m_{k\Delta_l}(j_2)\}
R^{-1}(j_1,j_3)\times \\ & & \{[Y_{(k+1)\Delta_l}-Y_{k\Delta_l}](j_3)-CX_{k\Delta_l}(j_3)\Delta_l\}, \nonumber \\
\label{eq:R}
R_{t+k_1\Delta_l}^l(1) & = & \Delta_l\sum_{k=0}^{t\Delta_l^{-1}+k_1-1}\sum_{j_1=1}^{d_y}\sum_{j_2=1}^{d_x}\sum_{j_3=1}^{d_y}C(j_1,j_2)\{m_{k\Delta_l}^N(j_2)-m_{k\Delta_l}(j_2)\} \times \\&& R^{-1}(j_1,j_3)CX_{k\Delta_l}(j_3). \nonumber
\end{eqnarray}
In order to proceed we construct a martingale associated with the term of $M_t^l$. Let us first begin with the $M_t^l(1)$ term \eqref{eq:M}, where we construct the filtration $(\Omega,\mathscr{F}, \mathscr{F}_{k \Delta_l}, \mathbb{P})$ for our discrete-time martingale $(M^l_t(1), \mathscr{F}_{k \Delta_l})$. 
\\\\
Then by using H\"{o}lder's inequality 
\begin{eqnarray}
\mathbb{E}[|M^l_{t+k_1\Delta_l}(1)|^q]^{1/q} & = &   \mathbb{E} \Big[\Big|\sum_{k=0}^{t\Delta_l^{-1}+k_1-1}\sum_{j_1=1}^{d_y}\sum_{j_2=1}^{d_x}\sum_{j_3=1}^{dy} C(j_1,j_2)\{m_{k\Delta_l}^N(j_2)-m_{k\Delta_l}(j_2)\}
R^{-1}(j_1,j_3) \nonumber  \times 
\\ && \{[Y_{(k+1)\Delta_l}-Y_{k\Delta_l}](j_3)-CX_{k\Delta_l}(j_3)\Delta_l\}\Big|^q\Big]^{1/q} \nonumber \\
 & \leq &   \mathbb{E} \Big[\Big|\sum_{k=0}^{t\Delta_l^{-1}+k_1-1} \sum_{j_1=1}^{d_y}\sum_{j_2=1}^{d_x}\sum_{j_3=1}^{d_y} C(j_1,j_2)\{m_{k\Delta_l}^N(j_2)-m_{k\Delta_l}(j_2)\}
R^{-1}(j_1,j_3)\Big|^{2q}\Big]^{1/2q} \nonumber  \times
 \\ &&  \mathbb{E} \Big[\Big| \sum_{k=0}^{t\Delta_l^{-1}+k_1-1} \sum_{j_3=1}^{dy} \{[Y_{(k+1)\Delta_l}-Y_{k\Delta_l}](j_3)-CX_{k\Delta_l}(j_3)\Delta_l\} \Big|^{2q} \Big]^{1/2q} \nonumber \\
 &=:& T_1 \times T_2.
\label{eq:bound1}
 \end{eqnarray}
For $T_1$ we can apply the Minkowski inequality and  \autoref{lem:mean} to yield
\begin{eqnarray*}
T_1 &\leq& \sum_{k=0}^{t\Delta_l^{-1}+k_1-1} \sum_{j_1=1}^{d_y} \sum_{j_2=1}^{d_x} \sum_{j_3=1}^{d_y} C(j_1,j_2) R^{-1}(j_1,j_3)\mathbb{E} \Big[\Big|\{m_{k\Delta_l}^N(j_2)-m_{k\Delta_l}(j_2)\} \Big|^{2q}\Big]^{1/2q} \\
&\leq &  \frac{\mathsf{C}}{\sqrt{N}}.
\end{eqnarray*}
For $T_2$ we know that the expression $\{[Y_{(k+1)\Delta_l}-Y_{k\Delta_l}](j_3)-CX_{k\Delta_l}(j_3)\Delta_l\}$ is a Brownian motion \\ increment, using the formulae \eqref{eq:data} - \eqref{eq:signal}. Therefore by using the Burkholder--Davis--Gundy inequality, along with Minkowski, for $\tilde{q} = 2q$, we have
\begin{eqnarray*}
T_2 &\leq& \sum_{j_3=1}^{d_y} \mathbb{E}\Big[\Big| \sum^{t\Delta^{-1}_l+k_1-1}_{k=0} [V_{(k+1)\Delta_l} -V_{k\Delta_l}](j_3)\Big|^{\tilde{q}}\Big]^{1/\tilde{q}} \\
&\leq&  \sum_{j_3=1}^{d_y} \sum^{t\Delta^{-1}_l+k_1-1}_{k=0} \mathsf{C}_{\tilde{q}} \mathbb{E}\Big[ \Big|[V_{(k+1)\Delta_l} -V_{k\Delta_l}]^2(j_3)\Big|^{\tilde{q}/2}\Big]^{1/\tilde{q}}  \\
&\leq&  \sum_{j_3=1}^{d_y}  \sum^{t\Delta^{-1}_l+k_1-1}_{k=0}  \mathsf{C}_{\tilde{q}} \Big( \mathbb{E}\Big[\Big|[V_{(k+1)\Delta_l} -V_{k\Delta_l}](j_3)\Big|^{\tilde{q}}\Big]^{2/\tilde{q}} \Big)^{1/2}
\end{eqnarray*}
Then using that fact that $\mathbb{E}[|[V_{(k+1)\Delta_l}-V_{k\Delta_l}]|^{\tilde{q}}] = O(\Delta_l^{\tilde{q}/2})$, and with the summation it is of order $\mathcal{O}(\Delta^{(1/2-1/\tilde{q})})$, we can conclude $T_2$ is of order $\mathcal{O}(1)$, as we have used the case of when $q \geq 1$.
\\\\
For the $R_t^l(1)$ term, it follows similarly to $M_t^l(1)$, where we require the use of \autoref{lem:mean}. 
\\\\
Again we make use of the Minkowski and H\"{o}lder inequality, and \autoref{lem:mean},
\begin{eqnarray}
\nonumber
\mathbb{E}[|R^l_{t+k_1\Delta_l}(1)|^q]^{1/q} & = & \mathbb{E} \Big[ \Big|  \sum_{k=0}^{t\Delta_l^{-1}+k_1-1} \sum_{j_1=1}^{d_x}\sum_{j_2=1}^{d_x}\sum_{j_3=1}^{d_y}C(j_1,j_2)\{m_{k\Delta_l}^N(j_2)-m_{k\Delta_l}(j_2)\}R^{-1}(j_1,j_3)CX_{k\Delta_l}(j_3) \Delta_l \Big|^q \Big]^{1/q} \\
\nonumber
& \leq &  \sum_{k=0}^{t\Delta_l^{-1}+k_1-1}\sum_{j_1=1}^{d_y}\sum_{j_2=1}^{d_x}\sum_{j_3=1}^{d_y}\sum^{d_x}_{j_4=1}C(j_1,j_3)R^{-1}(j_1,j_3) \Big( \mathbb{E}\Big[\Big|m^N_{k \Delta l}(j_2) - m_{k \Delta l}(j_2)\Big|^{2q}\Big]^{1/2q}  \times \\
\nonumber&& \mathbb{E} \Big[\Big|X_{k\Delta_l}(j_4)\Delta_l\Big|^{2q}\Big]^{1/2q}\Big) \\
 \label{eq:bound}
 &\leq&  \sum_{k=0}^{t\Delta_l^{-1}+k_1-1} \sum^{d_x}_{j_4=1}  \frac{\mathsf{C}}{\sqrt{N}} \mathbb{E} \Big[\Big|X_{k\Delta_l}(j_4)\Delta_l\Big|^{2q}\Big]^{1/2q}.
\end{eqnarray}
For the final term of \eqref{eq:bound} we can show it is of order $\mathcal{O}(1)$, using the Cauchy--Schwarz \\ and Jensen's inequality
\begin{eqnarray*}
\mathbb{E} \Big[\Big|X_{k\Delta_l}(j_4)\Delta_l\Big|^{2q}\Big]^{1/2q}  &=& \Delta_l  \mathbb{E}\Big[\Big| \int^{(k+1)\Delta_l}_{k\Delta_l}{\Delta_l^{-1}}X_s(j_4) ds\Big|^{2q}\Big]^{1/2q} \\
&\leq& \Delta_l \mathbb{E} \Big[ \Big( \int^{(k+1)\Delta_l}_{k\Delta_l}\Delta^{-2}_l ds \Big)^q \Big( \int^{(k+1)\Delta_l}_{k\Delta_l} |X_s(j_4)|^2ds\Big)^q \Big]^{1/2q}  \\ 
& =&  \Delta^{1/2}_l \mathbb{E} \Big[  \Big( \int^{(k+1)\Delta_l}_{k\Delta_l} |X_s(j_4)|^2ds\Big)^q \Big]^{1/2q}\\
& \leq &  \Delta_l^{1/2}  \mathbb{E}\Big[{\Delta_l^{q-1}\int^{(k+1)\Delta_l}_{k\Delta_l}}\Big| X_s(j_4) \Big|^{2q} ds\Big]^{1/2q} \\
& \leq & \Delta_l^{1-1/2q}\Big(\int^{(k+1)\Delta_l}_{k\Delta_l}{\mathbb{E}[|X_s(j_4)|^{2q}]}_{}ds\Big)^{1/2q} \\
&\leq& \mathsf{C} \Delta_l^{1-1/2q} (\Delta_l)^{1/2q} \quad  \textrm{(By \cite{DT18} Eq. 54)} \\
&=& \mathsf{C} \Delta_l,
\end{eqnarray*}
therefore combing this with the summation in \eqref{eq:bound}, the above quantity is of order $\mathcal{O}(1)$, resulting in $\mathbb{E}[|R^l_{t+k_1\Delta_l}(1)|^q]^{1/q} \leq \frac{\mathsf{C}}{\sqrt{N}}$. All that is left is the $R^l_{t+k_1\Delta_l}$ term. Before proceeding we can express, or rewrite, the $R_{t+k_1\Delta_l}^l$ term as
\begin{eqnarray*}
R_{t+k_1\Delta_l}^l & = & -\frac{\Delta_l}{2}\sum_{k=0}^{t\Delta_l^{-1}+k_1-1} \langle m_{k\Delta_l}^N,Sm_{k\Delta_l}^N\rangle + \frac{\Delta_l}{2}\sum_{k=0}^{t\Delta_l^{-1}-1} \langle m_{k\Delta_l},Sm_{k\Delta_l}\rangle \\
& =& -\frac{\Delta_l}{2}\sum_{k=0}^{t\Delta_l^{-1}+k_1-1}\sum_{j_1=1}^{d_x}\sum_{j_2=1}^{d_x}\{m_{k\Delta_l}^N(j_1)S(j_1,j_2)m_{k\Delta_l}^N(j_2)-m_{k\Delta_l}(j_1)S(j_1,j_2)m_{k\Delta_l}(j_2)\} \\
&=&  -\frac{\Delta_l}{2}\sum_{k=0}^{t\Delta_l^{-1}+k_1-1}\sum_{j_1=1}^{d_x}\sum_{j_2=1}^{d_x} \{m_{k\Delta_l}^N(j_1)-m_{k\Delta_l}(j_1)\}S(j_1,j_2)m_{k\Delta_l}^N(j_2) \\
& +& m_{k\Delta_l}(j_1)S(j_1,j_2)\{m_{k\Delta_l}^N(j_2)-m_{k\Delta_l}(j_2)\}. 
\end{eqnarray*}
By taking its associated $\mathbb{L}_q$-bound, from Minkowski's inequality and  \autoref{lem:mean} - \autoref{lem:xi_lq}, we have
\begin{eqnarray*}
\mathbb{E}[|R_{t+k_1\Delta_l}^l|^q]^{1/q} & = & \mathbb{E} \Big[ \Big| -\frac{\Delta_l}{2}\sum_{k=0}^{t\Delta_l^{-1}+k_1-1}\sum_{j_1=1}^{d_x}\sum_{j_2=1}^{d_x} \{m_{k\Delta_l}^N(j_1)-m_{k\Delta_l}(j_1)\}S(j_1,j_2)m_{k\Delta_l}^N(j_2) \\
& +& m_{k\Delta_l}(j_1)S(j_1,j_2)\{m_{k\Delta_l}^N(j_2)-m_{k\Delta_l}(j_2)\} \Big|^q\Big]^{1/q} \\
& \leq & -\frac{\Delta_l}{2}\sum_{k=0}^{t\Delta_l^{-1}+k_1-1}\sum_{j_1=1}^{d_x}\sum_{j_2=1}^{d_x} S(j_1,j_2)\Big( \mathbb{E}\Big[\Big|\{m_{k\Delta_l}^N(j_1)-m_{k\Delta_l}(j_1)\}m_{k\Delta_l}^N(j_2)\Big|^{q}\Big]^{1/q} \\
& +& \mathbb{E}\Big[\Big|m_{k\Delta_l}(j_1)\{m_{k\Delta_l}^N(j_2)-m_{k\Delta_l}(j_2)\}\Big|^{q}\Big]^{1/q} \Big) \\
& \leq&  \frac{\mathsf{C}}{\sqrt{N}}.
\end{eqnarray*}
Finally by using the Minkowski inequality, we can deduce that
\begin{eqnarray*}
\mathbb{E}\Big[\Big|[\overline{U}_{t+k_1\Delta_l}^{N,l}(Y) - \overline{U}_{t+k_1\Delta_l}^{l}(Y)]\Big|^q\Big]^{1/q} &=& \mathbb{E}\Big[\Big| M_{t+k_1\Delta_l}^l(1) + R_{t+k_1\Delta_l}^l(1) + R_{t+k_1\Delta_l}^l \Big| ^q \Big]^{1/q} \\
& \leq& \mathbb{E}[|M^l_{t+k_1\Delta_l}(1)|^q]^{1/q} + \mathbb{E}[|R^l_{t+k_1\Delta_l}(1)|^q]^{1/q} + \mathbb{E}[|R^l_{t+k_1\Delta_l}|^q]^{1/q} \\
& \leq & \frac{\mathsf{C}}{\sqrt{N}}.
\end{eqnarray*}

\end{proof}

\section{Analysis for Discretized Diffusion Process}
 \label{Appendix:B}
In this appendix we consider deriving analysis for the discretized diffusion process. This will include both the discretized i.i.d. particle system
\begin{align}
\nonumber
\zeta_{(k+1)\Delta_l}^i &=(I+A\Delta_l)\zeta_{k\Delta_l}^i + Q^{1/2} [\overline{W}_{(k+1)\Delta_l}^i-\overline{W}_{k\Delta_l}^i]  \\
&+ P_{k \Delta_l} C^{\top}R^{-1}\Big([Y_{(k+1)\Delta_l}-Y_{k\Delta_l}] -
\Big[C\zeta_{k\Delta_l}^i\Delta_l+R^{1/2}[\overline{V}_{(k+1)\Delta_l}^i-\overline{V}_{k\Delta_l}^i]\Big]\Big), 
\label{eq:appb_iid}
\end{align}
and the discretized NC estimator. We recall, in the limit as $N \rightarrow \infty$, the i.i.d. system coincides with discretized Kalman--Bucy diffusion whose mean,  for $(k,l)\in\mathbb{N}_0\times\mathbb{N}_0$, is defined by
\begin{equation}\label{eq:deter_mean_evol}
m_{(k+1)\Delta_l}^l =  m_{k\Delta_l}^l + Am^l_{k\Delta_l}\Delta_l + U_{k\Delta_l}^l\Big(
[Y_{(k+1)\Delta_l}-Y_{k\Delta_l}] -Cm_{k\Delta_l}^l\Delta_l\Big),
\end{equation}
We note in this appendix our results will use the notation $\overline{X}$ for the Kalman--Bucy diffusion, to keep it consistent with \cite{CJY20}. However these results also hold for the i.i.d. system \eqref{eq:appb_iid}. We require additional lemmas from \cite{CJY20}, which are discretization bias results for the discretized Kalman--Bucy diffusion. We state these as follows. The notation of the equations are modified for the multilevel, which we will discuss later.
\begin{lem}\label{lem:p_disc}
For any $T\in\mathbb{N}$ fixed and $t\in[0,T]$ there exists a $\mathsf{C}<+\infty$ such that for any $(l,j_1,j_2)\in\mathbb{N}_0\times\{1,\dots,d_x\}^2$:
$$
\Big|\mathcal{P}_{t}(j_1,j_2)-P_{\tau_t^l}^l(j_1,j_2)\Big| \leq \mathsf{C}\Delta_l.
$$
\end{lem}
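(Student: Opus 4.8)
The plan is the textbook one-step-error-plus-discrete-Gr\"onwall argument for the modified Euler scheme \eqref{eq:iid_cov_rec}, the stability input being supplied by the standing assumptions. Write $\Phi_{\Delta}(G):=G+\textrm{Ricc}(G)\Delta+(A-GS)G(A^{\top}-SG)\Delta^{2}$ for the one-step map, so that $P^{l}_{(k+1)\Delta_{l}}=\Phi_{\Delta_{l}}(P^{l}_{k\Delta_{l}})$ and $P^{l}_{0}=\mathcal{P}_{0}$. First I would record two boundedness facts. On one hand, $t\mapsto\mathcal{P}_{t}$ is smooth on $[0,T]$ with all derivatives bounded by a constant depending only on $T$ and the model: this follows by bootstrapping the Riccati ODE $\partial_{t}\mathcal{P}_{t}=\textrm{Ricc}(\mathcal{P}_{t})$ (a polynomial vector field) once one knows $\sup_{t\in[0,T]}\|\mathcal{P}_{t}\|<\infty$, which holds under the controllability/observability hypotheses invoked after \eqref{eq:bound_deter_cov} (indeed $\mathcal{P}_{t}$ is exponentially stable by \cite[Proposition~5.3]{DT18}). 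On the other hand, \eqref{eq:bound_deter_cov} gives $\sup_{k,l}\|P^{l}_{k\Delta_{l}}\|\le\mathsf{C}$. Hence all matrices entering the analysis stay in a fixed compact set $\mathcal{B}\subset\mathbb{R}^{d_{x}\times d_{x}}$, on which $\textrm{Ricc}$ and $G\mapsto(A-GS)G(A^{\top}-SG)$ are Lipschitz.

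Next comes consistency. By Taylor's theorem and the derivative bound, $\mathcal{P}_{(k+1)\Delta_{l}}=\mathcal{P}_{k\Delta_{l}}+\Delta_{l}\,\textrm{Ricc}(\mathcal{P}_{k\Delta_{l}})+\mathcal{O}(\Delta_{l}^{2})$, while $\Phi_{\Delta_{l}}(\mathcal{P}_{k\Delta_{l}})=\mathcal{P}_{k\Delta_{l}}+\Delta_{l}\,\textrm{Ricc}(\mathcal{P}_{k\Delta_{l}})+\mathcal{O}(\Delta_{l}^{2})$ because the quadratic correction is $\mathcal{O}(\Delta_{l}^{2})$ uniformly on $\mathcal{B}$; subtracting gives a local truncation error $\|\mathcal{P}_{(k+1)\Delta_{l}}-\Phi_{\Delta_{l}}(\mathcal{P}_{k\Delta_{l}})\|\le\mathsf{C}\Delta_{l}^{2}$ with $\mathsf{C}$ independent of $l$ (one does not need the second-order terms to match, since only $\mathcal{O}(\Delta_{l})$ global accuracy is claimed). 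For stability, Lipschitzness on $\mathcal{B}$ yields $\|\Phi_{\Delta_{l}}(G)-\Phi_{\Delta_{l}}(G')\|\le(1+\mathsf{C}\Delta_{l})\|G-G'\|$ for $G,G'\in\mathcal{B}$ and $\Delta_{l}\le1$.

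Setting $e_{k}:=\|\mathcal{P}_{k\Delta_{l}}-P^{l}_{k\Delta_{l}}\|$ in a fixed matrix norm, the triangle inequality and the two previous bounds give $e_{k+1}\le\mathsf{C}\Delta_{l}^{2}+(1+\mathsf{C}\Delta_{l})e_{k}$ with $e_{0}=0$; iterating over $k\le T\Delta_{l}^{-1}$ and using $(1+\mathsf{C}\Delta_{l})^{k}\le e^{\mathsf{C}T}$ gives $e_{k}\le\mathsf{C}e^{\mathsf{C}T}\Delta_{l}$, uniformly in $l$ and in $k\le T\Delta_{l}^{-1}$. Finally $|t-\tau^{l}_{t}|\le\Delta_{l}$ together with the bound on $\partial_{t}\mathcal{P}$ gives $\|\mathcal{P}_{t}-\mathcal{P}_{\tau^{l}_{t}}\|\le\mathsf{C}\Delta_{l}$; adding this to $e_{\tau^{l}_{t}\Delta_{l}^{-1}}$ and passing from the matrix norm to the entrywise one by norm equivalence yields the stated inequality. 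The only genuinely delicate point is keeping every iterate, continuous and discrete alike, inside the fixed set $\mathcal{B}$ uniformly in $(l,k)$; for the discrete recursion this is precisely \eqref{eq:bound_deter_cov}, and for the continuous flow it is the exponential stability of the Riccati equation, after which the remainder is the routine Euler consistency-and-stability bookkeeping.
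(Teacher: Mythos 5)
Your argument is sound, but note that the paper does not actually prove this lemma: it is stated at the start of \autoref{Appendix:B} as a result imported verbatim from \cite{CJY20} (``We require additional lemmas from \cite{CJY20}\dots We state these as follows''), so there is no in-paper proof to compare against. Your consistency--stability--discrete-Gr\"onwall scheme for the one-step map $\Phi_{\Delta_l}$ of \eqref{eq:iid_cov_rec} is the natural self-contained route, and you have correctly isolated the only two non-routine inputs: (i) the uniform-in-$(k,l)$ boundedness of the discrete iterates $P_{k\Delta_l}^l$, which in this paper is simply the standing assumption \eqref{eq:bound_deter_cov} rather than something to be derived, and (ii) boundedness of $\mathcal{P}_t$ on $[0,T]$, for which well-posedness of the Riccati flow on the compact interval already suffices since the constant may depend on $T$ (the exponential stability from \cite[Proposition 5.3]{DT18} is stronger than needed). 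Two minor bookkeeping remarks: the local truncation estimate needs only two bounded derivatives of $t\mapsto\mathcal{P}_t$, obtained by differentiating the polynomial Riccati vector field once, so ``all derivatives bounded'' is more than you require; and in the error recursion it is worth making explicit that the Lipschitz step $\|\Phi_{\Delta_l}(\mathcal{P}_{k\Delta_l})-\Phi_{\Delta_l}(P_{k\Delta_l}^l)\|\leq(1+\mathsf{C}\Delta_l)\|\mathcal{P}_{k\Delta_l}-P_{k\Delta_l}^l\|$ is legitimate precisely because \emph{both} arguments lie in the fixed compact set $\mathcal{B}$ --- this is exactly where \eqref{eq:bound_deter_cov} enters and cannot be dispensed with. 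With those caveats the argument is complete and fills a proof the paper leaves to the cited reference.
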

\begin{lem}\label{lem:strong_error1}
For any $T\in\mathbb{N}$ fixed and $t\in[0,T-1]$ there exists a $\mathsf{C}<+\infty$ such that for any $(l,j,k_1)\in\mathbb{N}_0\times\{1,\dots,d_x\}\times\{0,1,\dots,\Delta_{l}^{-1}\}$:
$$
\mathbb{E}\Big[\Big(\overline{X}_{t+k_1\Delta_{l}}(j)-\overline{X}_{t+k_1\Delta_{l}}^l(j)\Big)^2\Big] \leq \mathsf{C}\Delta_l^2.
$$
\end{lem}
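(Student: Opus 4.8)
The plan is to run a discrete Gr\"onwall argument on the mean-square error along the grid $\{0,\Delta_l,2\Delta_l,\dots\}$, exploiting that the Kalman--Bucy diffusion \eqref{eq:vanilla_kbf} (equivalently the i.i.d.\ system \eqref{eq:appb_iid}) has an \emph{affine} drift and a diffusion coefficient that depends only \emph{deterministically} on time: indeed $\mathcal{P}_{\eta_s}=\mathcal{P}_s$ solves the Riccati ODE $\partial_s\mathcal{P}_s=\textrm{Ricc}(\mathcal{P}_s)$, which under our standing assumptions is bounded and hence Lipschitz in $s$ on $[0,T]$. Rewriting the diffusion as
$$
d\overline{X}_s=(A-\mathcal{P}_sS)\overline{X}_s\,ds+\mathcal{P}_sC^{\top}R^{-1}\,dY_s+Q^{1/2}\,d\overline{W}_s-\mathcal{P}_sC^{\top}R^{-1}R^{1/2}\,d\overline{V}_s,
$$
with $S=C^{\top}R^{-1}C$, and its Euler approximation $\overline{X}^l$ obtained by freezing coefficients at the left grid point and replacing $\mathcal{P}_s$ by the discretized Riccati iterate $P^l_{\tau^l_s}$ from \eqref{eq:iid_cov_rec}, the $Q^{1/2}\,d\overline{W}_s$ contributions cancel identically in $e_t:=\overline{X}_t-\overline{X}^l_t$. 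This exact cancellation of the constant part of the diffusion coefficient is the structural reason strong order one (rather than one half) is available.

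First I would derive the one-step identity on $[k\Delta_l,(k+1)\Delta_l]$, which after collecting terms reads $e_{(k+1)\Delta_l}=B_k\,e_{k\Delta_l}+\mathcal{D}_k+\mathcal{R}_k+\mathcal{S}_k$, where $B_k:=I+\int_{k\Delta_l}^{(k+1)\Delta_l}(A-\mathcal{P}_sS)\,ds$ is deterministic with $\|B_k\|\le 1+\mathsf{C}\Delta_l$; $\mathcal{D}_k:=\int_{k\Delta_l}^{(k+1)\Delta_l}(A-\mathcal{P}_sS)(\overline{X}_s-\overline{X}_{k\Delta_l})\,ds$ is the drift-freezing error; $\mathcal{R}_k$ gathers all contributions of $\mathcal{P}_s-P^l_{\tau^l_s}$ appearing against $ds$-integrals (including the $CX_s\,ds$ part of $dY_s$); and $\mathcal{S}_k:=\int_{k\Delta_l}^{(k+1)\Delta_l}(\mathcal{P}_s-P^l_{\tau^l_s})C^{\top}R^{-1}R^{1/2}(dV_s-d\overline{V}_s)$ is a stochastic integral against Brownian increments. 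Using \autoref{lem:p_disc} together with the Lipschitz-in-time regularity of $\mathcal{P}$ gives $\|\mathcal{P}_s-P^l_{\tau^l_s}\|=O(\Delta_l)$ throughout the interval; the affine dynamics and bounded moments of $\overline{X}_s$, $\overline{X}^l_{k\Delta_l}$ and $X_s$ on $[0,T]$ (standard for linear SDEs with bounded coefficients, cf.\ \cite{DT18,CJY20}, and \autoref{lem:y_inc}) give $\|\overline{X}_s-\overline{X}_{k\Delta_l}\|_{L_2}=O(\Delta_l^{1/2})$ with $\mathscr{F}_{k\Delta_l}$-conditional mean of order $\Delta_l$. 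Consequently $\|\mathcal{D}_k\|_{L_2}=O(\Delta_l^{3/2})$ but $\mathbb{E}[\mathcal{D}_k\mid\mathscr{F}_{k\Delta_l}]=O(\Delta_l^{2})$ in $L_2$; $\mathcal{R}_k$ is an $O(\Delta_l^{2})$ remainder in $L_2$; and $\mathcal{S}_k$ is $\mathscr{F}_{k\Delta_l}$-conditionally centred with $\|\mathcal{S}_k\|_{L_2}=O(\Delta_l^{3/2})$.

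Then I would split the local error as $e_{(k+1)\Delta_l}=B_k e_{k\Delta_l}+\mathcal{E}^{\mathrm{det}}_k+\mathcal{E}^{\mathrm{mart}}_k$, with $\mathcal{E}^{\mathrm{mart}}_k:=(\mathcal{D}_k-\mathbb{E}[\mathcal{D}_k\mid\mathscr{F}_{k\Delta_l}])+(\mathcal{R}_k-\mathbb{E}[\mathcal{R}_k\mid\mathscr{F}_{k\Delta_l}])+\mathcal{S}_k$ being $\mathscr{F}_{k\Delta_l}$-conditionally centred of $L_2$-size $O(\Delta_l^{3/2})$, and $\mathcal{E}^{\mathrm{det}}_k$ being $\mathscr{F}_{k\Delta_l}$-measurable of $L_2$-size $O(\Delta_l^{2})$. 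Taking $\mathbb{E}[\,\cdot\mid\mathscr{F}_{k\Delta_l}]$ kills the cross term $\langle B_k e_{k\Delta_l}+\mathcal{E}^{\mathrm{det}}_k,\mathcal{E}^{\mathrm{mart}}_k\rangle$; applying Young's inequality with weight $\Delta_l$ to $\langle B_k e_{k\Delta_l},\mathcal{E}^{\mathrm{det}}_k\rangle$ and then taking full expectations yields $\mathbb{E}\|e_{(k+1)\Delta_l}\|_2^2\le(1+\mathsf{C}\Delta_l)\,\mathbb{E}\|e_{k\Delta_l}\|_2^2+\mathsf{C}\Delta_l^{3}$. Since $e_0=0$, iterating over the at most $T\Delta_l^{-1}$ steps up to time $t+k_1\Delta_l\le T$ and using $(1+\mathsf{C}\Delta_l)^{T\Delta_l^{-1}}\le e^{\mathsf{C}T}$ gives $\mathbb{E}\|e_{t+k_1\Delta_l}\|_2^2\le\mathsf{C}e^{\mathsf{C}T}\,\Delta_l^{3}\cdot T\Delta_l^{-1}=O(\Delta_l^{2})$, whence the coordinate-wise bound. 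I expect the main obstacle to be exactly the bookkeeping in this last step: one must extract from $\mathcal{D}_k$ (and from the $ds$-part absorbed into $\mathcal{R}_k$) the $O(\Delta_l^{2})$ non-centred piece and verify that the remainder is $\mathscr{F}_{k\Delta_l}$-conditionally centred of size $O(\Delta_l^{3/2})$ --- without this refinement the naive bound $\|\mathcal{D}_k\|_{L_2}=O(\Delta_l^{3/2})$ fed directly into the Gr\"onwall loop would only return strong order one half, and it is precisely the affine drift and the time-deterministic diffusion coefficient that make the sharper splitting work.
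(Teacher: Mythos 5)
The paper does not actually prove this lemma: it is stated in \autoref{Appendix:B} as one of the discretization results imported verbatim from \cite{CJY20}, and only the \emph{weak}-error analogue (\autoref{lem:weak_error1}) is proved here, via a Gr\"onwall argument on $|\mathbb{E}[\overline{X}_s(j)-\overline{X}^l_{\tau^l_s}(j)]|$. Your argument is therefore not comparable to an in-paper proof, but on its own merits it is correct and is the natural $\mathbb{L}_2$ counterpart of that weak-error proof. You correctly identify the structural reason strong order one is attainable: the drift is affine, the constant diffusion term $Q^{1/2}d\overline{W}$ cancels exactly in the error because the Euler scheme uses the exact Brownian increment, and the remaining noise coefficients $\mathcal{P}_sC^{\top}R^{-1}$ and $\mathcal{P}_sC^{\top}R^{-1}R^{1/2}$ are deterministic in time, so \autoref{lem:p_disc} bounds their mismatch with $P^l_{\tau^l_s}$ by $O(\Delta_l)$ and the resulting stochastic integrals are conditionally centred of size $O(\Delta_l^{3/2})$. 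The centred/non-centred splitting of the local error, with Young's inequality weighted by $\Delta_l$ on the deterministic $O(\Delta_l^2)$ piece, is exactly what upgrades the naive order $1/2$ to order $1$, and the discrete Gr\"onwall closes the argument since $e_0=0$. Two minor points to tighten: (i) the conditioning that kills the cross terms must be on the filtration generated by $(Y,X,\overline{W},\overline{V})$ up to $k\Delta_l$, not the observation filtration $\mathscr{F}_{k\Delta_l}$ alone, since $e_{k\Delta_l}$ and $\mathcal{E}^{\mathrm{det}}_k$ are not $\mathscr{F}_{k\Delta_l}$-measurable; (ii) the claim $\mathbb{E}[\mathcal{D}_k\mid\cdot]=O(\Delta_l^2)$ deserves the one-line justification that, after a stochastic Fubini, the martingale part of $\overline{X}_s-\overline{X}_{k\Delta_l}$ contributes a conditionally centred $O(\Delta_l^{3/2})$ term while only the drift part survives the conditional expectation. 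You also implicitly need uniform-in-$(k,l)$ second moments of the Euler iterates $\overline{X}^l_{k\Delta_l}$, which is standard and consistent with \autoref{lem:xi_lq}. None of these is a gap in the idea; the proof is sound.
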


We now present our first conditional bias result, which will be the weak error of the Kalman--Bucy diffusion. This weak error will be analogous to the strong error
of \autoref{lem:strong_error1}, which was not proved, or provided, in \cite{CJY20}. However this result will be required for the current and succeeding appendix.

\begin{lem}\label{lem:weak_error1}
For any $T\in\mathbb{N}$ fixed and $t\in[0,T-1]$ there exists a $\mathsf{C}<+\infty$
 such that for any $(l,k_1)\in\mathbb{N}_0\times\{1,\dots,d_x\}\times\{0,1,\dots,\Delta_{l}^{-1}\}$:
$$
\Big|\mathbb{E}\Big[\overline{X}_{t+k_1\Delta_{l}}(j)-\overline{X}_{t+k_1\Delta_{l}}^l(j)\Big]\Big| \leq \mathsf{C}\Delta_l.
$$
\end{lem}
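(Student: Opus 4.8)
The plan is to reduce the weak error to a deterministic ODE-versus-Euler-scheme comparison and then close it with a discrete Grönwall estimate. Write both the Kalman--Bucy diffusion $\overline{X}_t$ (driven by $\overline{W},\overline{V}$ and the observation path $Y$, with gain $G_t:=\mathcal{P}_tC^{\top}R^{-1}$, cf. \eqref{eq:vanilla_kbf}) and its Euler discretization $\overline{X}_t^l$ (with frozen gain $G_{\tau_t^l}^l:=P_{\tau_t^l}^lC^{\top}R^{-1}$, cf. \eqref{eq:deter_mean_evol}) in integral form and subtract the two. Upon taking expectations every stochastic-integral contribution vanishes: the $Q^{1/2}\,d\overline{W}$ terms are common to both equations and are martingale increments against deterministic coefficients, the $GR^{1/2}\,d\overline{V}$ terms are likewise mean zero (recall that $\mathcal{P}_t$, hence $G_t$ and $G_{\tau_t^l}^l$, is deterministic, being governed by the Riccati flow), and $\mathbb{E}[dY_t]=C\,\mathbb{E}[X_t]\,dt$. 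The same reduction holds verbatim if one first conditions on $\mathscr{F}_T$, replacing $\mathbb{E}[X_t]$ by $\mathcal{M}_t$. Setting $\bar\mu_t:=\mathbb{E}[\overline{X}_t]$, $\bar\nu_t:=\mathbb{E}[X_t]$ and $\bar e_t:=\bar\mu_t-\mathbb{E}[\overline{X}_t^l]$, one is left comparing
\begin{equation*}
\frac{d}{dt}\bar\mu_t=(A-G_tC)\bar\mu_t+G_tC\bar\nu_t
\end{equation*}
with the perturbed Euler recursion for $\mathbb{E}[\overline{X}^l]$ obtained from \eqref{eq:deter_mean_evol} by taking expectations, in which $G_t$ is replaced by the frozen $G_{\tau_t^l}^l$ and the $\bar\nu$-increments are evaluated at the left endpoint. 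Note $\bar\nu_t=e^{At}\mathcal{M}_0$ is smooth and bounded on $[0,T]$, and $\mathcal{P}_t$ and $P^l$ are bounded (Assumption~2 and the analogous continuous-time stability), so $\bar\mu$ solves a linear ODE with bounded, Lipschitz-in-$t$ coefficients and is $C^2$ with bounded derivatives.

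Next I would bound the three sources of one-step (local) error, each $\mathcal{O}(\Delta_l^2)$. First, the standard Euler truncation remainder for the $C^2$ solution $\bar\mu$ is $\mathcal{O}(\Delta_l^2)$ per step. Second, the gain mismatch: $|G_t-G_{\tau_t^l}^l|\le\mathsf{C}\Delta_l$, obtained by splitting $\mathcal{P}_t-P_{\tau_t^l}^l=(\mathcal{P}_t-\mathcal{P}_{\tau_t^l})+(\mathcal{P}_{\tau_t^l}-P_{\tau_t^l}^l)$ and bounding the first term by Lipschitz-in-$t$ continuity of the (deterministic) Riccati flow on $[0,T]$ and the second by \autoref{lem:p_disc}; multiplied by the $\mathcal{O}(\Delta_l)$ length of the innovation increment and by the bounded quantities $\bar\mu,\bar\nu$ and $\mathbb{E}[\overline{X}^l]$ (the latter uniformly in $l$, via the moment estimate underlying \autoref{lem:xi_lq}), this contributes $\mathcal{O}(\Delta_l^2)$ per step. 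Third, replacing $\mathbb{E}[Y_{(k+1)\Delta_l}-Y_{k\Delta_l}]=\int_{k\Delta_l}^{(k+1)\Delta_l}C\bar\nu_s\,ds$ by $C\bar\nu_{k\Delta_l}\Delta_l$ costs $\mathcal{O}(\Delta_l^2)$ per step by smoothness of $\bar\nu$.

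Assembling these, the error satisfies a recursion of the form $\bar e_{(k+1)\Delta_l}=\big(I+(A-G_{k\Delta_l}^lC)\Delta_l\big)\bar e_{k\Delta_l}+r_{k\Delta_l}$, with $\|r_{k\Delta_l}\|\le\mathsf{C}\Delta_l^2$ (the per-step errors above, together with a term $(G_{k\Delta_l}^l-G_{k\Delta_l})C(\bar\mu_{k\Delta_l}-\bar\nu_{k\Delta_l})\Delta_l$ that is also $\mathcal{O}(\Delta_l^2)$) and $\|I+(A-G_{k\Delta_l}^lC)\Delta_l\|\le 1+\mathsf{C}\Delta_l$ (boundedness of $A,C,R^{-1},P^l$). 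Since $\bar e_0=0$ (coupled, identical initial law) and there are $t\Delta_l^{-1}+k_1=\mathcal{O}(\Delta_l^{-1})$ steps on $[0,t+k_1\Delta_l]\subseteq[0,T]$, a discrete Grönwall argument gives $\|\bar e_{k\Delta_l}\|\le\mathsf{C}\Delta_l^2\sum_j(1+\mathsf{C}\Delta_l)^j\le\mathsf{C}\,e^{\mathsf{C}T}\Delta_l$, which is the asserted bound (componentwise in $j$).

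The main obstacle is bookkeeping rather than any single inequality: one must ensure the boundedness inputs are genuinely in force (Assumption~2 for the gains, $\mu(A)<0$ and the ensuing uniform-in-$l$ moment control of the scheme), that the Riccati flow is Lipschitz in time on the whole interval, and that \autoref{lem:p_disc} is invoked exactly at the left-endpoint grid times, so that the local errors are truly $\mathcal{O}(\Delta_l^2)$ before the $\mathcal{O}(\Delta_l^{-1})$ Grönwall amplification. This weak-error argument is strictly simpler than the strong-error \autoref{lem:strong_error1}, because in expectation all the noise — and in particular the $\overline{V}$-mismatch that drives the strong error — drops out.
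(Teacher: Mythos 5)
Your proposal is correct and follows essentially the same route as the paper's proof: take expectations so that all stochastic-integral contributions vanish, isolate the gain/covariance mismatch and control it via \autoref{lem:p_disc} together with boundedness of the relevant means, and close with a Gr\"{o}nwall argument. The only cosmetic difference is that you run a discrete Gr\"{o}nwall recursion on the Euler scheme for the means with explicit $\mathcal{O}(\Delta_l^2)$ local truncation errors, whereas the paper writes the difference in continuous integral form, splits it into three terms $T_1+T_2+T_3$, and applies the continuous-time Gr\"{o}nwall lemma.
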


\begin{proof}
As before we can separate the above expression in different terms,
$$
\mathbb{E}\Big[\overline{X}_{t+k_1\Delta_{l}}(j)-\overline{X}_{t+k_1\Delta_{l}}^l(j)\Big] = T_1+T_2+T_3,
$$
such that, for $\tau_t^l=\lbrack\tfrac{t}{\Delta_l}\rbrack\Delta_l$, $t\in\mathbb{R}^+$, we have
\begin{eqnarray*}
T_1 & = & \mathbb{E}\Big[\int_{0}^{t+k_1\Delta_{l}}\Big(
\sum_{j_1=1}^{d_x} A(j,j_1)[\overline{X}_s(j_1)-\overline{X}_{\tau_s^l}^l(j_1)] + \sum_{j_1=1}^{d_x}\sum_{j_2=1}^{d_x}
\mathcal{P}_s(j,j_1)\hat{C}(j_1,j_2)[\overline{X}_s(j_2) -\overline{X}_{\tau_s^l}^l(j_2)]
\Big)ds
\Big], \\
T_2 & = & \mathbb{E}\Big[\int_{0}^{t+k_1\Delta_{l}}\Big(
 \sum_{j_1=1}^{d_x}\sum_{j_2=1}^{d_x}[\mathcal{P}_s(j,j_1)-P_{\tau_s^l}^l(j,j_1)]\hat{C}(j_1,j_2)\overline{X}_{\tau_s^l}^l(j_2)]\Big)ds\Big],\\
T_3 & = & \mathbb{E}\Big[\sum^{d_x}_{j_1=1}\sum^{d_y}_{j_2=1}\int_{0}^{t+k_1\Delta_{l}}[\mathcal{P}_{s}(j,j_1)-P_{\tau_s^l}^l(j,j_1)]\tilde{C}(j_1,j_2)dY_s(j_2)\Big],
\end{eqnarray*}
where  $\hat{C}=\tilde{C}C$ with $\tilde{C}=C^{\top}R_2^{-1}$. Now let us consider each individual term, \\
firstly for $T_1$, that \eqref{eq:bound_deter_cov} we have the following bound
\begin{equation}\label{eq:weak_T1}
|T_1| \leq \mathsf{C} \int_{0}^{t+k_1\Delta_{l}}\max_{j\in\{1,\dots,d_x\}}
\Big|\mathbb{E}\Big[\overline{X}_{s}(j)-\overline{X}_{\tau_s^l}^l(j)\Big]\Big|ds.
\end{equation}
For $T_2$, We can apply \autoref{lem:p_disc}, using the fact that $\max_{j_2\in\{1,\dots,d_x\}}|\mathbb{E}[\overline{X}_{\tau_s^l}^l(j_2)]|\leq \mathsf{C}$
we have
\begin{equation}\label{eq:weak_T2}
|T_2| \leq \mathsf{C}\Delta_l.
\end{equation}
Similarly for $T_3$, we can use \autoref{lem:p_disc} and  \autoref{lem:y_inc} which provides the bound
\begin{equation}\label{eq:weak_T3}
|T_3| \leq \mathsf{C}\Delta_l.
\end{equation}

Thus combining \eqref{eq:weak_T1}-\eqref{eq:weak_T3} leads to
$$
\max_{j\in\{1,\dots,d_x\}}\Big|\mathbb{E}\Big[\overline{X}_{t+k_1\Delta_{l}}(j)-\overline{X}_{t+k_1\Delta_{l}}^l(j)\Big]\Big| \leq \mathsf{C}\Delta_l + 
\mathsf{C} \int_{0}^{t+k_1\Delta_{l}}\max_{j\in\{1,\dots,d_x\}}
\Big|\mathbb{E}\Big[\overline{X}_{s}(j)-\overline{X}_{\tau_s^l}^l(j)\Big]\Big|ds.
$$
Finally by applying Gr\"{o}nwall's lemma, leads to the desired result.
\end{proof}

We now proceed with our result of the discretized NC estimator, which is the strong error, through the following lemma.
\begin{lem}
\label{lem:strong_error}
For any $T\in\mathbb{N}$ fixed and $t\in[0,T-1]$ there exists a $\mathsf{C}<+\infty$ such that for any $(l,k_1)\in\mathbb{N}_0\times\{0,1,\dots,\Delta_{l}^{-1}\}$:
$$
\mathbb{E}\Big[\Big([\overline{U}_{t+k_1\Delta_l}(Y) - \overline{U}_{t+k_1\Delta_l}^{l}(Y)]\Big)^2\Big]\leq  \mathsf{C} \Delta^2_l.
$$
\end{lem}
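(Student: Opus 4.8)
The plan is to reproduce, in the time-discretised setting, the martingale/remainder split already used for \autoref{lem:sl_nc}, but now measuring the discrepancy against the exact Kalman--Bucy mean $\mathcal{M}$ (rather than the $N$-particle mean) and keeping the time-discretisation bias explicit. Writing the discrete sum in \eqref{eq:disc_nc_estimate} as a step-function integral and setting $\tau_s^l=\lfloor s/\Delta_l\rfloor\Delta_l$, one has
$$
\overline{U}_{t+k_1\Delta_l}(Y) - \overline{U}_{t+k_1\Delta_l}^{l}(Y) = \int_0^{t+k_1\Delta_l}\big\langle C(\mathcal{M}_s - m_{\tau_s^l}^l),\,R^{-1}dY_s\big\rangle - \frac12\int_0^{t+k_1\Delta_l}\big[\langle\mathcal{M}_s,S\mathcal{M}_s\rangle - \langle m_{\tau_s^l}^l,Sm_{\tau_s^l}^l\rangle\big]\,ds .
$$
I would then split $\mathcal{M}_s - m_{\tau_s^l}^l = (\mathcal{M}_s-\mathcal{M}_{\tau_s^l}) + (\mathcal{M}_{\tau_s^l}-m_{\tau_s^l}^l)$, thereby separating a ``mean-approximation'' contribution from a ``pure time-discretisation'' contribution; by the $C_q$-inequality with $q=2$ it suffices to bound the second moment of each by $\mathsf{C}\Delta_l^2$, uniformly in $l$ and in $k_1\in\{0,\dots,\Delta_l^{-1}\}$ (using $t+k_1\Delta_l\le T$).

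For the mean-approximation contribution I would first note that, conditioning on the observation filtration and applying Jensen's inequality, \autoref{lem:strong_error1} and \autoref{lem:weak_error1} transfer verbatim to the conditional means, giving $\mathbb{E}[|\mathcal{M}_{\tau_s^l}(j)-m_{\tau_s^l}^l(j)|^2]\le\mathsf{C}\Delta_l^2$ and $|\mathbb{E}[\mathcal{M}_{\tau_s^l}(j)-m_{\tau_s^l}^l(j)]|\le\mathsf{C}\Delta_l$. Substituting $dY_s=CX_s\,ds+R^{1/2}dV_s$, the $ds$-integral is controlled by Cauchy--Schwarz together with the moment bounds \autoref{lem:mean}--\autoref{lem:xi_lq} and \cite[Eq.~(54)]{DT18}, the stochastic integral against $dV$ by the It\^o isometry (or Burkholder--Davis--Gundy, as in the proof of \autoref{lem:sl_nc}), and the quadratic term via the identity $\langle\mathcal{M}_{\tau_s^l},S\mathcal{M}_{\tau_s^l}\rangle-\langle m_{\tau_s^l}^l,Sm_{\tau_s^l}^l\rangle = \langle\mathcal{M}_{\tau_s^l}-m_{\tau_s^l}^l,\,S(\mathcal{M}_{\tau_s^l}+m_{\tau_s^l}^l)\rangle$ followed again by Cauchy--Schwarz and moment bounds. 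In every case the factor $\Delta_l^2$ coming from the strong error of the mean is what survives, so this contribution is $\le\mathsf{C}\Delta_l^2$ in second moment.

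The pure time-discretisation contribution, $\int_0^{t+k_1\Delta_l}\langle C(\mathcal{M}_s-\mathcal{M}_{\tau_s^l}),R^{-1}dY_s\rangle - \tfrac12\int_0^{t+k_1\Delta_l}[\langle\mathcal{M}_s,S\mathcal{M}_s\rangle-\langle\mathcal{M}_{\tau_s^l},S\mathcal{M}_{\tau_s^l}\rangle]\,ds$, is the crux, and I expect it to be the main obstacle: term by term the increment $\mathcal{M}_s-\mathcal{M}_{\tau_s^l}$ carries a genuine $\mathcal{O}(\Delta_l^{1/2})$ stochastic part, so the strong error cannot simply be pulled out and one must exploit cancellation. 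Here I would expand $\mathcal{M}_s-\mathcal{M}_{\tau_s^l}$ using the Kalman--Bucy equation \eqref{eq:kbf} together with \eqref{eq:data}, writing it as a $du$-integral of an adapted process with bounded moments plus a stochastic integral $\int_{\tau_s^l}^s\mathcal{P}_uC^{\top}R^{-1}R^{1/2}\,dV_u$ with deterministic coefficient, and likewise expand $\langle\mathcal{M}_s,S\mathcal{M}_s\rangle-\langle\mathcal{M}_{\tau_s^l},S\mathcal{M}_{\tau_s^l}\rangle$ by It\^o's formula; substituting both into the display above organises the error per time-cell into (i) iterated $du\,ds$-integrals of bounded-moment integrands, which are $\mathcal{O}(\Delta_l^2)$ per cell and, after a Minkowski summation over the $\Delta_l^{-1}$ cells, $\mathcal{O}(\Delta_l^2)$ in second moment; (ii) iterated stochastic integrals confined to a single cell, which have zero conditional mean given $\mathscr{F}_{\tau_s^l}$, so that distinct cells are orthogonal and the $\Delta_l^{-1}$ of them contribute $\Delta_l^{-1}\cdot\mathcal{O}(\Delta_l^3)=\mathcal{O}(\Delta_l^2)$; and (iii) the quadratic-variation term, whose conditional mean over a cell is $\mathcal{O}(\Delta_l^2)$ and which, by the additive-noise structure of the Kalman--Bucy diffusion, is matched up to $\mathcal{O}(\Delta_l^2)$ remainders by the It\^o correction arising in the expansion of $\langle\mathcal{M}_s,S\mathcal{M}_s\rangle$, so that only a deterministic bias of size $\mathcal{O}(\Delta_l)$ remains and contributes $\mathcal{O}(\Delta_l^2)$ in second moment. \autoref{lem:p_disc} is invoked wherever $\mathcal{P}_u$ must be replaced by $P_{\tau_u^l}^l$, and \autoref{lem:weak_error1} wherever residual mean terms appear; assembling the three types with the mean-approximation bound through the $C_q$-inequality yields the claimed $\mathsf{C}\Delta_l^2$.
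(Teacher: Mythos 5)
Your skeleton --- recasting both estimators as integrals against $dY_s$ and $ds$ with integrand $\mathcal{M}_s-m^l_{\tau_s^l}$ and then performing a martingale/remainder split --- is the same as the paper's, and your handling of the grid-point discrepancy $\mathcal{M}_{\tau_s^l}-m^l_{\tau_s^l}$ via conditional Jensen applied to \autoref{lem:strong_error1} and \autoref{lem:weak_error1} is sound. The gap sits exactly where you predicted the crux would be, and your proposed resolution does not close it. In item (ii) you assert that the cell-local iterated stochastic integrals have second moment $\mathcal{O}(\Delta_l^3)$ per cell, so that orthogonality over the $\Delta_l^{-1}$ cells yields $\mathcal{O}(\Delta_l^2)$ in total. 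The exponent is off by one power of $\Delta_l$: the leading such term is $\int_{k\Delta_l}^{(k+1)\Delta_l}\langle C(\mathcal{M}_s-\mathcal{M}_{k\Delta_l}),R^{-1/2}\,dV_s\rangle$, and since the increment of the conditional mean carries, via \eqref{eq:kbf}, the stochastic part $\int_{k\Delta_l}^{s}\mathcal{P}_uC^{\top}R^{-1}(dY_u-C\mathcal{M}_u\,du)$ whose second moment is generically of order $s-k\Delta_l$ (not $(s-k\Delta_l)^2$), the It\^{o} isometry gives $\int_{k\Delta_l}^{(k+1)\Delta_l}\mathcal{O}(s-k\Delta_l)\,ds=\mathcal{O}(\Delta_l^2)$ per cell, hence $\mathcal{O}(\Delta_l)$ after summing the orthogonal cells --- a full power short of the target. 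The It\^{o}-correction/quadratic-variation matching of your item (iii) controls the conditional \emph{mean} of these cell contributions, not their variance; and the $ds$-remainders, of $L^2$ norm $\mathcal{O}(\Delta_l)$, cannot cancel a martingale of $L^2$ norm $\mathcal{O}(\Delta_l^{1/2})$ through the cross term. As it stands, your decomposition delivers only $\mathsf{C}\Delta_l$, not $\mathsf{C}\Delta_l^2$.

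The paper's proof never meets this term: it does not split off $\mathcal{M}_s-\mathcal{M}_{\tau_s^l}$ at all, but bounds the whole integrand $m_s-m^l_{\tau_s^l}$ by identifying it with $\mathbb{E}[\overline{X}_s]-\mathbb{E}[\overline{X}^l_{\tau_s^l}]$ and invoking the weak error \autoref{lem:weak_error1}, so that the integrand is $\mathcal{O}(\Delta_l)$ pointwise and each of the three terms (the $dV_s$-martingale, the $X_s\,ds$-remainder and the quadratic remainder) is $\mathcal{O}(\Delta_l^2)$ in mean square, with no cell-by-cell cancellation required. Whether that identification is legitimate for the $\mathscr{F}_s$-conditional mean that actually enters \eqref{eq:nc} is a question you are implicitly (and reasonably) raising by declining to pull a rate out of the $dV$-integral so cheaply; but as written your argument does not establish the stated bound, and repairing it would require either the paper's shortcut or a genuine cancellation mechanism for the within-cell martingale increment of $\mathcal{M}$, which items (ii)--(iii) do not supply.
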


\begin{proof}
Let us first recall that, 
\begin{eqnarray*}
\overline{U}^l_{t+k_1\Delta_{l}}(Y) &=& \sum^{t\Delta^{-1}_l+k_1-1}_{k=0}\langle Cm_{k\Delta_l},R^{-1}[Y_{(k+1)\Delta_l} - Y_{k\Delta_l}]\rangle - \frac{\Delta_l}{2}\sum^{t\Delta^{-1}_l+k_1-1}_{k=0} \langle m_{k\Delta_l},S m_{k\Delta_l} \rangle, \\
\overline{U}_{t+k_1\Delta_{l}}(Y) &=& \int^{t+k_1\Delta_l}_0 \Big[\langle Cm_s,R^{-1}dY_s\rangle - \frac{1}{2}\langle m_s,Sm_s\rangle ds\Big].
\end{eqnarray*}
In order to proceed we again consider a martingale-remainder type decomposition. Therefore by setting $\tau_t^l=\lbrack\tfrac{t}{\Delta_l}\rbrack\Delta_l$, $t\in\mathbb{R}^+$, and expanding on the angle brackets, we have
 \begin{eqnarray*}
M_{t+k_1\Delta_l}(1) &=&\sum_{j_1=1}^{d_y}\sum_{j_2=1}^{d_x}\sum_{j_3=1}^{d_y} \int^{t+k_1\Delta_l}_0 R^{-1}(j_1,j_3) C(j_1,j_2)\Big[ m_s(j_2) -  m^l_{\tau_s^l}(j_2) \Big]R^{1/2}(j_1,j_3)dV_s(j_3), \\
R_{t+k_1\Delta_l} &=&\sum_{j_1=1}^{d_x}\sum_{j_2=1}^{d_x}\int^{t+k_1\Delta_l}_{0} \frac{S(j_1,j_2)}{2}\Big[ m_s(j_1)m_s(j_2) -  m^l_{\tau_s^l}(j_1)m^l_{\tau_s^l}(j_2) \Big]ds,\\
R_{t+k_1\Delta_l}(1)&=&\sum_{j_1=1}^{d_y}\sum_{j_2=1}^{d_x}\sum_{j_3=1}^{d_y} \int^{t+k_1\Delta_l}_0 R^{-1}(j_1,j_3) C(j_1,j_2)\Big[ m_s(j_1) -  m^l_{\tau_s^l}(j_1) \Big]CX_s(j_3)ds,
\end{eqnarray*}
where we have used the formula for the observational process \eqref{eq:data}, combined both remainder terms into one, and taken the scaled Brownian motion $V_t$. Let us first consider the remainder term of $R_{t+k_1\Delta_l}(1)$.  Through Jensen's inequality we have
 \begin{eqnarray*}
   \mathbb{E}[R_{t+k_1\Delta_l}(1)]^2 &=& \mathbb{E}\Big[ \sum_{j_1=1}^{d_y}\sum_{j_2=1}^{d_x}\sum_{j_3=1}^{d_y} \int^{t+k_1\Delta_l}_0 R^{-1}(j_1,j_3) C(j_1,j_2)\Big[ m_s(j_2) -  m^l_{\tau_s^l}(j_2) \Big]C X_s(j_3)ds\Big]^2 \\
&\leq& \sum_{j_1=1}^{d_y}\sum_{j_2=1}^{d_x}\sum_{j_3=1}^{d_y}\sum_{j_4=1}^{d_x}R^{-2}(j_1,j_3)C^2(j_1,j_2) \int^{t+k_1\Delta_l}_0 \mathbb{E}\Big[ [m_s(j_2) - m^l_{\tau_s}(j_2)]X_s(j_4)\Big]^2 ds. \\
 \end{eqnarray*}
 Then by using $d^2_xd^2_y$ applications of the $C_2$-inequality we get
  \begin{eqnarray*}
     \mathbb{E}[R_{t+k_1\Delta_l}(1)]^2 &\leq& \mathsf{C}\Big( \int^{t+k_1\Delta_l}_0 \max_{j_2 \in \{1,\ldots,d_x\}}  \mathbb{E}[m_s(j_2) - m^l_{\tau^l_s}(j_2)]^2 \max_{j_4 \in \{1,\ldots,d_x\}}\mathbb{E}[X_s(j_4)]^2 ds \Big).
  \end{eqnarray*}
  We know that $\max_{j_4 \in \{1,\ldots,d_x\}}\mathbb{E}[X_s(j_4)]^2 \leq \mathsf{C}$, therefore all we need is to bound $| m_s(j_1) - m^l_{\tau^l_s}(j_1)|^2$. Therefore by using the fact that $\mathbb{E}[\overline{X}_s(j)] = m_s(j)$ and $  \mathbb{E}[\overline{X}^l_{\tau^l_s}(j)] = m^l_{\tau^l_s}(j) $, we can use the weak error, i.e. \autoref{lem:weak_error1}, to conclude that $\mathbb{E}[R_{t+k_1\Delta_l}(1)]^2 \leq \mathsf{C}\Delta^2_l$.
  \\\\
  Now to proceed with $R_{t+k_1\Delta_l}(Y)$, we can split the \textit{difference of the mean} term
  \begin{eqnarray}
  \nonumber
m_s(j_1)m_s(j_2) - m^l_{\tau^l_s}(j_1)m^l_{\tau^l_s}(j_2) &=& m_s(j_1)m_s(j_2) - m^l_{\tau^l_s}(j_1)m_s(j_2) + m^l_{\tau^l_s}(j_1)m_s(j_2) - m^l_{\tau^l_s}(j_1)m^l_{\tau^l_s}(j_2) \\
\label{eq:bias_mean}
&=& \{m_s(j_1) - m^l_{\tau^l_s}(j_1)\}m_s(j_2) + m^l_{\tau^l_s}(j_1)\{m_s(j_2) - m^l_{\tau^l_s}(j_2)\}.
   \end{eqnarray}
Therefore one can substitute \eqref{eq:bias_mean} into $R_t(Y)$, and by Jensen's and the $C_2$-inequality, results in

   \begin{eqnarray}
   \nonumber
   \mathbb{E}[R_{t+k_1\Delta_l}]^2  &=& \mathbb{E}\Big[ \sum_{j_1=1}^{d_x}\sum_{j_2=1}^{d_x}\int^{t+k_1\Delta_l}_{0} \frac{S(j_1,j_2)}{2}\Big[ \{m_s(j_1) - m^l_{\tau^l_s}(j_1)\}m_s(j_2) + m^l_{\tau^l_s}(j_1)\{m_s(j_2) - m^l_{\tau^l_s}(j_2)\}\Big]ds\Big]^2 \\
   \nonumber
   &\leq& \sum_{j_1=1}^{d_x}\sum_{j_2=1}^{d_x} \frac{S^2(j_1,j_2)}{2} \int^{t+k_1\Delta_l}_0 \mathbb{E}\Big[\{m_s(j_1) - m^l_{\tau^l_s}(j_1)\}m_s(j_2)\Big]^2ds \\
\label{eq:T1_bias}
   &+& \sum_{j_1=1}^{d_x}\sum_{j_2=1}^{d_x} \frac{S^2(j_1,j_2)}{2} \int^{t+k_1\Delta_l}_0 \mathbb{E}\Big[m^l_{\tau^l_s}(j_1)\{m_s(j_2) - m^l_{\tau^l_s}(j_2)\}\Big]^2ds.
     \end{eqnarray}
Therefore by again taking the $C_2$-inequality $d^2_x$ times, and noting that \\ $ \max_{j_2\in\{1,\dots,d_x\}}m_s(j_2) = \max_{j_2\in\{1,\dots,d_x\}}\mathbb{E}[\overline{X}_s(j_2)] \leq \mathsf{C}$, from \autoref{lem:mean} and  similarly done for $m^l_{\tau^l_s}(j_1)$, leads to
   \begin{eqnarray*}
  \mathbb{E}[R_{t+k_1\Delta_l}]^2 \leq \mathsf{C} \Big( \int^{t+k_1\Delta_l}_0 \max_{j_1 \in \{1,\ldots,d_x\}} \mathbb{E}\Big[m_s(j_1) - m^l_{\tau^l_s}(j_1)\Big]^2 +  \max_{j_2 \in \{1,\ldots,d_x\}}
  \mathbb{E} \Big[m_s(j_2) - m^l_{\tau^l_s}(j_2)\Big]^2 ds \Big).
   \end{eqnarray*}
   Through the same substitution as before, and using the weak error, i.e. \autoref{lem:weak_error1},\\ we have that $ \mathbb{E}[R_{t+k_1\Delta_l}]^2 \leq \mathsf{C}\Delta^2_l$.
   \\\\
  Lastly we have the martingale term $M_{t+k_1\Delta_l}(1)$. As before we can apply Jensens inequality 
 \begin{eqnarray*}
 \mathbb{E}[M_{t+k_1\Delta_l}(1)]^2  &=& \mathbb{E}\Big[\sum_{j_1=1}^{d_y}\sum_{j_2=1}^{d_x}\sum_{j_3=1}^{d_y} \int^{t+k_1\Delta_l}_0 R^{-1}(j_1,j_3) C(j_1,j_2)\Big[ m_s(j_2) -  m^l_{\tau_s^l}(j_2) \Big]R^{1/2}(j_1,j_3)dV_s(j_3)\Big]^2 \\
& \leq &\sum_{j_1=1}^{d_y}\sum_{j_2=1}^{d_x}\sum_{j_3=1}^{d_y} {R}^{-1}(j_1,j_3) \ C^2(j_1,j_2)\int^{t+k_1\Delta_l}_0\mathbb{E}\Big[ m_s(j_2) -  m^l_{\tau_s^l}(j_2) dV_s(j_3) \Big]^2.
 \end{eqnarray*}
Then by using the Ito isometry and $d_xd^2_y$ applications of the $C_2$-inequality, we have 
\begin{eqnarray*}
\mathbb{E}[M_{t+k_1\Delta_l}(1)]^2 & \leq& \mathsf{C} \int^{t+k_1\Delta_l}_0 \max_{j_2 \in \{1,\ldots,d_x\}} \mathbb{E} \Big[m_s(j_2) - m^l_{\tau^l_s}(j_2)\Big]^2ds,
\end{eqnarray*}
         and, as before, by using \autoref{lem:weak_error1} as done for $R_{t+k_1\Delta_l}$, we can conclude that $ \mathbb{E}[M_{t+k_1\Delta_l}(1)]^2 \leq \mathsf{C} \Delta^2_l$. 
   Therefore by combining all terms and a further application of the $C_2$-inequality three times
   \begin{eqnarray*}
\mathbb{E}\Big[\Big([\overline{U}_{t+k_1\Delta_l}(Y) - \overline{U}_{t+k_1\Delta_l}^{l}(Y)]\Big)^2\Big] &\leq& \mathbb{E}[M_{t+k_1\Delta_l}(1)]^2 +    \mathbb{E}[R_{t+k_1\Delta_l}(1)]^2 +    \mathbb{E}[R_{t+k_1\Delta_l}]^2 \\
   &\leq& \mathsf{C}\Delta^2_l.
   \end{eqnarray*}
   
\end{proof}

\section{Analysis for i.i.d. MLEnKBF NC Estimator}
 \label{Appendix:C}
We now discuss the analysis, related to the variance, of both the NC estimator using the EnKBF and the i.i.d. MLEnKBF. This will lead onto the proof of 
our main result, presented as \autoref{theo:main}. We note that in our notations, we extend the case of the discretized EnKBF, to the discretized MLEnKBF, 
by adding superscripts $l$ as above. Specifically the analysis now considers the i.i.d. couple particle system
\begin{align*}
\zeta_{(k+1)\Delta_l}^{i,l} & = \zeta_{k\Delta_l}^{i,l} + A\zeta_{k\Delta_l}^{i,l}\Delta_l + Q^{1/2} [\overline{W}_{(k+1)\Delta_l}^i-\overline{W}_{k\Delta_l}^i]  
\\&+ P^{N,l}_{k \Delta_l} C^{\top}R^{-1}\Big([Y^{{i}}_{(k+1)\Delta_l}-Y^{{i}}_{k\Delta_l}]
-\Big[C\zeta_{k\Delta_l}^{i,l}\Delta_l + R^{1/2}[\overline{V}_{(k+1)\Delta_l}^i-\overline{V}_{k\Delta_l}^i]\Big]\Big), \\
\zeta_{(k+1)\Delta_{l-1}}^{i,l-1} & =  \zeta_{k\Delta_{l-1}}^{i,l-1} + A\zeta_{k\Delta_{l-1}}^{i,l-1}\Delta_{l-1} + Q^{1/2} [\overline{W}_{(k+1)\Delta_{l-1}}^i-\overline{W}_{k\Delta_{l-1}}^i] 
\\&+ P^{N,l-1}_{k \Delta_{l-1}} C^{\top}R^{-1}\Big([Y^{{i}}_{(k+1)\Delta_{l-1}}-Y^{{i}}_{k\Delta_{l-1}}] -\Big[C\zeta_{k\Delta_{l-1}}^{i,l-1}\Delta_{l-1}+ R^{1/2}[\overline{V}_{(k+1)\Delta_{l-1}}^i-\overline{V}_{k\Delta_{l-1}}^i]\Big]\Big), 
\end{align*}
 within the NC estimator.

We will use the fact that the i.i.d. system coincides with the Kalman--Bucy Diffusion $\overline{X}_t$, in the limit of $N \rightarrow \infty$. 
This implies the mean and covariance are defined through the Kalman--Bucy filter and the Ricatti equations, which allows us to use
the results from \autoref{Appendix:B}, for the process $\zeta^i_{k\Delta_l}$.

\subsection{MSE bound on EnKBF NC estimator}

Here we use the notation:  for a $d_x-$dimensional vector $x$ denote $\|x\|_2=(\sum_{j=1}^{d_x}x(j)^2)^{1/2}$.
\begin{proposition}\label{prop:var_term1}
{For any $T\in\mathbb{N}$ fixed and $t\in[0,T-1]$ there exists a $\mathsf{C}<+\infty$ such that for any $(l,N,k_1)\in\mathbb{N}_0\times\{2,3,\dots\}\times\{0,1,\dots,\Delta_l^{-1}\}$:
$$
\mathbb{E}\Big[\Big\|[\widehat{\overline{U}}_{t+k_1\Delta_l}^{N,l}-\overline{U}_{t+k_1\Delta_l}](Y)\Big\|_2^2\Big] \leq \mathsf{C}\Big(\frac{1}{N}+\Delta_l^2\Big).
$$}
\end{proposition}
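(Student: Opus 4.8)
The plan is to decompose the error into a statistical (propagation-of-chaos) contribution and a discretization-bias contribution,
$$
\widehat{\overline{U}}_{t+k_1\Delta_l}^{N,l}(Y) - \overline{U}_{t+k_1\Delta_l}(Y) = \big[\widehat{\overline{U}}_{t+k_1\Delta_l}^{N,l}(Y) - \overline{U}_{t+k_1\Delta_l}^{l}(Y)\big] + \big[\overline{U}_{t+k_1\Delta_l}^{l}(Y) - \overline{U}_{t+k_1\Delta_l}(Y)\big],
$$
and to invoke the $C_2$-inequality, so that it suffices to bound each squared $\mathbb{L}_2$-term separately. The second bracket is handled immediately: $\widehat{\overline{U}}^{N,l}$ is built from the i.i.d.\ system \eqref{eq:enkf_is}, whose $N\to\infty$ limit is precisely the discretized Kalman--Bucy diffusion, so this bracket is the quantity estimated in \autoref{lem:strong_error}, giving $\mathbb{E}[(\,\cdot\,)^2]\le\mathsf{C}\Delta_l^2$ with $\mathsf{C}$ independent of $l$ and $N$. (Here $\overline{U}$ is scalar-valued, so $\|\cdot\|_2^2=|\cdot|^2$.)

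For the first bracket I would reuse the martingale--remainder decomposition from the proof of \autoref{lem:sl_nc}, now with $m^N_{k\Delta_l}=\tfrac1N\sum_{i=1}^N\zeta^i_{k\Delta_l}$, writing the difference as $M^l_{t+k_1\Delta_l}(1)+R^l_{t+k_1\Delta_l}(1)+R^l_{t+k_1\Delta_l}$. The key input is the i.i.d.\ analogue of \autoref{lem:mean} \emph{with a constant uniform in $k,l$}: conditionally on $\mathscr{F}$ the particles $\zeta^i_{k\Delta_l}$ are i.i.d.\ $\mathcal{N}_{d_x}(m_{k\Delta_l},P_{k\Delta_l})$, so by the Marcinkiewicz--Zygmund inequality applied conditionally together with the uniform covariance bound \eqref{eq:bound_deter_cov}, one obtains $\mathbb{E}[|m^N_{k\Delta_l}(j)-m_{k\Delta_l}(j)|^q]^{1/q}\le \mathsf{C}N^{-1/2}$. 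The two remainder terms carry an extra factor $\Delta_l$, so each of their $t\Delta_l^{-1}+k_1\le T\Delta_l^{-1}$ summands has $\mathbb{L}_2$-size $\mathcal{O}(\Delta_l N^{-1/2})$ (using also the $\mathbb{L}_q$-bounds on $X_s$ from \cite{DT18} and the boundedness of $m_{k\Delta_l}$), and a direct Minkowski bound gives $\mathbb{E}[(R^l(1))^2]+\mathbb{E}[(R^l)^2]\le\mathsf{C}/N$ uniformly in $l$.

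The main obstacle is the martingale term $M^l_{t+k_1\Delta_l}(1)$: a naive Minkowski bound over $\Delta_l^{-1}$ summands would only deliver $\mathcal{O}(\Delta_l^{-1}N^{-1})$ after squaring, which is not good enough for a constant uniform in $l$. I would instead split the innovation increment $[Y_{(k+1)\Delta_l}-Y_{k\Delta_l}]-CX_{k\Delta_l}\Delta_l$ into its Brownian part $R^{1/2}[V_{(k+1)\Delta_l}-V_{k\Delta_l}]$ and the small drift residual $\int_{k\Delta_l}^{(k+1)\Delta_l}C(X_s-X_{k\Delta_l})\,ds$. The summands built on the Brownian part are conditionally mean-zero and orthogonal in $\mathbb{L}_2$ — $m^N_{k\Delta_l}-m_{k\Delta_l}$ is measurable with respect to $\mathscr{F}_{k\Delta_l}$ together with the particle noises up to $k\Delta_l$, while $V_{(k+1)\Delta_l}-V_{k\Delta_l}$ is independent of that — so summing squares gives $\sum_k \mathbb{E}[|m^N_{k\Delta_l}-m_{k\Delta_l}|^2]\,\mathbb{E}[|V_{(k+1)\Delta_l}-V_{k\Delta_l}|^2]\le (T\Delta_l^{-1})\cdot\mathsf{C}N^{-1}\cdot\mathsf{C}\Delta_l=\mathcal{O}(N^{-1})$ uniformly in $l$. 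The drift-residual summands have $\mathbb{L}_2$-size $\mathcal{O}(\Delta_l^{3/2}N^{-1/2})$ (using a strong increment estimate $\|X_s-X_{k\Delta_l}\|_{\mathbb{L}_2}=\mathcal{O}(\Delta_l^{1/2})$, or even just the boundedness of $\|X_s\|_{\mathbb{L}_2}$), so Minkowski over $\Delta_l^{-1}$ terms gives $\mathcal{O}(\Delta_l^{1/2}N^{-1/2})$, i.e.\ $\mathcal{O}(N^{-1})$ after squaring. Collecting the three pieces yields $\mathbb{E}[(\widehat{\overline{U}}^{N,l}_{t+k_1\Delta_l}-\overline{U}^l_{t+k_1\Delta_l})^2]\le\mathsf{C}/N$, and combining with the bias bound from \autoref{lem:strong_error} completes the proof.
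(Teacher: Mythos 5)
Your proposal is correct and its skeleton is exactly the paper's: the same $C_2$-split into the sampling error $\widehat{\overline{U}}^{N,l}-\overline{U}^{l}$ and the discretization bias $\overline{U}^{l}-\overline{U}$, with the bias dispatched by Jensen and \autoref{lem:strong_error}. Where you genuinely diverge is the sampling-error term: the paper simply asserts the $\mathsf{C}/N$ bound \eqref{eq:first_prop_2} as a ``standard result for i.i.d.\ sampling'' obtainable ``through a simple application of the general Minkowski inequality,'' adding only the remark that \eqref{eq:bound_deter_cov} is crucial to keep the constant from exploding in $l$. You instead redo the martingale--remainder decomposition of \autoref{lem:sl_nc} for the $\zeta$-system and — correctly — observe that plain Minkowski over the $\mathcal{O}(\Delta_l^{-1})$ summands of the martingale part only yields $\mathcal{O}(\Delta_l^{-1}N^{-1})$ after squaring, which is \emph{not} uniform in $l$; your fix (split the innovation into its Brownian increment plus a drift residual, exploit conditional mean-zero/orthogonality of the Brownian summands so that second moments add, and use the conditional i.i.d.\ Gaussian structure of $\zeta^i_{k\Delta_l}$ together with \eqref{eq:bound_deter_cov} to get $\mathbb{E}[|m^N-m|^2]\leq\mathsf{C}N^{-1}$ uniformly in $k,l$) is sound and supplies exactly the detail the paper elides. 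In short, your route buys a rigorous, $l$-uniform proof of \eqref{eq:first_prop_2} at the cost of a longer argument, and it makes explicit why the uniform covariance bound is the decisive hypothesis; the paper's route is shorter but leaves precisely this step to the reader.
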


\begin{proof}
Using the $C_2-$inequality one  has
\begin{align}
\mathbb{E}\Big[\Big\|[\widehat{\overline{U}}_{t+k_1\Delta_l}^{N,l}-{\overline{U}}_{t+k_1\Delta_l}](Y)\Big\|_2^2\Big] &\leq 
\label{eq:first_prop_1}
\mathsf{C}\Big(
\mathbb{E}\Big[\Big\|[\widehat{\overline{U}}_{t+k_1\Delta_l}^{N,l}-{\overline{U}}_{t+k_1\Delta_l}^l](Y)\Big\|_2^2\Big] + 
\mathbb{E}\Big[\Big\|[{\overline{U}}_{t+k_1\Delta_l}^{l}-{\overline{U}}_{t+k_1\Delta_l}](Y)\Big\|_2^2\Big] 
\Big).
\end{align}

{The first term on the R.H.S.~can be controlled by standard results for i.i.d.~sampling (recall that $\zeta_{t+k_1\Delta_l}^i|\mathscr{F}_{t+k_1\Delta_l}$ are i.i.d.~Gaussian with mean $m_{t+k_1\Delta_l}$ and covariance $P_{t+k_1\Delta_l}$), that is
\begin{equation}\label{eq:first_prop_2}
\mathbb{E}\Big[\Big\|[{\widehat{\overline{U}}}_{t+k_1\Delta_l}^{N,l}-{\overline{U}}_{t+k_1\Delta_l}^l](Y)\Big\|_2^2\Big] \leq \frac{\mathsf{C}}{N}.
\end{equation}
{The formula in \eqref{eq:first_prop_2} can be proved by using the formulae for the NC estimators, in the usual integral form, and through a simple
application of the general Minkowski inequality.}
\\
Note that it is crucial that \eqref{eq:bound_deter_cov} holds, otherwise the upper-bound can explode as a function of $l$.}
For the right-most term on the R.H.S.~of \eqref{eq:first_prop_1} by Jensen's inequality and \autoref{lem:strong_error}:
\begin{equation}\label{eq:main_bias}
\mathbb{E}\Big[\Big\|[{\overline{U}}_{t+k_1\Delta_l}^{l}-{\overline{U}}_{t+k_1\Delta_l}](Y)\Big\|_2^2\Big]  \leq \mathsf{C}\Delta_l^2.
\end{equation}
So the proof can be concluded by combining \eqref{eq:first_prop_1}, \eqref{eq:first_prop_2} and \eqref{eq:main_bias}.

\end{proof}

\subsection{Variance of i.i.d. MLEnKBF NC estimator }

\begin{proposition}
\label{prop:var_term2}{
For any  $(t,q)\in \mathbb{N}_0 \times [1,\infty)$, there exists a $\mathsf{C}<+\infty$ such that for any $(l,N,k_1)\in\mathbb{N}\times\{2,3,\dots\}\times\{0,1,\dots,\Delta_{l-1}^{-1}-1\}$:
\begin{equation}
\label{eq:mul}
\mathbb{E}\Big[\Big|[\widehat{\overline{U}}_{t+k_1\Delta_{l-1}}^{N,l}-\widehat{\overline{U}}_{t+k_1\Delta_{l-1}}^{N,l-1}] - [{\overline{U}}_{t+k_1\Delta_{l-1}}^{l}-{\overline{U}}_{t+k_1\Delta_{l-1}}^{l-1}]\Big|^q\Big]^{1/q} \leq \frac{\mathsf{C}\Delta^{1/2}_l}{\sqrt{N}}.
\end{equation}}
\end{proposition}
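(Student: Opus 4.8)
The plan is to replay the martingale–remainder decomposition from the proof of Proposition~\ref{lem:sl_nc}, but now for the cross-level difference, and to extract the extra factor $\Delta_l^{1/2}$ from the strong coupling between the level-$l$ and level-$(l-1)$ i.i.d.\ ensembles. Write $T:=t+k_1\Delta_{l-1}$, $\tau_s^l:=\lfloor s/\Delta_l\rfloor\Delta_l$, and $D^{N,l}_s:=m^{N,l}_s-m^l_s$ for the fluctuation of the ensemble mean about the deterministic recursion \eqref{eq:deter_mean_evol}; recall that, conditionally on $\mathscr F$, the $\zeta^{i,l}_s$ are i.i.d.\ with mean $m^l_s$, and that the discretized covariance $P^l_{k\Delta_l}$ is a deterministic sequence. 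Since the quantity in \eqref{eq:mul} equals $[\widehat{\overline U}^{N,l}_T-\overline U^l_T]-[\widehat{\overline U}^{N,l-1}_T-\overline U^{l-1}_T]$, I would expand each bracket as in Proposition~\ref{lem:sl_nc}: $\widehat{\overline U}^{N,l}_T-\overline U^l_T=M^l(1)+R^l(1)+R^l$, where $M^l(1)$ is driven by the Brownian part of the observation increment, $R^l(1)$ carries the drift part $CX_{k\Delta_l}\Delta_l$, and $R^l$ comes from the quadratic form $\langle\cdot,S\cdot\rangle$, each being linear (resp.\ bilinear) in $D^{N,l}$. It then suffices to bound $M^l(1)-M^{l-1}(1)$, $R^l(1)-R^{l-1}(1)$, $R^l-R^{l-1}$ separately and combine with the $C_q$-inequality.

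The engine of the whole argument is the coupling bound
$$
\mathbb{E}\Big[\big\|D^{N,l}_{\tau^l_s}-D^{N,l-1}_{\tau^{l-1}_s}\big\|_2^q\Big]^{1/q}\le\frac{\mathsf C\,\Delta_l^{1/2}}{\sqrt N},\qquad s\in[0,T].
$$
I would prove it by writing $D^{N,l}_{\tau^l_s}-D^{N,l-1}_{\tau^{l-1}_s}=\tfrac1N\sum_{i=1}^N\delta^i_s$ with $\delta^i_s=(\zeta^{i,l}_{\tau^l_s}-m^l_{\tau^l_s})-(\zeta^{i,l-1}_{\tau^{l-1}_s}-m^{l-1}_{\tau^{l-1}_s})$, noting that the $\delta^i_s$ are $\mathscr F$-conditionally i.i.d.\ and centred, and applying the Marcinkiewicz--Zygmund inequality conditionally (after raising $q$ to $2$ if $q<2$) to reduce to $\mathbb{E}[\|\delta^1_s\|_2^q]^{1/q}\le\mathsf C\Delta_l^{1/2}$. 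For the latter I would split $\delta^1_s$ into a single fine Euler increment of $\zeta^{1,l}$ between $\tau^{l-1}_s$ and $\tau^l_s$, of $\mathbb{L}_q$-size $O(\Delta_l^{1/2})$ since in \eqref{eq:appb_iid} the $\overline W,\overline V$ terms are $O(\Delta_l^{1/2})$ and the drift is $O(\Delta_l)$, plus $(\zeta^{1,l}_u-\zeta^{1,l-1}_u)-(m^l_u-m^{l-1}_u)$ at the common coarse time $u=\tau^{l-1}_s$; the last difference is $O(\Delta_l)$ in $\mathbb{L}_q$ by the $\mathbb{L}_q$ strong-error estimate of \cite{CJY20} (available there, or reproved by the same Gr\"{o}nwall argument using Lemma~\ref{lem:xi_lq} and \eqref{eq:bound_deter_cov}), while the deterministic-mean pieces $\|m^l_{\tau^l_s}-m^l_{\tau^{l-1}_s}\|_2$, $\|m^l_u-m^{l-1}_u\|_2$ are $O(\Delta_l^{1/2})$ by Lemmas~\ref{lem:weak_error1} and \ref{lem:strong_error1} together with a one-step estimate for \eqref{eq:deter_mean_evol}.

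With this in hand the two non-martingale differences are routine. Writing $R^l(1)=\int_0^T\langle CD^{N,l}_{\tau^l_s},R^{-1}CX_{\tau^l_s}\rangle\,ds$ and inserting $\langle CD^{N,l-1}_{\tau^{l-1}_s},R^{-1}CX_{\tau^l_s}\rangle$, Minkowski's integral inequality and Cauchy--Schwarz bound $R^l(1)-R^{l-1}(1)$ by a constant times $\int_0^T\|D^{N,l}_{\tau^l_s}-D^{N,l-1}_{\tau^{l-1}_s}\|_{2q}\,\|X_{\tau^l_s}\|_{2q}\,ds+\int_0^T\|D^{N,l-1}_{\tau^{l-1}_s}\|_{2q}\,\|X_{\tau^l_s}-X_{\tau^{l-1}_s}\|_{2q}\,ds$; using the coupling bound, the standard i.i.d.\ estimate $\|D^{N,l-1}_{\tau^{l-1}_s}\|_{2q}\le\mathsf C/\sqrt N$, the moment bound $\|X_s\|_{2q}\le\mathsf C$ (e.g.\ \cite{DT18}) and the increment bound $\|X_{\tau^l_s}-X_{\tau^{l-1}_s}\|_{2q}\le\mathsf C\Delta_l^{1/2}$, both integrals are $\le\mathsf C\Delta_l^{1/2}/\sqrt N$. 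For $R^l-R^{l-1}$ I would first use $\langle m^{N,l}_s,Sm^{N,l}_s\rangle-\langle m^l_s,Sm^l_s\rangle=\langle D^{N,l}_s,S(m^{N,l}_s+m^l_s)\rangle$ and split into the part linear in $D^{N,l}$ (treated exactly as $R^l(1)-R^{l-1}(1)$, using $\|m^l_s\|_{2q}\le\mathsf C$ from Lemma~\ref{lem:mean}) and the quadratic part $\tfrac12\langle D^{N,l}_s,SD^{N,l}_s\rangle$, whose cross-level difference is $O(\Delta_l^{1/2}/N)$ by Cauchy--Schwarz and the coupling bound.

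The main obstacle is the martingale difference $M^l(1)-M^{l-1}(1)$, because a term-by-term bound of $M^l(1)$ alone only gives $O(1/\sqrt N)$ — the extra $\Delta_l^{1/2}$ must come from the cancellation. I would decompose the observation increment as $R^{1/2}[V_{(k+1)\Delta_l}-V_{k\Delta_l}]+\int_{k\Delta_l}^{(k+1)\Delta_l}C(X_r-X_{k\Delta_l})\,dr$, so that $M^l(1)=\widetilde M^l(1)+E^l$ with $\widetilde M^l(1)$ a genuine discrete-time martingale (w.r.t.\ the filtration $\mathscr G_{k\Delta_l}$ generated by all processes up to time $k\Delta_l$, for which $D^{N,l}_{k\Delta_l}$ is measurable and $[V_{(k+1)\Delta_l}-V_{k\Delta_l}]$ has conditional mean zero) and $E^l=\sum_k\langle CD^{N,l}_{k\Delta_l},R^{-1}\int_{k\Delta_l}^{(k+1)\Delta_l}C(X_r-X_{k\Delta_l})\,dr\rangle$. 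Using $\Delta_{l-1}=2\Delta_l$ to express $\widetilde M^{l-1}(1)$ over the fine grid, $\widetilde M^l(1)-\widetilde M^{l-1}(1)=\sum_k\langle C(D^{N,l}_{k\Delta_l}-D^{N,l-1}_{\tau^{l-1}_{k\Delta_l}}),R^{-1/2}[V_{(k+1)\Delta_l}-V_{k\Delta_l}]\rangle$ is a fine-grid martingale whose predictable bracket is at most a constant times $\int_0^T\|D^{N,l}_{\tau^l_s}-D^{N,l-1}_{\tau^{l-1}_s}\|_2^2\,ds$; the Burkholder--Davis--Gundy inequality followed by Minkowski's integral inequality and the coupling bound then give $\mathbb{E}[|\widetilde M^l(1)-\widetilde M^{l-1}(1)|^q]^{1/q}\le\mathsf C(T\Delta_l/N)^{1/2}$. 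The correction $E^l-E^{l-1}$ is $O(\Delta_l^{1/2}/\sqrt N)$ already by the triangle inequality, since each of $E^l$, $E^{l-1}$ is a sum of $O(\Delta_l^{-1})$ terms of $\mathbb{L}_q$-size $O(\Delta_l^{3/2}/\sqrt N)$ (from $\|D^{N,\cdot}_\cdot\|_{2q}\le\mathsf C/\sqrt N$ and $\|\int_{k\Delta_l}^{(k+1)\Delta_l}(X_r-X_{k\Delta_l})\,dr\|_{2q}\le\mathsf C\Delta_l^{3/2}$). Adding the three bounds and applying the $C_q$-inequality yields \eqref{eq:mul}.
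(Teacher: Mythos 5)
Your proposal is correct and follows essentially the same route as the paper: the same martingale--remainder decomposition of each level's error into $M^s(1)+R^s(1)+R^s$, the same key coupling estimate on the centred ensemble-mean differences $\overline{m}^{N,l}_{\tau^l_s}-\overline{m}^{N,l-1}_{\tau^{l-1}_s}$ obtained via Marcinkiewicz--Zygmund together with the one-fine-step increment of size $\mathcal{O}(\Delta_l^{1/2})$ and the strong/weak error lemmas, and the same use of Burkholder--Davis--Gundy for the martingale difference (the paper splits the argument into Lemmas \ref{lem:R1}--\ref{lem:R2} but the content is the same). Your write-up is in fact slightly more explicit than the paper's about exactly where the extra factor $\Delta_l^{1/2}$ originates in each term.
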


As before, in order to proceed we will make use of a martingale-remainder decomposition. 
\\
Recall that, for level $s \in \{l-1,l\}$, we have
\begin{align*}
\overline{U}_{t+k_1\Delta_{l-1}}^{N,s}(Y) - \overline{U}_{t+k_1\Delta_{l-1}}^{s}(Y) &= M_{t+k_1\Delta_{l-1}}^s(Y) + R_{t+k_1\Delta_{l-1}}^s \\
&= M^s_{t+k_1\Delta_{l-1}}(1) + R^s_{t+k_1\Delta_{l-1}}(1) + R^s_{t+k_1\Delta_{l-1}},
\end{align*}
Therefore substituting into the LHS of  \eqref{eq:mul}, and using Minkowski's inequality, results in
\begin{align*}
&\mathbb{E}\Big[\Big|[\widehat{\overline{U}}_{t+k_1\Delta_{l-1}}^{N,l}-\widehat{\overline{U}}_{t+k_1\Delta_{l-1}}^{N,l-1}] - [{\overline{U}}_{t+k_1\Delta_{l-1}}^{l}-{\overline{U}}_{t+k_1\Delta_{l-1}}^{l-1}]\Big|^q\Big]^{1/q}  \\ &=
\mathbb{E}\Big[\Big| (R_{t+k_1\Delta_{l-1}}^{l}(1) -R_{t+k_1\Delta_{l-1}}^{l-1}(1))  +  (M_{t+k_1\Delta_{l-1}}^{l}(1) -M_{t+k_1\Delta_{l-1}}^{l-1}(1))\\ &+(R_{t+k_1\Delta_{l-1}}^{l}(Y) -R_{t+k_1\Delta_{l-1}}^{l-1}(Y)) \Big|^q\Big]^{1/q} \\ &\leq 
\mathbb{E}\Big[\Big| (R_{t+k_1\Delta_{l-1}}^{l}(1) -R_{t+k_1\Delta_{l-1}}^{l-1}(1)) \Big|^q\Big]^{1/q}+ \mathbb{E}\Big[\Big| (M_{t+k_1\Delta_{l-1}}^{l}(1) -M_{t+k_1\Delta_{l-1}}^{l-1}(1)) \Big|^q\Big]^{1/q}  \\&+ \mathbb{E}\Big[\Big| (R_{t+k_1\Delta_{l-1}}^{l}(Y) -R_{t+k_1\Delta_{l-1}}^{l-1}(Y)) \Big|^q\Big]^{1/q}.
\end{align*}

Therefore in order to prove \autoref{prop:var_term2} we will split it into three lemmas, which are stated and proved below.

\begin{lem}
\label{lem:R1}
For any  $(t,q)\in \mathbb{N}_0 \times [1,\infty)$, there exists a $\mathsf{C}<+\infty$ such that for any $(l,N,k_1)\in\mathbb{N}\times\{2,3,\dots\}\times\{0,1,\dots,\Delta_{l-1}^{-1}-1\}$:
\begin{equation}
\label{eq:R1}
\mathbb{E}\Big[\Big| \Big(R_{t+k_1\Delta_{l-1}}^{l}(1) -R_{t+k_1\Delta_{l-1}}^{l-1}(1)\Big) \Big|^q\Big]^{1/q} \leq  \frac{\mathsf{C}\Delta^{1/2}_l}{\sqrt{N}}.
\end{equation}
\end{lem}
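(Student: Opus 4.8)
The plan is to follow the martingale--remainder bookkeeping of \autoref{lem:sl_nc}, but to exploit the level-coupling so as to improve the single-level rate $\mathcal{O}(N^{-1/2})$ by an extra factor $\Delta_l^{1/2}$. Write $\mathcal{E}_k^{s}:=m_{k\Delta_s}^{N,s}-m_{k\Delta_s}^{s}\in\mathbb{R}^{d_x}$ for the sampling error of the level-$s$ mean ($s\in\{l-1,l\}$), so that, with $K:=t\Delta_{l-1}^{-1}+k_1$,
\[
R_{t+k_1\Delta_{l-1}}^{s}(1)=\Delta_s\sum_{k=0}^{(t+k_1\Delta_{l-1})\Delta_s^{-1}-1}\Phi_k^{s},\qquad \Phi_k^{s}:=\big\langle C\mathcal{E}_k^{s},\,R^{-1}CX_{k\Delta_s}\big\rangle .
\]
Since $\Delta_{l-1}=2\Delta_l$, we have $2m\Delta_l=m\Delta_{l-1}$, the level-$l$ sum has $2K$ summands, and grouping them into the $K$ pairs $\{2m,2m+1\}$, $m=0,\dots,K-1$ (one per coarse step) and writing $\Delta_{l-1}=2\Delta_l$ gives the identity
\[
R_{t+k_1\Delta_{l-1}}^{l}(1)-R_{t+k_1\Delta_{l-1}}^{l-1}(1)=\Delta_l\sum_{m=0}^{K-1}\Big[\,2\big(\Phi_{2m}^{l}-\Phi_{m}^{l-1}\big)+\big(\Phi_{2m+1}^{l}-\Phi_{2m}^{l}\big)\Big].
\]
Because there are $K=\mathcal{O}(\Delta_l^{-1})$ summands while $\Delta_lK\le (T+1)/2$, it suffices, by Minkowski, to bound $\|\Phi_{2m}^{l}-\Phi_{m}^{l-1}\|_q$ and $\|\Phi_{2m+1}^{l}-\Phi_{2m}^{l}\|_q$ each by $\mathsf{C}\Delta_l^{1/2}N^{-1/2}$.

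The two inputs I would establish are coupled $\mathbb{L}_q$ bounds for the mean errors, both via a conditional Marcinkiewicz--Zygmund inequality (given $\mathscr{F}$ the $N$ coupled pairs $(\zeta^{i,l},\zeta^{i,l-1})$ are i.i.d., driven by i.i.d.\ $(\overline W^i,\overline V^i)$, common initial laws, and the \emph{deterministic} covariances $P^{l},P^{l-1}$): first, using $2m\Delta_l=m\Delta_{l-1}$,
\[
\Big\|\mathcal{E}_{2m}^{l}(j)-\mathcal{E}_{m}^{l-1}(j)\Big\|_q\;\le\;\frac{\mathsf{C}}{\sqrt{N}}\,\Big\|\zeta_{2m\Delta_l}^{1,l}(j)-\zeta_{m\Delta_{l-1}}^{1,l-1}(j)\Big\|_q\;\le\;\frac{\mathsf{C}\,\Delta_l^{1/2}}{\sqrt{N}},
\]
where the last step is the coupled strong-error estimate for the discretized Kalman--Bucy diffusion from \cite{CJY20} (together with \autoref{lem:strong_error1} this even gives $\Delta_l$, but $\Delta_l^{1/2}$ already suffices); and second, the one-step increment bound
\[
\Big\|\mathcal{E}_{2m+1}^{l}(j)-\mathcal{E}_{2m}^{l}(j)\Big\|_q\;\le\;\frac{\mathsf{C}}{\sqrt{N}}\,\Big\|\zeta_{(2m+1)\Delta_l}^{1,l}(j)-\zeta_{2m\Delta_l}^{1,l}(j)\Big\|_q\;\le\;\frac{\mathsf{C}\,\Delta_l^{1/2}}{\sqrt{N}},
\]
the last inequality following from the Euler recursion, \autoref{lem:y_inc} for the $Y$-increment, standard Brownian increment moments, the uniform bound $\|\zeta^{1,l}_{k\Delta_l}(j)\|_q\le\mathsf{C}$ (immediate from the conditional Gaussianity and the bounds on $m^l,P^l$), and \eqref{eq:bound_deter_cov}. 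I would also record $\|X_{k\Delta_l}(j)\|_q\le\mathsf{C}$ (as in \cite{DT18}), $\|X_{(2m+1)\Delta_l}(j)-X_{2m\Delta_l}(j)\|_q\le\mathsf{C}\Delta_l^{1/2}$ (a Brownian-type increment of \eqref{eq:signal}), and $\|\mathcal{E}_k^{s}(j)\|_q\le\mathsf{C}N^{-1/2}$ (\autoref{lem:mean} at level $s$).

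The assembly is then routine H\"older/Minkowski, componentwise as in \autoref{lem:sl_nc}. For the first term, $\Phi_{2m}^{l}-\Phi_m^{l-1}=\langle C(\mathcal{E}_{2m}^{l}-\mathcal{E}_m^{l-1}),R^{-1}CX_{2m\Delta_l}\rangle$, which is $\mathcal{O}(\Delta_l^{1/2}N^{-1/2})$ in $\mathbb{L}_q$ by the first coupled bound and $\|X\|_{2q}=\mathcal{O}(1)$. For the second term I would split
\[
\Phi_{2m+1}^{l}-\Phi_{2m}^{l}=\big\langle C(\mathcal{E}_{2m+1}^{l}-\mathcal{E}_{2m}^{l}),R^{-1}CX_{(2m+1)\Delta_l}\big\rangle+\big\langle C\mathcal{E}_{2m}^{l},R^{-1}C\big(X_{(2m+1)\Delta_l}-X_{2m\Delta_l}\big)\big\rangle ,
\]
bounding the first piece by the one-step mean-error increment (and $\|X\|_{2q}=\mathcal O(1)$) and the second by $\|\mathcal{E}_{2m}^{l}\|_{2q}\,\|X_{(2m+1)\Delta_l}-X_{2m\Delta_l}\|_{2q}$, both $\mathcal{O}(\Delta_l^{1/2}N^{-1/2})$. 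Multiplying by $\Delta_l$, summing over the $\mathcal{O}(\Delta_l^{-1})$ values of $m$ and using $\Delta_lK=\mathcal{O}(1)$ yields the claimed $\mathsf{C}\Delta_l^{1/2}N^{-1/2}$. The main obstacle is the first coupled estimate: one must justify the conditional Marcinkiewicz--Zygmund step --- i.e.\ that, conditionally on $\mathscr{F}$, the coupled pairs $(\zeta^{i,l},\zeta^{i,l-1})$ are genuinely i.i.d., which is exactly why the ideal system has to carry the \emph{deterministic} Riccati covariances --- and to invoke the coupled strong-error bound of \cite{CJY20} in $\mathbb{L}_q$ rather than just $\mathbb{L}_2$; note that estimating $\Phi^l_{2m}$ and $\Phi^{l-1}_m$ separately (each only $\mathcal{O}(N^{-1/2})$) would destroy the gain, so it is crucial to keep the difference structure, and that the residual $\Delta_l^{1/2}$ (rather than $\Delta_l$) is forced by the ``odd'' increments $\Phi^l_{2m+1}-\Phi^l_{2m}$, which have no counterpart at level $l-1$.
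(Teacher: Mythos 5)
Your proof is correct and follows essentially the same route as the paper: your three building blocks --- the cross-level difference of the centered mean errors at a common coarse time, the within-level one-step increment of the mean error, and the one-step increment of $X$ paired with the single-level mean error --- are exactly the paper's terms $T_4$, $T_3$ and $T_2$, merely organized as paired discrete sums rather than the paper's integrals over $\tau_s^l$ versus $\tau_s^{l-1}$. The remarks on the conditional Marcinkiewicz--Zygmund step and on needing the coupled strong error in $\mathbb{L}_q$ are apt and, if anything, slightly more careful than the paper's own treatment.
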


\begin{proof}
\begin{eqnarray*}
&&\mathbb{E}\Big[\Big| \Big(R_{t+k_1\Delta_{l-1}}^{l}(1) -R_{t+k_1\Delta_{l-1}}^{l-1}(1)\Big) \Big|^q\Big]^{1/q}\\ 
 &=& \mathbb{E}\Big[\Big|\sum^{d_y}_{j_1=1}\sum_{j_2=1}^{d_x}\sum_{j_3=1}^{d_y} \sum^{t\Delta^{-1}_{l-1}+k_1-1}_{k=0} C(j_1,j_2)  \Big(m^{N,l}_{k \Delta_{l-1}}(j_2) - m^l_{k \Delta_{l-1}}(j_2)\Big) R^{-1}(j_1,j_3)  CX_{k\Delta_{l-1}}(j_3)\Delta_l 
\\&-&  \sum^{d_y}_{j_1=1}\sum_{j_2=1}^{d_x}\sum_{j_3=1}^{d_y}\sum^{t\Delta^{-1}_{l-1}+k_1-1}_{k=0} C(j_1,j_2)  \Big(m^{N,l-1}_{k \Delta_{l-1}}(j_2) - m^{l-1}_{k \Delta_{l-1}}(j_2)\Big)R^{-1}(j_1,j_3) CX_{k\Delta_{l-1}}(j_3)\Delta_{l-1}   \Big|^q\Big]^{1/q}.
\end{eqnarray*}
Then through the generalized Minkowski inequality
\begin{eqnarray*}
\mathbb{E}\Big[\Big| \Big(R_t^{l}(1) -R_t^{l-1}(1)\Big) \Big|^q\Big]^{1/q}
 &\leq&  \sum_{j_1=1}^{d_y}\sum_{j_2=1}^{d_x}\sum_{j_3=1}^{d_y}\sum_{j_4=1}^{d_x} C(j_1,j_2)C(j_3,j_4)R^{-1}(j_1,j_3) \times \\
 && \mathbb{E}\Big[\Big|   \int^{t+k_1\Delta_{l-1}}_0 \Big( \overline{m}^{N,l}_{\tau^l_s}(j_2) X_{\tau_s^l}(j_4) - \overline{m}^{N,l-1}_{\tau^{l-1}_s}(j_2) X_{\tau_s^{l-1}}(j_4) \Big) ds\Big|^q\Big]^{1/q} \\
 \\
&\leq &   \mathsf{C}\sum_{j_2=1}^{d_x}\sum_{j_4=1}^{d_x}  \Big( {\mathbb{E}\Big[\Big| \int^{t+k_1\Delta_{l-1}}_0\Big(\overline{m}^{N,l}_{\tau^l_s}(j_2)  - \overline{m}^{N,l-1}_{\tau^{l-1}_s}(j_2) \Big)X_{\tau^{l}_s}(j_4) ds   \Big|^q\Big]^{1/q}} \\ &+&  {\mathbb{E}\Big[\Big| \int^{t+k_1\Delta_{l-1}}_0  \overline{m}^{N,l-1}_{\tau^{l-1}_s}(j_2) \Big(X_{\tau^l_s}(j_4)-X_{\tau^{l-1}_s}(j_4)\Big)
ds   \Big|^q\Big]^{1/q}}\Big) \\
&=:& T_1 + T_2,
\end{eqnarray*}
where we have used $\tau^s_t = \lfloor \frac{t}{\Delta_s} \rfloor \Delta_s $ for $t \in \mathbb{R}^+$, and
\begin{equation}
\label{eq:md}
\overline{m}^{N,s}_{k\Delta_l} = {m}^{N,s}_{k\Delta_l} - m^l_{k\Delta_s}, \quad s \in \{l-1,l\}.
\end{equation}
For $T_1$ we can express it as
\begin{eqnarray*}
T_1&=& \mathsf{C} \sum_{j_2=1}^{d_x}\sum_{j_4=1}^{d_x} \mathbb{E}\Big[\Big| \int^{t+k_1\Delta_{l-1}}_0 \Big(\overline{m}^{N,l}_{\tau^l_s}(j_2) - \overline{m}^{N,l}_{\tau^{l-1}_s}(j_2) + \overline{m}^{N,l}_{\tau^{l-1}_s}(j_2) - \overline{m}^{N,l-1}_{\tau^{l-1}_s}(j_2)\Big) X_{\tau^l_s}(j_4)ds \Big|^q\Big]^{1/q} \\ 
&\leq&  \mathsf{C} \sum_{j_2=1}^{d_x}\sum_{j_4=1}^{d_x}\int^{t+k_1\Delta_{l-1}}_0 \Big( \mathbb{E}\Big[\Big| \Big(\overline{m}^{N,l}_{\tau^l_s}(j_2) - \overline{m}^{N,l}_{\tau^{l-1}_s}(j_2)\Big)X_{\tau^l_s}(j_4) \Big|^q\Big]^{1/q} 
\\&+&   \mathbb{E}\Big[\Big| \Big( \overline{m}^{N,l}_{\tau^{l-1}_s}(j_2) - \overline{m}^{N,l-1}_{\tau^{l-1}_s}(j_2)\Big) X_{\tau^l_s}(j_4) \Big|^q\Big]^{1/q}\Big)ds,
\\&=:&T_3+T_4.
\end{eqnarray*}
again by using the generalized Minkowski inequality and Jensen's inequality.
\\\\
For $T_3$ we can apply the Marcinkiewicz--Zygmund and H\"{o}lder inequalities, and using the fact that means can be expressed as the expectations of \eqref{eq:iid1} - \eqref{eq:iid2}
\begin{eqnarray*}
T_3 &=&   \mathsf{C} \sum_{j_2=1}^{d_x}\sum_{j_4=1}^{d_x}\int^{t+k_1\Delta_{l-1}}_0 \mathbb{E} \Big[ \Big| \frac{1}{N}
 \sum^N_{i=1}\Big[\Big({\zeta}^{i,l}_{\tau^l_s}(j_2) - {\zeta}^{i,l}_{\tau^{l-1}_s}(j_2)\Big) - \Big(m^l_{\tau^{l}_s}(j_2)-m^l_{\tau^{l-1}_s}(j_2)\Big)\Big] X_{\tau^l_s}(j_4) \Big|^q\Big]^{1/q} ds \\
 &\leq&   \mathsf{C} \sum_{j_2=1}^{d_x}\sum_{j_4=1}^{d_x}\int^{t+k_1\Delta_{l-1}}_0 \mathbb{E} \Big[ \Big| \frac{1}{N}
 \sum^N_{i=1}\Big[\Big({\zeta}^{i,l}_{\tau^l_s}(j_2) - {\zeta}^{i,l}_{\tau^{l-1}_s}(j_2)\Big) - \Big(m^l_{\tau^{l}_s}(j_2)-m^l_{\tau^{l-1}_s}(j_2)\Big)\Big]  \Big|^{2q}\Big]^{1/2q} \times \\ &&\mathbb{E} \Big[ \Big|X_{\tau^l_s}(j_4) \Big|^{2q}\Big]^{1/2q} ds \\
   &\leq& \frac{\mathsf{C}_q}{\sqrt{N}} \sum_{j_2=1}^{d_x}\sum_{j_4=1}^{d_x} \int^{t+k_1\Delta_{l-1}}_0  \mathbb{E} \Big[ \Big| [{\zeta}^l_{\tau^l_s}(j_2) - {\zeta}^l_{\tau^{l-1}_s}(j_2)]\Big|^{2q}\Big]^{1/2q} \mathbb{E} \Big[ \Big|X_{\tau^l_s}(j_4) \Big|^{2q}\Big]^{1/2q} ds.
\end{eqnarray*}
The process $\mathbb{E}[|X_{\tau^l_s}]^{2q}]$ is of order $\mathcal{O}(1)$ and and the recursion is of order $\mathcal{O}({\Delta_l})$, through the strong error \autoref{lem:strong_error1}.
 For $T_4$ we know it is sufficiently small, which is of order $\mathcal{O}(\Delta_l)$. 
\\\\
For $T_2$, we use the definition of the discretized diffusion process
\begin{equation}
\label{eq:rem2}
X_{\tau^l_s} - X_{\tau^{l-1}_s} = \int^{\tau^l_s}_{\tau^{l-1}_s} AX_{u} du  + Q^{1/2}\Big(W_{\tau^{l}_s} - W_{\tau^{l-1}_s}\Big).
\end{equation}

{As before, we know the difference of the Brownian motion increment $\mathbb{E}\Big[\Big|W_{\tau^{l}_s} - W_{\tau^{l-1}_s}\Big|^q \Big]$ is of order $\mathcal{O}(\Delta^{1/2}_l)$,
and, as before, $\mathbb{E}[|X_u|^q]^{1/q} \leq \mathsf{C}$}. Therefore the integral term of \eqref{eq:rem2} is of order $\mathcal{O}(\Delta^{1/2}_l)$. Finally for $ \mathbb{E}[|\overline{m}^{N,l-1}_{\tau^{l-1}_s}|^q]^{1/q}$, as it is of order
$\mathcal{O}(N^{-\frac{1}{2}})$, therefore, combining all terms, we can deduce from that 
$$
\mathbb{E}\Big[\Big| \Big(R_{t+k_1\Delta_{l-1}}^{l}(1) -R_{t+k_1\Delta_{l-1}}^{l-1}(1)\Big) \Big|^q\Big]^{1/q} \leq \frac{\mathsf{C}\Delta^{1/2}_l}{\sqrt{N}}.
$$
\end{proof}

\begin{lem}
\label{lem:M1}
For any $(t,q)\in \mathbb{N}_0 \times [1,\infty)$, there exists a $\mathsf{C}<+\infty$ such that for any $(l,N,k_1)\in\mathbb{N}\times\{2,3,\dots\}\times\{0,1,\dots,\Delta_{l-1}^{-1}-1\}$:
\begin{equation}
\label{eq:M1}
\mathbb{E}\Big[\Big| \Big(M_{t+k_1\Delta_{l-1}}^{l}(1) -M_{t+k_1\Delta_{l-1}}^{l-1}(1)\Big) \Big|^q\Big]^{1/q} \leq  \frac{\mathsf{C}\Delta_l^{1/2}}{\sqrt{N}}.
\end{equation}
\end{lem}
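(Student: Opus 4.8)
The plan is to mirror the proof of \autoref{lem:R1}, but to replace the elementary Minkowski bound on the sum over time steps by a martingale argument, so that the sum over the $\mathcal{O}(\Delta_l^{-1})$ steps does not deteriorate with $l$. First I would recall that, for $s\in\{l-1,l\}$, the term $M^s_{t+k_1\Delta_{l-1}}(1)$ is, up to a lower-order ordinary-integral remainder, a discrete-time stochastic integral of the piecewise-constant integrand $u\mapsto \overline{m}^{N,s}_{\tau^s_u}$ against the observation noise, where $\overline{m}^{N,s}_{k\Delta_s}=m^{N,s}_{k\Delta_s}-m^s_{k\Delta_s}$ as in \eqref{eq:md} and $\tau^s_u=\lfloor u/\Delta_s\rfloor\Delta_s$. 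The remainder comes from replacing $[Y_{(k+1)\Delta_s}-Y_{k\Delta_s}]-CX_{k\Delta_s}\Delta_s$ by the exact noise increment; it is handled exactly as in \autoref{lem:R1} (generalized Minkowski together with \autoref{lem:mean} and the moment bounds on $X$ from \cite{DT18}), giving a term of order $\mathcal{O}(\Delta_s^{1/2}/\sqrt{N})$, so its contribution to the left-hand side of \eqref{eq:M1} is $\leq \mathsf{C}\Delta_l^{1/2}/\sqrt{N}$.

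Since $M^l_{t+k_1\Delta_{l-1}}(1)$ and $M^{l-1}_{t+k_1\Delta_{l-1}}(1)$ are stochastic integrals against the \emph{same} observation-noise process, their difference is a single continuous $\mathbb{L}_q$-martingale with integrand proportional (up to the fixed matrices $C$, $R^{-1}$, $R^{1/2}$) to $\overline{m}^{N,l}_{\tau^l_u}-\overline{m}^{N,l-1}_{\tau^{l-1}_u}$. Applying the Burkholder--Davis--Gundy inequality in its $\mathbb{L}_q$ form, followed by the generalized Minkowski inequality, reduces the estimate to bounding
\[
\left(\int_0^{t+k_1\Delta_{l-1}}\max_{j}\,\mathbb{E}\Big[\Big|\overline{m}^{N,l}_{\tau^l_u}(j)-\overline{m}^{N,l-1}_{\tau^{l-1}_u}(j)\Big|^q\Big]^{2/q}\,du\right)^{1/2}.
\]

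The core step is then to show $\mathbb{E}[|\overline{m}^{N,l}_{\tau^l_u}(j)-\overline{m}^{N,l-1}_{\tau^{l-1}_u}(j)|^q]^{1/q}\leq \mathsf{C}\Delta_l^{1/2}/\sqrt{N}$ uniformly in $u$; this is precisely the content of the term $T_1=T_3+T_4$ of \autoref{lem:R1}. I would split $\overline{m}^{N,l}_{\tau^l_u}-\overline{m}^{N,l-1}_{\tau^{l-1}_u}=(\overline{m}^{N,l}_{\tau^l_u}-\overline{m}^{N,l}_{\tau^{l-1}_u})+(\overline{m}^{N,l}_{\tau^{l-1}_u}-\overline{m}^{N,l-1}_{\tau^{l-1}_u})$. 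The first difference is a centred empirical average of i.i.d.\ (conditionally on the observations) increments $[\zeta^{i,l}_{\tau^l_u}-\zeta^{i,l}_{\tau^{l-1}_u}]-[m^l_{\tau^l_u}-m^l_{\tau^{l-1}_u}]$, so the Marcinkiewicz--Zygmund inequality gives a factor $N^{-1/2}$ times $\|\zeta^{l}_{\tau^l_u}-\zeta^{l}_{\tau^{l-1}_u}\|_q$; since $\tau^l_u-\tau^{l-1}_u\in\{0,\Delta_l\}$, this is at most one Euler step of the level-$l$ scheme and hence $\mathcal{O}(\Delta_l^{1/2})$ in $\mathbb{L}_q$ by \autoref{lem:xi_lq}, \autoref{lem:y_inc} and the bound \eqref{eq:bound_deter_cov} on the covariance. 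The second difference is again a centred i.i.d.\ average, of $[\zeta^{i,l}_{\tau^{l-1}_u}-\zeta^{i,l-1}_{\tau^{l-1}_u}]-[m^l_{\tau^{l-1}_u}-m^{l-1}_{\tau^{l-1}_u}]$, so Marcinkiewicz--Zygmund yields $N^{-1/2}\|\zeta^{l}_{\tau^{l-1}_u}-\zeta^{l-1}_{\tau^{l-1}_u}\|_q$, which is $\mathcal{O}(\Delta_l^{1/2})$ by the strong-error estimate for the coupled pair from \cite{CJY20} (the $\mathbb{L}_q$-extension of \autoref{lem:strong_error1}). Substituting this into the displayed integral and using $t+k_1\Delta_{l-1}\leq T$ gives $\mathsf{C}\Delta_l^{1/2}/\sqrt{N}$, and adding the remainder contribution completes the proof.

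I expect the main obstacle to be the second difference above: extracting the extra factor $\Delta_l^{1/2}$ requires the genuine coupling gain $\|\zeta^{l}_{\tau^{l-1}_u}-\zeta^{l-1}_{\tau^{l-1}_u}\|_q=\mathcal{O}(\Delta_l^{1/2})$, which relies on the coupled strong-error analysis of \cite{CJY20} and, critically, on the uniform-in-$l$ moment control of the covariances afforded by \eqref{eq:bound_deter_cov}; without this one only recovers the $\mathcal{O}(N^{-1/2})$ bound of \autoref{lem:sl_nc}, which is not summable over levels. A secondary, purely bookkeeping, point is to confirm that replacing $[Y_{(k+1)\Delta_s}-Y_{k\Delta_s}]-CX_{k\Delta_s}\Delta_s$ by the exact noise increment costs only $\mathcal{O}(\Delta_s^{1/2}/\sqrt{N})$, so that the martingale part indeed carries the whole estimate.
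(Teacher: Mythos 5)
Your proposal is correct and follows essentially the same route as the paper: substitute $dY_s=CX_s\,ds+R^{1/2}dV_s$ to split the difference into a drift-mismatch remainder (handled exactly as the corresponding term in \autoref{lem:R1}) and a stochastic integral against $dV$ with integrand $\overline{m}^{N,l}_{\tau^l_s}-\overline{m}^{N,l-1}_{\tau^{l-1}_s}$, to which Burkholder--Davis--Gundy is applied together with the $\mathbb{L}_q$ bound on that integrand already established in the proof of \autoref{lem:R1}. The only cosmetic difference is that you bound each level's compensator remainder separately rather than exploiting the cancellation that leaves the single term $\overline{m}^{N,l-1}_{\tau^{l-1}_s}\big(X_{\tau^l_s}-X_{\tau^{l-1}_s}\big)$, and your $\mathcal{O}(\Delta_l^{1/2})$ estimate for the coupled single-particle difference is conservative (the strong error actually gives $\mathcal{O}(\Delta_l)$), neither of which affects the conclusion.
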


\begin{proof}
As before, we set  $\tau^s_t = \lfloor \frac{t}{\Delta_s} \rfloor \Delta_s $ for $t \in \mathbb{R}^+$, and  make use of \eqref{eq:md}
\begin{eqnarray*}
&&\mathbb{E}\Big[\Big| \Big(M_{t+k_1\Delta_{l-1}}^{l}(1) -M_{t+k_1\Delta_{l-1}}^{l-1}(1)\Big) \Big|^q\Big]^{1/q} \\
 &=&\mathbb{E}\Big[\Big|\sum^{d_y}_{j_1=1}\sum_{j_2=1}^{d_x}\sum_{j_3=1}^{d_y}\Big( \sum^{t\Delta^{-1}_{l-1}+k_1-1}_{k=0} C(j_1,j_2)\Big(\overline{m}^{N,l}_{k \Delta_{l-1}}(j_2)\Big)
 R^{-1}(j_1,j_3)\times \\&&\Big([Y_{(k+1)\Delta_{l-1}}-Y_{k\Delta_{l-1}}](j_3)-CX_{k\Delta_{l-1}}(j_3)\Delta_l\Big) \\
& -&   \sum^{t\Delta^{-1}_{l-1}+k_1-1}_{k=0} C(j_1,j_2) \Big(\overline{m}^{N,l-1}_{k \Delta_{l-1}}(j_2)\Big)R^{-1}(j_1,j_3)\Big([Y_{(k+1)\Delta_{l-1}}-Y_{k\Delta_{l-1}}](j_3)-CX_{k\Delta_{l-1}}(j_3)\Delta_{l-1}\Big) \Big) \Big|^q\Big]^{1/q} \\ 
\end{eqnarray*}
Then by using generalized Minkowski and Jensen's inequality 
\begin{eqnarray*}
&&\mathbb{E}\Big[\Big| \Big(M_{t+k_1\Delta_{l-1}}^{l}(1) -M_{t+k_1\Delta_{l-1}}^{l-1}(1)\Big) \Big|^q\Big]^{1/q} \\
&=& \mathbb{E}\Big[\Big|\sum_{j_1=1}^{d_y}\sum_{j_2=1}^{d_x}\sum^{d_y}_{j_3=1}C(j_1,j_2)R^{-1}(j_1,j_3)\int^{t+k_1\Delta_{l-1}}_0 \Big( \overline{m}^{N,l}_{\tau^l_s}(j_2)- \overline{m}^{N,l-1}_{\tau^{l-1}_s}(j_2)  \Big) dY_s(j_3) \\&-&  \Big( \overline{m}^{N,l}_{\tau^l_s}(j_2) CX_{\tau_s^l}(j_3) - \overline{m}^{N,l-1}_{\tau^{l-1}_s}(j_2) CX_{\tau_s^{l-1}}(j_3) \Big) ds\Big|^q\Big]^{1/q} \\
&\leq& \mathsf{C} \sum_{j_2=1}^{d_x}\sum_{j_3=1}^{d_y}\sum_{j_4=1}^{d_x}   \mathbb{E}\Big[\Big|   \int^{t+k_1\Delta_{l-1}}_0 \Big( \overline{m}^{N,l}_{\tau^l_s}(j_2)- \overline{m}^{N,l-1}_{\tau^{l-1}_s}(j_2)  \Big) X_{\tau^{l}_s}(j_4)ds + \Big( \overline{m}^{N,l}_{\tau^l_s}(j_2)- \overline{m}^{N,l-1}_{\tau^{l-1}_s}(j_2)  \Big)dV_s(j_3)
\\ & -&  \Big( \overline{m}^{N,l}_{\tau^l_s}(j_2) X_{\tau_s^l}(j_4) - \overline{m}^{N,l-1}_{\tau^{l-1}_s}(j_2) X_{\tau_s^{l-1}}(j_4) \Big) ds\Big|^q\Big]^{1/q} \\
& \leq & \mathsf{C} \sum_{j_2=1}^{d_x}\sum_{j_3=1}^{d_y}\sum_{j_4=1}^{d_x}\int^{t+k_1\Delta_{l-1}}_0 \Big(\mathbb{E}\Big[\Big| -  \overline{m}^{N,l-1}_{\tau^{l-1}_s}(j_2)\Big( X_{\tau_s^l}(j_4) -  X_{\tau_s^{l-1}}(j_4) \Big) ds\Big|^q\Big]^{1/q} \\&+& \mathbb{E}\Big[\Big| \Big( \overline{m}^{N,l}_{\tau^l_s}(j_2)- \overline{m}^{N,l-1}_{\tau^{l-1}_s}(j_2)  \Big) dV_s(j_3)\Big|^q\Big]^{1/q}\Big)  \\
&=:& T_2+T_{3}.
\end{eqnarray*}

For $T_2$, we know it follows the same analysis as \autoref{lem:R1}. Therefore we can conclude that
$$
T_2 \leq \frac{\mathsf{C} \Delta^{1/2}_l}{\sqrt{N}}.
$$

For $T_{3}$ by using the Burkholder--Davis--Gundy and H\"{o}lder inequality, as done previously from \eqref{eq:bound1}, and we use the bound from \autoref{lem:R1}, for the
$\Big(\overline{m}^{N,l}_{\tau^l_s}- \overline{m}^{N,l-1}_{\tau^{l-1}_s}\Big)$ term. Therefore this implies that
$$
\mathbb{E}\Big[\Big| \Big(M_{t+k_1\Delta_{l-1}}^{l}(1) -M_{t+k_1\Delta_{l-1}}^{l-1}(1)\Big) \Big|^q\Big]^{1/q} \leq \frac{\mathsf{C} \Delta_l^{1/2}}{\sqrt{N}}.
$$
\end{proof}

\begin{lem}
\label{lem:R2}

For any $(t,q)\in \mathbb{N}_0 \times [1,\infty)$, there exists a $\mathsf{C}<+\infty$ such that for any $(l,N,k_1)\in\mathbb{N}\times\{2,3,\dots\}\times\{0,1,\dots,\Delta_{l-1}^{-1}-1\}$:
\begin{equation}
\label{eq:R2}
\mathbb{E}\Big[\Big| \Big(R_{t+k_1\Delta_{l-1}}^{l} -R_{t+k_1\Delta_{l-1}}^{l-1}\Big) \Big|^q\Big]^{1/q} \leq  \frac{\mathsf{C}\Delta_l}{\sqrt{N}}.
\end{equation}
\end{lem}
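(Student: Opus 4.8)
The plan is to follow the martingale--remainder template of Lemmas~\ref{lem:R1} and \ref{lem:M1}, while exploiting that $R^s_{t+k_1\Delta_{l-1}}$, unlike $R^s(1)$ and $M^s(1)$, carries no factor of the true signal $X_s$ --- only the conditional mean $m^s$ and its empirical estimate $m^{N,s}$, which are comparatively smooth. First I would recall, from the proof of Proposition~\ref{lem:sl_nc}, the rewriting
$$
R^s_{t+k_1\Delta_{l-1}} = -\frac{\Delta_s}{2}\sum_{k}\sum_{j_1=1}^{d_x}\sum_{j_2=1}^{d_x} S(j_1,j_2)\Big[\overline{m}^{N,s}_{k\Delta_s}(j_1)\,m^{N,s}_{k\Delta_s}(j_2) + m^s_{k\Delta_s}(j_1)\,\overline{m}^{N,s}_{k\Delta_s}(j_2)\Big],\qquad s\in\{l-1,l\},
$$
with $\overline{m}^{N,s}:=m^{N,s}-m^s$ as in \eqref{eq:md}. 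Writing $a^s_k$ for the inner double sum and using $\Delta_{l-1}=2\Delta_l$, the difference $R^l_{t+k_1\Delta_{l-1}}-R^{l-1}_{t+k_1\Delta_{l-1}}$ regroups as $-\tfrac{\Delta_l}{2}$ times a sum of $\mathcal{O}(\Delta_l^{-1})$ increments, each of the form $a^l_{2m}-a^{l-1}_m$ (at a shared coarse grid point) or $a^l_{2m+1}-a^l_{2m}$ (the one-step change of $a^l$ over an intermediate fine interval). Because of the explicit $\Delta_s$ prefactor, it is enough to show that the $\mathbb{L}_q$-norm of the sum of these increments is $\mathcal{O}(1/\sqrt{N})$.

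For the shared-grid-point increments $a^l_{2m}-a^{l-1}_m$ I would telescope each bilinear term into $\overline{m}^{N,l}\times S(m^{N,l}-m^{N,l-1})$ and $(\overline{m}^{N,l}-\overline{m}^{N,l-1})\times Sm^{N,l-1}$ (plus the symmetric terms built from $m^s$), all evaluated at $m\Delta_{l-1}$. By H\"{o}lder's inequality, Lemma~\ref{lem:mean} (so that $\|\overline{m}^{N,s}\|_{\mathbb{L}_q}\le\mathsf{C}/\sqrt{N}$) and the boundedness of $m^{N,s}$, $m^s$ and $\mathcal{M}$ in every $\mathbb{L}_q$ (Lemmas~\ref{lem:mean}--\ref{lem:xi_lq}), it then suffices to show $\|(m^l-m^{l-1})_{m\Delta_{l-1}}\|_{\mathbb{L}_q}=\mathcal{O}(\Delta_l)$ and $\|(\overline{m}^{N,l}-\overline{m}^{N,l-1})_{m\Delta_{l-1}}\|_{\mathbb{L}_q}=\mathcal{O}(\Delta_l/\sqrt{N})$ at a common grid point. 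The first follows from the triangle inequality through $\mathcal{M}_{m\Delta_{l-1}}$, Jensen (conditional expectation is an $\mathbb{L}_q$-contraction) and the $\mathbb{L}_q$ form of the strong error Lemma~\ref{lem:strong_error1}; the second follows from a Marcinkiewicz--Zygmund inequality over the $N$ i.i.d.\ particles together with the same grid-point strong error, exactly as in the $T_3$ estimate of Lemma~\ref{lem:R1}. Summing the $\mathcal{O}(\Delta_l^{-1})$ such increments by Minkowski yields $\mathcal{O}(1/\sqrt{N})$, hence $\mathsf{C}\Delta_l/\sqrt{N}$ after the prefactor.

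The intermediate-point increments $a^l_{2m+1}-a^l_{2m}$ are the delicate ones: each has $\mathbb{L}_q$-size only $\mathcal{O}(\Delta_l^{1/2}/\sqrt{N})$ --- the one-step change of $a^l$ carries a Brownian-increment factor of order $\Delta_l^{1/2}$ from the $dY$-term of the Kalman--Bucy recursion, scaled by the $\mathcal{O}(N^{-1/2})$ Monte-Carlo error --- so a crude Minkowski bound over $\mathcal{O}(\Delta_l^{-1})$ terms would give only $\mathcal{O}(\Delta_l^{-1/2}/\sqrt{N})$, i.e.\ $\mathcal{O}(\Delta_l^{1/2}/\sqrt{N})$ after the prefactor. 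To recover the missing half power I would split $a^l_{2m+1}-a^l_{2m}$, relative to the discrete-time filtration generated by the particle and observation noises up to $2m\Delta_l$, into a conditionally centred part of $\mathbb{L}_q$-size $\mathcal{O}(\Delta_l^{1/2}/\sqrt{N})$ and a predictable remainder of $\mathbb{L}_q$-size $\mathcal{O}(\Delta_l/\sqrt{N})$ (the $\mathcal{O}(\Delta_l)$ drift of the recursion together with the quadratic cross-term of the fresh noise). The sum of the centred parts is then controlled by the Burkholder--Davis--Gundy/Burkholder inequality in the time variable, $\big\|\sum_m X_m\big\|_{\mathbb{L}_q}\le\mathsf{C}_q\big(\sum_m\|X_m\|_{\mathbb{L}_q}^2\big)^{1/2}=\mathcal{O}\big(\Delta_l^{-1/2}\cdot\Delta_l^{1/2}/\sqrt{N}\big)=\mathcal{O}(1/\sqrt{N})$ (reducing to the case $q=2$ when $q\in[1,2)$), while the predictable parts sum directly to $\mathcal{O}(1/\sqrt{N})$; both contribute $\mathcal{O}(\Delta_l/\sqrt{N})$ after the prefactor. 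Assembling the pieces with the $C_q$- and Minkowski inequalities gives \eqref{eq:R2}.

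I expect the main obstacle to be precisely this last step: setting up the correct discrete-time filtration and cleanly isolating the martingale part of $a^l_{2m+1}-a^l_{2m}$ when $a^s_k$ is built from the empirical mean $m^{N,s}$ (and, for some variants of the ideal coupled system, the empirical covariance), so that the Burkholder--Davis--Gundy estimate applies and the predictable remainder is genuinely $\mathcal{O}(\Delta_l/\sqrt{N})$. The auxiliary need for the strong error in $\mathbb{L}_q$ rather than $\mathbb{L}_2$ is harmless, since the processes involved are (conditionally) Gaussian and hence have equivalent moments, or one may invoke the corresponding $\mathbb{L}_q$-estimates of \cite{CJY20} directly.
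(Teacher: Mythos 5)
Your proposal is correct and, in its overall architecture, matches the paper's proof: both split $R^l_{t+k_1\Delta_{l-1}}-R^{l-1}_{t+k_1\Delta_{l-1}}$ into (i) a level-$l$ quantity compared at the fine versus the coarse grid time ($\tau^l_s$ versus $\tau^{l-1}_s$; the paper's $T_1$, your intermediate increments $a^l_{2m+1}-a^l_{2m}$) and (ii) a level-$l$ versus level-$(l-1)$ comparison at the shared coarse grid time (the paper's $T_2$, your $a^l_{2m}-a^{l-1}_m$), and both handle (ii) in the same way: bilinear telescoping through $\overline{m}^{N,s}=m^{N,s}-m^s$ as in \eqref{eq:md}, the Marcinkiewicz--Zygmund inequality over the $N$ conditionally i.i.d.\ particles combined with the strong error (Lemma \ref{lem:strong_error1}) for the centred part, and the weak error (Lemma \ref{lem:weak_error1}) for $m^l-m^{l-1}$. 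Where you genuinely depart from the paper is in (i). The paper bounds the integrand pointwise in $s$, asserting that the one-step increments $m^{N,l}_{\tau^l_s}-m^{N,l}_{\tau^{l-1}_s}$ and $\overline{m}^{N,l}_{\tau^l_s}-\overline{m}^{N,l}_{\tau^{l-1}_s}$ are of order $\Delta_l$ and $\Delta_l/\sqrt{N}$ respectively (citing the strong error and the $T_3$ computation of Lemma \ref{lem:R1}), and then integrates via the generalized Minkowski inequality. You instead observe that these one-step increments carry Brownian and observation increments and so are only $\mathcal{O}(\Delta_l^{1/2})$ and $\mathcal{O}(\Delta_l^{1/2}/\sqrt{N})$ pointwise, and you recover the full $\Delta_l/\sqrt{N}$ by isolating the conditionally centred part of each increment and summing it with a Burkholder--Davis--Gundy square-function estimate in the time variable, the predictable remainder being genuinely $\mathcal{O}(\Delta_l/\sqrt{N})$ per step. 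This is a real difference: your argument costs more to set up (you must fix the correct discrete filtration and check that the cross-variation of the fresh particle and observation noises is null or $\mathcal{O}(\Delta_l/N)$), but it is self-contained and does not rest on the paper's rather terse pointwise claim, which as stated appeals to the fixed-time strong error for what is really a time-increment bound. Either route delivers \eqref{eq:R2}; yours buys a cleaner and more defensible treatment of the $T_1$-type term at the price of an extra martingale decomposition.
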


\begin{proof}
Again we let $\tau^s_t = \lfloor \frac{t}{\Delta_s} \rfloor \Delta_s $ for $t \in \mathbb{R}^+$, and using the generalized Minkowski inequality
\begin{eqnarray*}
&&\mathbb{E}\Big[\Big| \Big(R_{t+k_1\Delta_l}^{l} -R_{t+k_1\Delta_l}^{l-1}\Big) \Big|^q\Big]^{1/q} \\  &=&  \Big(\mathbb{E}\Big[\Big| \sum_{j_1=1}^{d_x}\sum_{j_2=1}^{d_x} \frac{S(j_1,j_2)}{2}  \sum^{t\Delta^{-1}_{l-1}+k_1-1}_{k=0}  \Big(m^{N,l}_{k \Delta_{l-1}}(j_1)m^{N,l}_{k \Delta_{l-1}}(j_2) - m^l_{k \Delta_{l-1}}(j_1)m^l_{k \Delta_{l-1}}(j_2)\Big)
\\ &-&   \sum^{t\Delta^{-1}_{l-1}+k_1-1}_{k=0}  \ \Big(m^{N,l-1}_{k \Delta_{l-1}}(j_1)m^{N,l-1}_{k \Delta_{l-1}}(j_2) - m^l_{k \Delta_{l-1}}(j_1)m^l_{k \Delta_{l-1}}(j_2)\Big)\Big|^q\Big]^{1/q} \Big) \\
&=& \sum_{j_1=1}^{d_x}\sum_{j_2=1}^{d_x} \frac{S(j_1,j_2)}{2}  \Big(\mathbb{E}\Big[\Big| \int^{t+k_1\Delta_{l-1}}_0 \Big(m^{N,l}_{\tau^l_s}(j_1)m^{N,l}_{\tau^l_s}(j_2) - m^l_{\tau^{l}_s}(j_1)m^l_{\tau^{l}_s}(j_2)\Big)  \\
&-& \Big(m^{N,l}_{\tau^{l-1}_s}(j_1)m^{N,l}_{\tau^{l-1}_s}(j_2) - m^l_{\tau^{l-1}_s}(j_1)m^l_{\tau^{l-1}_s}(j_2)\Big) \\&+& \Big(m^{N,l}_{\tau^{l-1}_s}(j_1)m^{N,l}_{\tau^{l-1}_s}(j_2) - m^l_{\tau^{l-1}_s}(j_1)m^l_{\tau^{l-1}_s}(j_2)\Big) \\
&-&   \Big(m^{N,l-1}_{\tau^{l-1}_s}(j_1)m^{N,l-1}_{\tau^{l-1}_s}(j_2) - m^l_{\tau^{l-1}_s}(j_1)m^l_{\tau^{l-1}_s}(j_2)\Big)  ds \Big|^q\Big]^{1/q} \Big) \\
&\leq&  \mathsf{C}  \sum_{j_1=1}^{d_x}\sum_{j_2=1}^{d_x}  \Big(\mathbb{E}\Big[\Big| \int^{t+k_1\Delta_{l-1}}_0 \Big(m^{N,l}_{\tau^l_s}(j_1)m^{N,l}_{\tau^l_s}(j_2) - m^l_{\tau^{l}_s}(j_1)m^l_{\tau^{l}_s}(j_2)\Big) \\
&-&\Big(m^{N,l}_{\tau^{l-1}_s}(j_1)m^{N,l}_{\tau^{l-1}_s}(j_2) - m^l_{\tau^{l-1}_s}(j_1)m^l_{\tau^{l-1}_s}(j_2)\Big) ds \Big|^q\Big]^{1/q} \Big) \\
&+& \mathsf{C} \sum_{j_1=1}^{d_x}\sum_{j_2=1}^{d_x}  \Big(\mathbb{E}\Big[\Big| \Big(m^{N,l}_{\tau^{l-1}_s}(j_1)m^{N,l}_{\tau^{l-1}_s}(j_2) - m^l_{\tau^{l-1}_s}(j_1)m^l_{\tau^{l-1}_s}(j_2)\Big) \\
&-& \Big(m^{N,l-1}_{\tau^{l-1}_s}(j_1)m^{N,l-1}_{\tau^{l-1}_s}(j_2) - m^l_{\tau^{l-1}_s}(j_1)m^l_{\tau^{l-1}_s}(j_2)\Big)  ds \Big|^q\Big]^{1/q} \Big) \\
&=:& T_1 + T_2.
\end{eqnarray*}
For $T_1$ we make use of \eqref{eq:md}
 and with the generalized Minkowski inequality
\begin{eqnarray*}
T_1 &=& \mathsf{C}  \sum_{j_1=1}^{d_x}\sum_{j_2=1}^{d_x}  \Big(\mathbb{E}\Big[\Big| \int^{t+k_1\Delta_{l-1}}_0 \Big(m^{N,l}_{\tau^l_s}(j_1)m^{N,l}_{\tau^l_s}(j_2) - m_{\tau^{l}_s}(j_1)m_{\tau^{l}_s}(j_2)\Big) \\
&-&\Big(m^{N,l}_{\tau^{l-1}_s}(j_1)m^{N,l}_{\tau^{l-1}_s}(j_2) - m_{\tau^{l-1}_s}(j_1)m_{\tau^{l-1}_s}(j_2)\Big) ds \Big|^q\Big]^{1/q} \Big) \\
 &=&  \mathsf{C} \sum_{j_1=1}^{d_x}\sum_{j_2=1}^{d_x}   \Big(\mathbb{E}\Big[\Big| \int^{t+k_1\Delta_{l-1}}_0 \Big(\overline{m}^{N,l}_{\tau^l_s}(j_1)m^{N,l}_{\tau^l_s}(j_2) 
 -\overline{m}^{N,l}_{\tau^{l-1}_s}(j_1)m^{N,l}_{\tau^{l-1}_s}(j_2) \Big) \\
 &+& \Big( m_{\tau^l_s}(j_1)\overline{m}^{N,l}_{\tau^l_s}(j_2) -m_{\tau^{l-1}_s}(j_1)\overline{m}^{N,l}_{\tau^{l-1}_s}(j_2) \Big)  ds \Big|^q\Big]^{1/q} \Big)  \\
 & \leq & \mathsf{C} \sum_{j_1=1}^{d_x}\sum_{j_2=1}^{d_x}   \mathbb{E}\Big[\Big| \int^{t+k_1\Delta_{l-1}}_0 \overline{m}^{N,l}_{\tau^l_s}(j_1)m^{N,l}_{\tau^l_s}(j_2) 
 -\overline{m}^{N,l}_{\tau^{l-1}_s}(j_1)m^{N,l}_{\tau^{l-1}_s}(j_2) ds \Big|^q\Big]^{1/q} \\
 &+&  \sum_{j_1=1}^{d_x}\sum_{j_2=1}^{d_x}   \mathbb{E}\Big[\Big| \int^{t+k_1\Delta_{l-1}}_0
  m^l_{\tau^l_s}(j_1)\overline{m}^{N,l}_{\tau^l_s}(j_2) -m^l_{\tau^{l-1}_s}(j_1)\overline{m}^{N,l}_{\tau^{l-1}_s}(j_2) ds \Big|^q \Big]^{1/q} \\
  &=:& T_3 + T_4.
\end{eqnarray*}
For $T_3$ we can use the difference of mean trick, as in \eqref{eq:bias_mean},
$$
T_3 =  \mathsf{C}
 \sum_{j_1=1}^{d_x}\sum_{j_2=1}^{d_x}   \mathbb{E}\Big[\Big| \int^{t+k_1\Delta_{l-1}}_0
\Big(\overline{m}^{N,l}_{\tau^l_s}(j_1) - \overline{m}^{N,l}_{\tau^{l-1}_s}(j_1) \Big) m^{N,l}_{\tau^l_s}(j_2) 
+ \overline{m}^{N,l}_{\tau^{l-1}_s}(j_1) \Big( {m}^{N,l}_{\tau^l_s}(j_2) - {m}^{N,l}_{\tau^{l-1}_s}(j_2) \Big)ds \Big|^q \Big]^{1/q}.
$$
We know $\mathbb{E}[|m^{N,l}_{\tau^l_s}|^q] \leq \mathsf{C}$, and $\overline{m}^{N,l}_{\tau^{l-1}_s}(j_1)$ is of oder $\mathcal{O}(N^{-\frac{1}{2}})$. The last bracket term is of order $\mathcal{O}(\Delta_l)$, arising from the strong error.  The first bracket term is the same, that appears in \autoref{lem:R1}, which is of order $\mathcal{O}(\frac{\Delta_l}{\sqrt{N}})$.
\\\\
Similarly, $T_4$ can be expressed as
$$
T_4 =  \mathsf{C}
 \sum_{j_1=1}^{d_x}\sum_{j_2=1}^{d_x}   \mathbb{E}\Big[\Big| \int^{t+k_1\Delta_{l-1}}_0
\Big(\overline{m}^{N,l}_{\tau^l_s}(j_2) - \overline{m}^{N,l}_{\tau^{l-1}_s}(j_2) \Big) m^{l}_{\tau^l_s}(j_1) 
+ \overline{m}^{N,l}_{\tau^{l-1}_s}(j_2) \Big( {m}^{l}_{\tau^l_s}(j_1) - {m}^{l}_{\tau^{l-1}_s}(j_1) \Big)ds \Big|^q \Big]^{1/q},
$$
which contains the same bounds as $T_3$, therefore implying that
$$
T_1 \leq \frac{\mathsf{C} \Delta_l}{\sqrt{N}}.
$$

For  $T_2$, we can rewrite it as
\begin{eqnarray*}
T_2 &=&\mathsf{C} \sum_{j_1=1}^{d_x}\sum_{j_2=1}^{d_x}\Big( \mathbb{E} \Big[ \Big| \int^{t+k_1\Delta_{l-1}}_0 \Big(\frac{1}{N} \sum^N_{i=1} {\zeta}^{i,l}_{\tau^{l-1}_s}(j_1)\Big)\Big(\frac{1}{N} \sum^N_{i=1} {\zeta}^{i,l}_{\tau^{l-1}_s}(j_2)\Big)
\\&-& \Big(\frac{1}{N} \sum^N_{i=1} {\zeta}^{i,l-1}_{\tau^{l-1}_s}(j_1)\Big)\Big(\frac{1}{N} \sum^N_{i=1} {\zeta}^{i,l-1}_{\tau^{l-1}_s}(j_2)\Big) \\&-& \mathbb{E}[ {\zeta}^l_{\tau^{l-1}_s}(j_1)]\mathbb{E}[ {\zeta}^l_{\tau^{l-1}_s}(j_2)]
 + \mathbb{E}[ {\zeta}^{l-1}_{\tau^{l-1}_s}(j_1)]\mathbb{E}[ {\zeta}^{l-1}_{\tau^{l-1}_s}(j_2)] ds \Big|^q \Big]^{1/q}\Big).
\end{eqnarray*}

Now to proceed, we again use $\overline{m}^{N,s}_{\tau^{l-1}_s} = {m}^{N,s}_{\tau^{l-1}_s} - m_{\tau^{l-1}_s}$, for $s \in \{l-1,l\}$,
which we can rewrite, in terms of its expectation, as
$$
\frac{1}{N}\sum^N_{i=1}\overline{\zeta}^{i,s}_{\tau^{l-1}_s}(j) = \frac{1}{N}\sum^N_{i=1}{{\zeta}}^{i,s}_{\tau^{l-1}_s}(j)  - \mathbb{E}[ {\zeta}^s_{\tau^{l-1}_s}(j)].
$$
Therefore the integral terms of $T_2$,  for $s \in \{l-1,l\}$, become
\begin{align*}
 &\Big(\frac{1}{N} \sum^N_{i=1} {\zeta}^{i,s}_{\tau^{l-1}_s}(j_1)\Big)\Big(\frac{1}{N} \sum^N_{i=1} {\zeta}^{i,s}_{\tau^{l-1}_s}(j_2)\Big) - \mathbb{E}[ {\zeta}^s_{\tau^{l-1}_s}(j_1)]\mathbb{E}[ {\zeta}^s_{\tau^{l-1}_s}(j_2)]
 \\ &= \Big(\frac{1}{N} \sum^N_{i=1}\overline{ {\zeta}}^{i,s}_{\tau^{l-1}_s}(j_1) +  \mathbb{E}[ {\zeta}^s_{\tau^{l-1}_s}(j_1)] \Big)\Big(\frac{1}{N} \sum^N_{i=1}\overline{\zeta}^{i,s}_{\tau^{l-1}_s}(j_2)  +  \mathbb{E}[ {\zeta}^s_{\tau^{l-1}_s}(j_2)] \Big) -\mathbb{E}[ {\zeta}^s_{\tau^{l-1}_s}(j_1)]\mathbb{E}[ {\zeta}^s_{\tau^{l-1}_s}(j_2)] \\
 &= \Big(\frac{1}{N} \sum^N_{i=1}\overline{\zeta}^{i,s}_{\tau^{l-1}_s}(j_1)\Big)\Big(\frac{1}{N} \sum^N_{i=1}\overline{\zeta}^{i,s}_{\tau^{l-1}_s}(j_2)\Big) + \Big(\frac{1}{N} \sum^N_{i=1}\overline{\zeta}_{\tau^{l-1}_s}^{i,s}(j_1)\Big)\mathbb{E}[ {\zeta}^s_{\tau^{l-1}_s}(j_2)] 
 +  \Big(\frac{1}{N} \sum^N_{i=1}\overline{\zeta}^{i,s}_{\tau^{l-1}_s}(j_2)\Big)\mathbb{E}[ {\zeta}^s_{\tau^{l-1}_s}(j_1)].
\end{align*}
By substituting this into $T_2$, we have

$$
T_2 = \mathsf{C}\sum_{j_1=1}^{d_x}\sum_{j_2=1}^{d_x} \Big( \mathbb{E} \Big[ \Big| \int^{t+k_1\Delta_{l-1}}_0
T_5+T_6+T_7 \ ds \Big|^q \Big]^{1/q} \Big), 
$$
where
\begin{eqnarray*}
T_5&=& \Big(\frac{1}{N} \sum^N_{i=1}\overline{\zeta}^{i,l}_{\tau^{l-1}_s}(j_1)\Big)\Big(\frac{1}{N} \sum^N_{i=1}\overline{\zeta}^{i,l}_{\tau^{l-1}_s}(j_2)\Big) - 
\Big(\frac{1}{N} \sum^N_{i=1}\overline{\zeta}^{i,l-1}_{\tau^{l-1}_s}(j_1)\Big)\Big(\frac{1}{N} \sum^N_{i=1}\overline{\zeta}^{i,l-1}_{\tau^{l-1}_s}(j_2)\Big),\\
T_6&=&\Big(\frac{1}{N} \sum^N_{i=1}\overline{\zeta}^{i,l}_{\tau^{l-1}_s}(j_1)\Big)\mathbb{E}[ {\zeta}^l_{\tau^{l-1}_s}(j_2)] -  \Big(\frac{1}{N} \sum^N_{i=1}\overline{\zeta}^{i,l-1}_{\tau^{l-1}_s}(j_1)\Big)\mathbb{E}[ {\zeta}^{l-1}_{\tau^{l-1}_s}(j_2)], \\
T_7 &=& \Big(\frac{1}{N} \sum^N_{i=1}\overline{\zeta}^{i,l}_{\tau^{l-1}_s}(j_2)\Big)\mathbb{E}[ {\zeta}^l_{\tau^{l-1}_s}(j_1)] -  \Big(\frac{1}{N} \sum^N_{i=1}\overline{\zeta}^{i,l-1}_{\tau^{l-1}_s}(j_2)\Big)\mathbb{E}[ {\zeta}^{l-1}_{\tau^{l-1}_s}(j_1)].
\end{eqnarray*}
For $T_6$ and $T_{7}$, they can be expressed as
\begin{eqnarray*}
T_6+T_{7}&=& \frac{1}{N}\sum^N_{i=1}\overline{\zeta}^{i,l}_{\tau^{l-1}_s}(j_1) \Big(\mathbb{E}[ {\zeta}^l_{\tau^{l-1}_s}(j_2)] - \mathbb{E}[ {\zeta}^{l-1}_{\tau^{l-1}_s}(j_2)]\Big) + \mathbb{E}[ {\zeta}^{l-1}_{\tau^{l-1}_s}(j_2)]\frac{1}{N}\sum^n_{i=1}\Big(\overline{\zeta}^{i,l}_{\tau^{l-1}_s}(j_1)
 - \overline{\zeta}_{\tau^{l-1}_s}^{i,l-1}(j_1)\Big) \\
&+& \frac{1}{N}\sum^N_{i=1}\overline{\zeta}_{\tau^{l-1}_s}^{i,l}(j_2) \Big(\mathbb{E}[ {\zeta}^l_{\tau^{l-1}_s}(j_1)] - \mathbb{E}[ {\zeta}^{l-1}_{\tau^{l-1}_s}(j_1)]\Big)  + \mathbb{E}[ {\zeta}^{l-1}_{\tau^{l-1}_s}(j_1)]\frac{1}{N}\sum^n_{i=1}\Big(\overline{\zeta}^{i,l}_{\tau^{l-1}_s}(j_2)
 - \overline{\zeta}^{i,l-1}(j_2)\Big).
\end{eqnarray*}
For $T_5$, we can rewrite it as
\begin{align*}
T_5 = \frac{1}{N}\sum^N_{i=1} \overline{\zeta}_{\tau^{l-1}_s}^{i,l}(j_2)\Big(\frac{1}{N} \sum^N_{i=1}\Big(\overline{\zeta}^{i,l}_{\tau^{l-1}_s}(j_1) - \overline{\zeta}^{i,l-1}_{\tau^{l-1}_s}(j_1)\Big) \Big) + \Big(\frac{1}{N}\sum^N_{i=1} \overline{\zeta}^{i,l}_{\tau^{l-1}_s}(j_1)\Big)
\frac{1}{N}\sum^N_{i=1}\Big(\overline{\zeta}^{i,l}_{\tau^{l-1}_s}(j_2) - \overline{\zeta}^{i,l-1}_{\tau^{l-1}_s}(j_2)\Big).
\end{align*}
Using these expressions, and through the generalized Minkowski and Jensen's inequality, $T_2$ can be simplified as
\begin{eqnarray*}
T_2 &\leq& \mathsf{C} \sum_{j_1=1}^{d_x}\sum_{j_2=1}^{d_x}\Big(\mathbb{E}\Big[ \Big| \int^{t+k_1\Delta_{l-1}}_0 \Big( \frac{1}{N}\sum^N_{i=1} \overline{\zeta}_{\tau^{l-1}_s}^{i,l}(j_2)\Big(\frac{1}{N} \sum^N_{i=1}\Big(\overline{\zeta}_{\tau^{l-1}_s}^{i,l}(j_1) - \overline{\zeta}_{\tau^{l-1}_s}^{i,l-1}(j_1)\Big) \Big)ds\Big|^p \Big]^{1/p}
\\ &+& \mathbb{E}\Big[ \Big|\int^{t+k_1\Delta_{l-1}}_0 \Big( \frac{1}{N}\sum^N_{i=1} \overline{\zeta}_{\tau^{l-1}_s}^{i,l}(j_1)\Big(\frac{1}{N} \sum^N_{i=1}\Big(\overline{\zeta}_{\tau^{l-1}_s}^{i,l}(j_2) - \overline{\zeta}_{\tau^{l-1}_s}^{i,l-1}(j_2)\Big) \Big)ds\Big|^p \Big]^{1/p} \\
&+& \Big|\mathbb{E}[\zeta_{\tau^{l-1}_s}^l(j_2)] - \mathbb{E}[\zeta_{\tau^{l-1}_s}^{l-1}(j_2)]\Big|\mathbb{E}\Big[\Big|\int^{t+k_1\Delta_{l-1}}_0\frac{1}{N}\sum^N_{i=1}\overline{\zeta}_{\tau^{l-1}_s}^{i,l}(j_1)ds\Big|^p \Big]^{1/p} \\
&+& \Big|\mathbb{E}[\zeta_{\tau^{l-1}_s}^{l-1}(j_2)] \Big|\mathbb{E} \Big[ \Big| \int^{t+k_1\Delta_{l-1}}_0\frac{1}{N}\sum^n_{i=1}\Big(\overline{\zeta}_{\tau^{l-1}_s}^{i,l}(j_1)  - \overline{\zeta}_{\tau^{l-1}_s}^{i,l-1}(j_1)\Big)ds \Big|^p \Big]^{1/p} \\
&+& \Big|\mathbb{E}[\zeta_{\tau^{l-1}_s}^l(j_1)] - \mathbb{E}[\zeta_{\tau^{l-1}_s}^{l-1}(j_1)]\Big|\mathbb{E}\Big[\Big| \int^{t+k_1\Delta_{l-1}}_0\frac{1}{N}\sum^N_{i=1}\overline{\zeta}_{\tau^{l-1}_s}^{i,l}(j_2)ds\Big|^p \Big]^{1/p}
\\&+& \Big|\mathbb{E}[\zeta_{\tau^{l-1}_s}^{l-1}(j_1)] \Big|\mathbb{E} \Big[ \Big| \int^{t+k_1\Delta_{l-1}}_0 \frac{1}{N}\sum^n_{i=1}\Big(\overline{\zeta}_{\tau^{l-1}_s}^{i,l}(j_2)  - \overline{\zeta}_{\tau^{l-1}_s}^{i,l-1}(j_2)\Big)ds \Big|^p \Big]^{1/p} \Big).
\end{eqnarray*}
Then by using the results from the strong and weak error of the diffusion processes, in \autoref{lem:strong_error1} and \ref{lem:weak_error1}, we
reach the following bound of
$$
\mathbb{E}\Big[\Big| \Big(R_{t+k_1\Delta_{l-1}}^{l} -R_{t+k_1\Delta_{l-1}}^{l-1}\Big) \Big|^q\Big]^{1/q} \leq \frac{\mathsf{C}\Delta_l}{\sqrt{N}}.
$$
\end{proof}
Therefore by combining all the results from \autoref{lem:R1} - \autoref{lem:R2}, leads to the desired result of  \eqref{eq:mul} in \autoref{prop:var_term2}.





\subsection{Proof of Theorem \ref{theo:main}}
\begin{proof}
Noting \eqref{eq:main_est} one has
$$
[\widehat{\overline{U}}_t^{ML}-{\overline{U}}_t](Y) = [\widehat{\overline{U}}_t^{N_0,0}-{\overline{U}}_t^0](Y) + \sum_{l=1}^L [\widehat{\overline{U}}_t^{N_l,l} -\widehat{\overline{U}}_t^{N_l,l-1}-{\overline{U}}_t^{l} +{\overline{U}}_t^{l-1}](Y) + [{\overline{U}}_t^L-{\overline{U}}_t](Y).
$$
Thus, by using three applications of the $C_2-$inequality we have
\begin{eqnarray*}
\mathbb{E}\Big[\Big\|[\widehat{\overline{U}}_t^{ML}-{\overline{U}}_t](Y)\Big\|_2^2\Big] &\leq& \mathsf{C}\Big(
\mathbb{E}\Big[\Big\|[\widehat{\overline{U}}_t^{N_0,0}-{\overline{U}}_t^0](Y)\Big\|_2^2\Big] \\ &+& \mathbb{E}\Big[\Big\|\sum_{l=1}^L [\widehat{\overline{U}}_t^{N_l,l} -\widehat{\overline{U}}_t^{N_l,l-1}-{\overline{U}}_t^{l} +{\overline{U}}_t^{l-1}](Y)\Big\|_2^2\Big]
+\mathbb{E}\Big[\Big\|[{\overline{U}}_t^L-{\overline{U}}_t](Y)\Big\|_2^2\Big]
\Big).
\end{eqnarray*}
For the first term on the R.H.S.~one can use \eqref{eq:first_prop_2} and for the last term on the R.H.S., we can use ~\eqref{eq:main_bias}. For the middle term, one has
\begin{eqnarray*}
\Big\|\sum_{l=1}^L [\widehat{\overline{U}}_t^{N_l,l} -\widehat{\overline{U}}_t^{N_l,l-1}-{\overline{U}}_t^{l} +{\overline{U}}_t^{l-1}](Y)\Big\|_2^2  &=& 
\sum_{l=1}^L\sum_{j=1}^{d_x}\Big([\widehat{\overline{U}}_t^{N_l,l} -\widehat{\overline{U}}_t^{N_l,l-1}-{\overline{U}}_t^{l} +{\overline{U}}_t^{l-1}](Y)^2\Big)(j)^2
\hspace{-1cm} \\ &+&  \sum_{l=1}^L\sum_{q=1}^L\mathbb{I}_{D^c}(l,q)\sum_{j=1}^{d_x}\Big([\widehat{\overline{U}}_t^{N_l,l} -\widehat{\overline{U}}_t^{N_l,l-1}-{\overline{U}}_t^{l} +{\overline{U}}_t^{l-1}](Y)\Big)(j) \times \\ && \Big([\widehat{\overline{U}}_t^{N_q,q} -\widehat{\overline{U}}_t^{N_q,q-1}-{\overline{U}}_t^{q} +{\overline{U}}_t^{q-1}](Y)\Big)(j).
\end{eqnarray*}
Then using a combination of the independence of the coupled particle systems along with \autoref{prop:var_term2} the proof can be concluded.

\end{proof}

\section*{Acknowledgments}
This work was supported by KAUST baseline funding.


\begin{thebibliography}{99}


\bibitem{AM90}
A. Arapostathis and S. I. Marcus. 
Analysis of an identification algorithm arising in the adaptive estimation of Markov chains. 
\newblock{\em Math Control Signals Syst} 3:1--29, 1990.


\bibitem{BC09}
A. Bain and D. Crisan.
\newblock{\em Fundamentals of Stochastic Filtering}.
\newblock{Springer, New York}, 2009.


\bibitem{BCJKR20}
A. Beskos, D. Crisan, A. Jasra, N. Kantas and H. Ruzayqat. 
Score-based parameter estimation for a class of continuous-time state space models. 
\newblock{\em SIAM J. Sci. Comp. {\bf 43}(4), A2555-A2580}, 2021.


\bibitem{RB10}
R. Bhar.
\newblock{\em Stochastic Filtering with Applications in Finance}.
\newblock{World Scientific}, 2010.


\bibitem{BD20}
A. N. Bishop and P. Del Moral.
\newblock{On the mathematical theory of ensemble (linear-gaussian) Kalman--Bucy filtering.}
arXiv preprint arXiv:2006.08843, 2020.


\bibitem{BD17}
A. N. Bishop and P. Del Moral. 
On the stability of  Kalman--Bucy diffusion processes.
{\em SIAM J. Control Optim}., 55(6), 4015-4047, 2017.


\bibitem{BDM18}
N. Brosse, A. Durmus and E. Moulines.
Normalizing constants of log-concave densities.
\newblock{\em Elect. J. Stat.,}12, 851--889, 2018.


\bibitem{CDG11}
 F. Cerou, P. Del Moral and A. Guyader.
A non-asymptotic variance theorem for unnormalized Feynman--Kac particle models. 
\newblock{\em Ann. Inst. Henri Poincare}, 4, 629--649, 2011.

\bibitem{CFJ20}
N. K. Chada, J. Franks, A. Jasra, K. J. H. Law and M. Vihola.
Unbiased inference for discretely observed hidden Markov model diffusions. 
\newblock{\em SIAM/ASA J. Uncertainty Quantification}, 9(2), 763--787, 2021.

\bibitem{CJY20}
N. K. Chada, A. Jasra and F. Yu.
\newblock{Multilevel ensemble Kalman--Bucy filters.}
\newblock{\em SIAM/ASA J. Uncertainty Quantification}, (to appear), 2022.

\bibitem{CHL20}
A. Chernov, H. H{o}el, K. J. H. Law, F. Nobile and R. Tempone.
\newblock{Multilevel ensemble Kalman filtering for spatio-temporal processes}.
\newblock{\em Numer. Math.}, 1--55, 2021.

\bibitem{CDJ21}
D. Crisan, P. Del Moral, A. Jasra and H. Ruzayqat. 
Log-normalization constant estimation using the ensemble Kalman-Bucy filter with application to high-dimensional models.
\newblock{\em Adv. Appl. Probab.}, (to appear), 2022.


\bibitem{CR11}
D. Crisan and B. Rozovskii.
\newblock{\em The Oxford Handbook of Nonlinear Filtering}.
Oxford University Press, Oxford, 2011.


\bibitem{DJLZ17}
P. Del Moral, A. Jasra, K. Law, and Y. Zhou. 
Multilevel sequential Monte Carlo samplers for normalizing constants.
\newblock{\em TOMACS}, 27, 2017.


\bibitem{DDS10}
P. Del Moral, A. Doucet and S. S. Singh.
Forward smoothing using sequential Monte Carlo.
arXiv preprint arxiv:1012.5390, 2010.


\bibitem{PD04}
P. Del Moral. 
 {\em Feynman-Kac Formulae}.
{Springer-Verlag}, New York, 2004.


\bibitem{DT18}
P Del Moral, and J. Tugaut.
On the stability and the uniform propagation of chaos properties of ensemble Kalman--Bucy filters.
{\em Ann. Appl. Probab.}, {28}, 790--850, 2018.

\bibitem{GE09}
G. Evensen. 
Data Assimilation: The Ensemble Kalman Filter.
Springer Science \& Business Media, 2nd edition, 2009.


\bibitem{FMS20}
K. Fossum, T. Mannseth and A. S. Stordal.
\newblock{Assessment of multilevel ensemble-based data assimilation for reservoir history matching.}
\newblock{\em Computational Geosciences}, 24, 217--239, 2020.


\bibitem{MBG08}
M. B. Giles.
Multilevel Monte Carlo path simulation. 
\newblock{\em Op. Res.}, 56, 607--617, 2008.


\bibitem{MBG15}
M. B. Giles.
\newblock{Multilevel Monte Carlo methods.}
\newblock{\em  Acta Numerica}, 24, 259--328, 2015.


\bibitem{GM98}
 A. Gelman and X.-L. Meng. 
 Simulating normalizing constants: From importance sampling to bridge sampling to path sampling. 
 \newblock{\em Statistical science}, 163--185, 1998.



\bibitem{HLT16}
H. H{o}el, K. J. H. Law and R. Tempone.
\newblock{Multilevel ensemble Kalman filtering}.
\newblock{\em SIAM J. Numer. Anal.}, 54(3), 1813--1839, 2016.


\bibitem{HST21}
H. H{o}el, G. Shaimerdenova and R. Tempone.
Multi-index ensemble Kalman filtering
\newblock{arXiv preprint arXiv:2104.07263}, 2021.



\bibitem{JKL17}	
A. Jasra, K. Kamatani, K. J. H. Law and Y. Zhou.
\newblock{Multilevel particle filters}.
\newblock{\em SIAM J. Numer. Anal.}, 55(6), 3068--3096, 2017.



\bibitem{JKO18}
A. Jasra, K. Kamatani, P. P. Osei and Y. Zhou.
Multilevel particle filters: normalizing constant estimation.
\newblock{\em Stat Comput}, 28:47--60, 2018.


\bibitem{AJ70}
A. Jazwinski.
\newblock{\em Stochastic processes and filtering theory},
\newblock{vol.x 63. Academic Pr}, 1970.


\bibitem{KW17}
S. Kostov and N. Whiteley.
An algorithm for approximating the second moment of the normalizing constant estimate from a particle filter.
\newblock{\em Methodol Comput Appl Probab}, 19:799--818, 2017.


\bibitem{LM97}
F. Le Gland and M. Mevel.
 Recursive identification in hidden Markov models. 
 {\em Proc. 36th IEEE Conf. Dec. Contr.}, 3468--3473, 1997.


\bibitem{ENL63}
E. N. Lorenz.
\newblock{Deterministic nonperiodic flow.}
\newblock{\em Journal of the Atmospheric Sciences}, 20(2): 130--141, 1963.


\bibitem{ENL96}
E. N. Lorenz.
\newblock{Predictability: A problem partly solved}. 
\newblock{\em Proc. ECMWF Seminar on predictability}, 1, 1--18, 1996.
 

\bibitem{MW06} 
A. Majda and X. Wang. 
\newblock{\em Non-linear Dynamics and Statistical Theories for Basic Geophysical Flows,} Cambridge University Press, 
\newblock 2006.


\bibitem{ORL08} 
D. Oliver, A. C. Reynolds and N. Liu. 
\newblock{ \em Inverse Theory for Petroleum Reservoir Characterization and History Matching}, Cambridge University Press, 1st edn, 
\newblock 2008.

\bibitem{SR19}
S. Reich.
Data assimilation: the Schr\"{o}dinger perspective.
\newblock{\em Acta Numerica},  28, 635--711, 2019.


\bibitem{PDS11}
G. Poyiadjis, A. Doucet and S. S. Singh.
 Particle approximations of the score and observed information matrix in state space models with application to parameter estimation. 
 {\em Biometrika} 98: 65--80, 2011.

\bibitem{SO08}
P. Sakov and P.R. Oke. 
A deterministic formulation of the ensemble Kalman filter: an alternative to ensemble square root filters. 
\newblock{\em Tellus A.}, 60(2), 361--371, 2008.

\bibitem{JCS03}
J. C. Spall. 
{\em Introduction to stochastic search and optimization}, 
1st edn. Wiley, New York, 2003.


\bibitem{JCS92}
J. C. Spall.
Multivariate stochastic approximation using a simultaneous perturbation gradient approximation. 
\newblock{\em IEEE Trans. Auto. Control.}, 37, 332--341, 1992.


\bibitem{RJP18}
M. Rischard, P. E. Jacob and N. Pillai.
Unbiased estimation of log normalizing constants with applications to Bayesian cross-validation.
arXiv preprint arxiv:1810.01382, 2018.


\bibitem{MV18}
M. Vihola. 
Unbiased estimators and multilevel Monte Carlo. 
\newblock{\em Op. Res.}, 66, 448--462, 2018.









\end{thebibliography}
\end{document}